\documentclass{amsart}
\usepackage[utf8]{inputenc}
\usepackage{amsmath}
\usepackage{amsfonts}
\usepackage{amssymb, amsthm}
\usepackage{stmaryrd}
\usepackage[english]{babel}
\usepackage{graphicx} 
\usepackage[T1]{fontenc}
\usepackage{xcolor}
\usepackage{fancyhdr}
\usepackage{eucal}
\usepackage[all]{xy}
\usepackage{longtable}

\newtheorem{thm}{Theorem}[section]

\newtheorem{lm}[thm]{Lemma}
\newtheorem{cor}[thm]{Corollary}
\newtheorem{prop}[thm]{Proposition}

\newtheorem{defn}[thm]{Definition}

\newtheorem*{notations}{Notations}
\newtheorem*{remarque}{Remark}

\def\R{\mathbb{R}}
\def\Q{\mathbb{Q}}

\def\K{\mathbb{K}}
\def\C{\mathbb{C}}
\def\F{\mathbb{F}}
\def\N{\mathbb{N}}
\def\L{\mathbb{L}}

\def\O{\mathfrak{O}}

\def\house#1{{%
    \setbox0=\hbox{$#1$}
    \vrule height \dimexpr\ht0+1.4pt width .4pt depth \dp0\relax
    \vrule height \dimexpr\ht0+1.4pt width \dimexpr\wd0+2pt depth \dimexpr-\ht0-1pt\relax
    \llap{$#1$\kern1pt}
    \vrule height \dimexpr\ht0+1.4pt width .4pt depth \dp0\relax
}}

\def\degtr{\mathrm{tr.deg}}
\def\Log{\mathrm{Log}}
\def\Hom{\mathrm{Hom}}
\def\GL{\mathrm{GL}}
\def\Norm{\mathrm{Norm}}

\begin{document}

\title{Mahler's method and Carlitz logarithms}
\author{Guillaume Estienne}
\address{Universite Caen Normandie, CNRS, Normandie Univ, LMNO UMR6139, F-14000 CAEN, FRANCE }
\email{guillaume.estienne@unicaen.fr}

\begin{abstract}
In 2007, Papanikolas \cite{Papanikolas_2007} established that if Carlitz logarithms of algebraic functions are linearly independent over the rational function field, then they are algebraically independent.
The purpose of the present paper is to provide a new proof of this theorem using Mahler’s method instead of the theory of $t$-motives.
We revisit and extend the approach developed by Denis in \cite{Denis_2006}, which enabled him in 2006 to prove this result in the particular case of the logarithm of elements in $F_q(\theta)$ via a Mahler system. 
\end{abstract}

\maketitle

\tableofcontents

\section{Introduction}
Throughout this article, we let $k = \F_q(\theta)$ denote the function field in the indeterminate $\theta$ over the finite field $\F_q$ with $q$ elements, where $q$ is a power of a prime number $p$. 
We denote by $v_\infty$ the $\frac{1}{\theta}$-adic valuation on $k$. We also let $\C_\infty$ be a completion of an algebraic closure of $k_\infty := \F_q(\!(\frac{1}{\theta})\!)$, and $\overline{k}$ be the algebraic closure of $k$ contained in $\C_\infty$.

For any element $z\in \C_\infty$, the Carlitz exponential is defined by:
$$\exp_C(z) = \sum_{k \geq 0} \frac{z^{q^k}}{D_k},$$
where $D_k = \prod_{i=1}^k (\theta^{q^i} - \theta)^{q^{k-i}}$. 
This function satisfies the functional equation $\exp_C(\theta z) = \theta \exp_C(z) + \exp_C(z)^q$.
Its kernel is a free $\F_q[\theta]$-module of rank 1, and we denote a generator by $\Tilde{\pi}$. Explicitly, we have:
$$\Tilde{\pi} = \theta\Tilde{\theta} \prod_{k=1}^\infty \left(1- \theta^{1-q^k}\right)^{-1}$$ 
where $\Tilde{\theta}$ denotes a fixed $q-1$-th rooth of $-\theta$.
The Carlitz exponential admits an inverse on the disk $v_\infty(z) > - \frac{q}{q-1}$ called the Carlitz logarithm, which is given by:
$$\Log_C(z) := \sum_{k\geq 0} (-1)^k \frac{z^{q^k}}{L_k}$$ 
where $L_k := \prod_{i=1}^k (\theta^{q^i} - \theta)$.

Here we are interested in an algebraic independence result for these logarithms obtained by M. Papanikolas \cite{Papanikolas_2007} in 2007:
\begin{thm}{(Papanikolas 2007.)}{\label{Papanikolas}}
   Let $\lambda_1,...,\lambda_n \in \C_\infty$ satisfy $\exp_C(\lambda_i) \in \overline{k}$ for $i \in [\![1,n]\!]$. 
   If the family $( \lambda_i)_{i=1}^n$ is $k$-linearly independent, then it is $\overline{k}$-algebraically independent.
\end{thm}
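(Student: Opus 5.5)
We follow Denis's strategy and attach to the logarithms a Mahler-type system in an auxiliary variable. The Mahler operator will be the Frobenius twist $\theta \mapsto \theta^q$, and the algebraic independence will be transferred from functions to values.

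\textbf{Reductions.} Replace each $\lambda_i$ by $a\lambda_i$ for a suitable $a\in\F_q[\theta]$, which changes neither linear nor algebraic independence. Using the functional equation $\exp_C(\theta z)=\theta\exp_C(z)+\exp_C(z)^q$, we may then assume $\alpha_i:=\exp_C(\lambda_i)$ lies in the disk of convergence of $\Log_C$, so that $\lambda_i=\Log_C(\alpha_i)+c_i\tilde\pi$ with $c_i\in k$. Since $\tilde\pi$ is itself such a logarithm, we may add it to the family, complete $(\lambda_i)$ to a basis of the $k$-span of $\tilde\pi,\lambda_1,\dots,\lambda_n$, and reduce to proving: if $\Log_C(\alpha_1),\dots,\Log_C(\alpha_n)$ are $k$-linearly independent with $\alpha_i\in\overline k$ small, then they are algebraically independent. (We include $\tilde\pi$ among them when it lies in the span.)

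\textbf{Mahler system.} Fix a finite extension $K$ of $k$ containing the $\alpha_i$. Consider the deformation in which $\theta$ is replaced by a variable $t$ (Anderson–Thakur style). For instance take
$$f_i(t)=\sum_{k\ge0}\frac{\alpha_i^{(k)}}{\prod_{j=1}^k (t-\theta^{q^j})},$$
where $\alpha^{(k)}$ denotes the $k$-th Frobenius twist on coefficients. Alternatively, following Denis, take series in a new variable $z$ such that specialization gives $\Log_C(\alpha_i)$. These satisfy inhomogeneous first-order equations $f_i^{(-1)}=(t-\theta)\,f_i^{(\dots)}+\alpha_i$, which constitute a linear Mahler-type system with algebraic coefficients. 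The values at the relevant point recover $\Log_C(\alpha_i)$, up to the nonzero factor related to $\tilde\pi$.

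The paper's next step will be to invoke a Philippon/Nishioka-type lifting theorem for Mahler systems, i.e. a version of the Adamczewski–Faverjon "purity" result in positive characteristic. Its content is that every algebraic relation among the values comes from a relation among the functions, provided the evaluation point is regular. We apply it to reduce to functional algebraic independence.

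\textbf{Functional independence (main obstacle).} We must show that $f_1,\dots,f_n$ (together with the function attached to $\tilde\pi$) are algebraically independent over $\overline k(t)$. For inhomogeneous order-one systems, a Kolchin/Ostrowski–Hardouin-type argument applies: algebraic dependence of solutions of $y^{\sigma}=ay+b_i$ forces a \emph{linear} relation $\sum c_i f_i\in\overline k(t)$ with $c_i$ constants, i.e. $c_i\in\F_q(t)$ fixed by the operator. Specializing such a relation gives a $k$-linear relation among the $\Log_C(\alpha_i)$ modulo $\tilde\pi$, contradicting the hypothesis.

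The delicate points are twofold. The first is handling the rational function $g$: it must satisfy $g^{\sigma}=ag+\sum c_i\alpha_i$, and we must show this forces $\sum c_i\Log_C(\alpha_i)\in k\tilde\pi$, by a pole and degree analysis of $g$. The second is verifying the regularity hypotheses at the evaluation point. I expect that degree analysis, together with the passage from constants in $\F_q(t)$ back to $k$, to be the hardest step.
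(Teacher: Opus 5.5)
You have the same overall architecture as the paper: interpolating functions, a lifting theorem, a Kolchin-type reduction to a linear relation, pole analysis, then adding and removing $\tilde\pi$. But there is a genuine gap. The central lifting step is applied to the wrong kind of system, and the Mahler system that would make it work is never constructed. Your series $f_i(t)=\sum_k \alpha_i^{q^k}/\prod_{j\le k}(t-\theta^{q^j})$ are Anderson--Thakur functions. They satisfy $f^{(-1)}=f/(t-\theta)+\alpha^{(-1)}$, where the twist acts on coefficients and leaves $t$ alone, and they are specialized at $t=\theta$. That is not a system $F(z^{d})=A(z)F(z)$ evaluated at an algebraic point of positive valuation. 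So neither Nishioka's theorem nor Theorem~\ref{Adam_Denis} applies to it. The lifting tool available in that setting is the ABP criterion (applied to symmetric powers to reach algebraic relations), which is precisely the $t$-motivic input the paper sets out to avoid.

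Your fallback, ``following Denis, take series in a new variable $z$'', is where the real work lies, and it is left empty. Denis's deformation $\prod_{i\le k}(z^{q^i}-\theta)$ only covers $\alpha_i\in\F_q(\theta^{1/e})$. The missing idea is to expand $1/\theta=\mathbf{l}(u)$ and $u_i/(\theta\tilde\theta)=\mathbf{l}_i(u)$ as power series with coefficients in $\F_{q^d}$, in a uniformizer $u$ of $K_\infty=\F_{q^d}(\!(u)\!)$. Then $\mathbf{l}(Z^{q^d})=\mathbf{l}(Z)^{q^d}$, so the substitution $Z\mapsto Z^{q^d}$ imitates the Frobenius twist. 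As a result, $G_i(Z)=\sum_k\mathbf{l}_i(Z)^{q^k}/\pi_k(Z)$ and $G_0(Z)=\prod_k(1-\theta\mathbf{l}(Z)^{q^k})^{-1}$ satisfy a genuine $q^d$-Mahler system. Its matrix entries are only algebraic over $k(Z)$, which is why the appendix has to extend Fernandes's theorem.

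Working in the Mahler setting also breaks your treatment of constants. The solutions in $\mathbf{u}_Z$ of $\eta(Z^{q^d})=\eta(Z)$ form the whole coefficient field $\mathbf{B}$ (the perfect closure of $\overline{\F_q}(\theta)$ when $\beta=1$), not something like $\F_q(t)$. So the Kolchin-type argument only yields an algebraic combination $\sum_i\eta_iG_i$ with $\eta_i\in\mathbf{B}$, possibly involving $G_0$. Evaluation at $u$ then gives only a $\mathbf{B}$-linear relation among the values. Descending to a $k$-linear relation is a separate, nontrivial step (Theorem~\ref{1.5}), and nothing in your plan plays this role:
\begin{itemize}
\item Take an algebraic combination $\sum_i\kappa_iG_i^{p^j}$ with $\kappa_i\in\overline{\F_q}[\theta]$, with $j$ minimal and then the degrees minimal.
\item The derivation $\partial/\partial\theta$ on coefficients forces $j=0$.
\item The twisted Frobenius $\sigma$ ($x\mapsto x^q$ on $\overline{\F_q}$, $\theta\mapsto\theta$, $Z\mapsto Z^q$) sends $G_i$ to $\pi_1(G_i-\mathbf{l}_i)$, and this forces $\kappa_i\in\F_q[\theta]$.
\end{itemize}

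A smaller slip is in your reductions. Multiplying $\lambda_i$ by $a\in\F_q[\theta]$ makes $\exp_C(a\lambda_i)$ larger, not smaller. To land in the disk of convergence of $\Log_C$ you must divide, i.e.\ pass to $\theta$-division points, as the paper does by repeatedly solving $\theta\alpha+\alpha^q=\beta$.
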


This result relies on the theory of $t$-motives, especially through the use of the ABP criterion \cite{Anderson_2004} established by G. Anderson, W. Brownawell ad M. Papanikolas in 2004. Our aim here is to provide a new proof of this theorem, using the Mahler's method.

In the general case, to study the algebraic relations among values of functions of arithmetic nature, the commonly employed method is to construct 'good' functions interpolating these values. One then studies the algebraic relations among these functions, which, under certain technical conditions, yield the set of relations among the values under consideration.
Three particular frameworks can be distinguished for this transition to functions:\\
• if our functions are solutions of differential systems (Siegel-Shidlovskii's theorem \cite{Shidlovskii+1989});\\
• if our functions are solutions of $\sigma$-difference systems (ABP theorem \cite{Anderson_2004});\\
• if our functions are solutions of a Mahler system (Nishioka's theorem \cite{Nishioka1990}).

We focus here on the case of Mahler systems. For a natural number $d \geq 2$, we say that a function $f(z) \in \overline{k}[\![z]\!]$ is a $d$-Mahler function if there exists $m\geq 1$ and polynomials $P_0(z),...,P_m(z) \in \overline{k}[z]$ where $P_0(z) \neq 0$ and $P_m(z) \neq 0$ such that
$$P_0(z)f(z) + P_1(z)f(z^d) + P_2(z)f(z^{d^2}) +.... + P_m(z)f(z^{d^m}) = 0.$$
Likewise, a column vector $F(z) = (f_1(z),...,f_n(z))^T$ of functions in $\overline{k}[\![z]\!]$ is said to satisfy a $d$-mahler system if there exists $A(z)$ an invertible matrix with coefficients in $\overline{k}(z)$ such that
$$F(z^d) = A(z) F(z).$$
If $f$ is a $d$-Mahler function, then the vector $F(z) = (f(z),...f(z^{d^{n-1}}))^T$ is a solution of a $d$-mahler system where $A(z)$ is a companion matrix, invertible because $P_0(z) \neq 0$.

The major breakthrough in Mahler’s method came in 1990 with Kumiko Nishioka’s remarkable result in the case of number fields: under certain technical hypotheses, the algebraic relations among the values of Mahler functions at an algebraic point arise solely from the algebraic relations among the functions themselves. A more recent proof of this theorem was given in 2023 by B. Adamczewski and C. Faverjon in \cite{Adamczewski_Faverjon} (cf. \cite{adamczewski2020mahlersmethodvariablesfinite} for Mahler's method in several variables). 
However, these results focus on fields of characteristic zero.

F. Pellarin, in his Bourbaki seminar \cite{seminaire_Bourbaki_Pellarin}, presents the principal transcendence results for the positive characteristic case, among which is the following:
in 2006, Laurent Denis \cite{Denis_2006} established, using a Mahler system, the equivalence between linear and algebraic independence independance of the Carlitz logarithm for elements of $\F_q(\theta^{1/e})$ within its disk of convergence, where $e$ is a positive integer.
His idea was based on a deformation of the denominators $L_k := \prod_{i=1}^k (\theta^{q^i} - \theta)$ into functions $$L_k(z) := \prod_{i=1}^k (z^{q^i} - \theta).$$
The resulting deformation of the logarithm then satisfies a Mahler system of a particular form, for which Laurent Denis established an analogue of Ku. Nishioka’s theorem in \cite{Denis_2000}.

Since then, G. Fernandes gave, in 2018, a general analogue to Ku. Nishioka's theorem for the positive characteristic case in \cite{Fernandes} building on the particular case handled by L. Denis :

\begin{thm}{(Fernandes 2018.)}{\label{Fernandes}} $\text{ }$
Let $\K \subset \overline{k}$ be a finite extension of $k$. Let $n \geq 1$,
$d \geq 2$ be two integers and $f_1(z), . . . , f_n(z) \in \K[\![z]\!]$ be $v_\infty$-adic analytic functions defined in a neighborhood of the origin satisfying the $d$-Mahler system:
$$\begin{pmatrix}
    f_1(z^d)\\ \vdots \\f_n(z^d)
\end{pmatrix} = A(z) \begin{pmatrix}
    f_1(z)\\ \vdots \\f_n(z)
\end{pmatrix}$$
where $A(z) \in \GL_n(\K(z))$.

Let $\alpha \in \overline{k}\backslash\{0\}$ be such that $v_\infty(\alpha) > 0$ and such that for all $k \in \N$, the matrix $A(\alpha^{d^k})$ is well defined and invertible. Then the following equality holds:
$$\degtr_{k}\big(f_1(\alpha),...,f_n(\alpha)\big) = \degtr_{k(z)}\big(f_1(z),...,f_n(z)\big).$$
\end{thm}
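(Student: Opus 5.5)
The statement is Fernandes' analogue of Nishioka's theorem in positive characteristic. I would prove it by the classical Mahler-method scheme: an auxiliary function, a zero estimate, and an arithmetic Liouville-type lower bound. The inequality $\degtr_k(f_i(\alpha)) \le \degtr_{k(z)}(f_i(z))$ is the easy direction. Choose a transcendence basis $f_1,\dots,f_t$ of $k(z)(f_1,\dots,f_n)$ over $k(z)$. Clearing denominators in the algebraic relations over $\K[z]$ and specializing at $z=\alpha^{d^k}$ for $k$ large gives the bound. Here we pass to a large $k$ using the system $F(\alpha^{d^k})=A_k(\alpha)F(\alpha)$, with $A_k(\alpha)=A(\alpha^{d^{k-1}})\cdots A(\alpha)$ invertible, so that the relevant leading coefficients do not vanish; this yields the same field up to a $\overline{k}$-linear change. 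So the content lies in the reverse inequality.

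For the hard direction I would follow Nishioka's approach, in the form later streamlined by Adamczewski--Faverjon, transposed to the non-archimedean field $\C_\infty$. Set $t=\degtr_{k(z)}(f_i(z))$ and suppose $\degtr_k(f_i(\alpha))<t$. As above, we may assume that $f_1,\dots,f_t$ are algebraically independent over $k(z)$, and that every $f_i$ is integral over $\K[z][f_1,\dots,f_t]$.

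\textbf{Step 1 (auxiliary function).} For a large parameter $N$, Siegel's lemma over $\K[z]$ (with heights in the function field $\K$) gives a nonzero polynomial $P\in\K[z][X_1,\dots,X_n]$ of degree $\le N$ in $X$ and degree $\le N^{1+1/t}$ in $z$, and of small height. It is chosen so that $E(z)=P(z,f(z))$ vanishes at $0$ to order $\ge cN^{1+1/t}$.

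\textbf{Step 2 (zero estimate).} The order is at most $CN^{1+1/t}$. I would obtain this from a Nishioka-type multiplicity estimate for Mahler systems, proved via Nesterenko's ideal-theoretic lemma. This step needs care in characteristic $p$, because of inseparability and the Frobenius: one must show that the ideals stable under $z\mapsto z^d$ behave correctly.

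\textbf{Step 3 (transport and upper bound).} From $F(z^{d^k})=A_k(z)F(z)$ we get
$$E(\alpha^{d^k})=P\big(\alpha^{d^k},A_k(\alpha)F(\alpha)\big),$$
which is a polynomial in $F(\alpha)$ with coefficients in $\overline{k}$. Their degree and height are controlled by $d^k$. Since $v_\infty(\alpha)>0$, the $v_\infty$-adic absolute value of this quantity is roughly $|\alpha|^{d^k\cdot\mathrm{ord}_0E}$, hence tiny but nonzero for all large $k$ by Step 2.

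\textbf{Step 4 (contradiction).} Suppose $\degtr$ drops at $\alpha$. Then work in the $k$-algebra generated by the values, and use an elimination / Philippon-type algebraic independence criterion in positive characteristic (the function-field analogue of Nesterenko's criterion). This gives a lower bound on $|E(\alpha^{d^k})|$ in terms of degrees and heights that contradicts Step 3 once $N$ and $k$ are tuned, with $k\approx\log N$ and $N$ large.

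I expect the main obstacle to be Step 2 together with the criterion in Step 4, both in positive characteristic. On the arithmetic side, the product formula on $\overline{k}$ keeps the Liouville estimates elementary. The zero estimate, by contrast, must avoid derivations, which fail in characteristic $p$. I would first try to prove it purely via $\sigma$-stable ideals, that is, Nishioka's argument without differential tools, handling purely inseparable extensions by passing to $z^{1/p}$ when needed.
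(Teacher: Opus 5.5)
Your outline has the right architecture (auxiliary polynomial, transport along $z\mapsto z^d$ through the system, ultrametric smallness at $\alpha^{d^t}$, a Philippon-type criterion). However, the hard inequality is not actually established, and the ingredient you lean on most is misstated and in fact unnecessary. Write $s$ for $\degtr_{k(z)}(f_1,\dots,f_n)$ (your $t$).

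As written, Step 2 is false. With $\deg_X P\le N$ and $\deg_z P\le N^{1+1/s}$ there are more than $N^{s+1+1/s}/s!$ free coefficients. Linear algebra alone therefore produces a $P$ with $\mathrm{ord}_0E\ge N^{s+1+1/s}/s!$, so no bound $CN^{1+1/s}$ can hold, and the parameters of Steps 1 and 2 do not fit together. Even with the standard choice ($\deg_z,\deg_X\le N$, order $\asymp N^{s+1}$), the characteristic-$p$ multiplicity estimate is left unproved, and you yourself flag it as the main obstacle.

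Step 4 is only a placeholder. Assuming the transcendence degree drops does not by itself give a Liouville-type lower bound for $|E(\alpha^{d^t})|$ once $s\ge 2$. You have to state the criterion and verify its hypotheses. It is applied with $N$ fixed (large in terms of the criterion's constant) and $t$ ranging over all of $\N$, not with $t\approx\log N$. Note also that non-vanishing of $E(\alpha^{d^t})$ for large $t$ only needs $E\neq 0$, not a zero estimate.

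The missing idea, which is what the paper's proof exploits, is that no zero estimate is needed in this ultrametric setting:
\begin{itemize}
\item Take $R_N$ in the $s$ independent functions with $\deg_z,\deg_X\le N$ and $n_N=\mathrm{ord}_0E_N\ge N^{s+1}/s!$, obtained by counting. Its height is irrelevant, since it is absorbed in a shift $c_0(N)$. Normalise the leading coefficient of $E_N$ to $1$.
\item The ultrametric inequality then gives the \emph{exact} value $|E_N(\alpha^{d^t})|_\infty=|\alpha|_\infty^{n_N d^t}$ for $t\ge c_0(N)$.
\item Use Philippon's criterion in the version with a lower bound, $-\rho_t\le\log|P_t(\omega)|\le-\epsilon_t$. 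Here $\epsilon_t$ and $\rho_t$ are both the same constant times $n_N d^{t+c_0(N)}$, and that constant does not depend on $N$.
\item Take $\delta_t=N$ and $\sigma_t\asymp N d^{t+c_0(N)}$; the latter comes from house and denominator bounds $\ll d^k$ for the entries of $A(\alpha^{d^k})$. The key condition $\epsilon_t^{s+1}\ge c\,\delta_t^{s-1}(\epsilon_{t+1}^s+\rho_{t+1}^s)(\delta_t+\sigma_t)$ then reduces to $n_N\gg_c N^{s}$.
\end{itemize}
This last inequality holds for $N$ large because $n_N\ge N^{s+1}/s!$. A larger $n_N$ only helps, so an upper bound on the order of vanishing is never used, and the characteristic-$p$ difficulties you anticipate in Step 2 do not arise.
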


We will use a slightly more general version of this theorem, essentially based on the work of D. Adam and L. Denis in \cite{Adam_Denis}, a proof of which will be given in the appendix.

\begin{thm}{\label{Adam_Denis}}
Let $\K \subset \overline{k}$ be a finite extension of $k$. Let $n \geq 1$,
$d \geq 1$ be two integers and $f_1(z), . . . , f_n(z) \in \K[\![z]\!]$ be $v_\infty$-adic analytic functions defined in a neighborhood of the origin satisfying the $q^d$-Mahler system:
$$\begin{pmatrix}
    f_1(z^{q^d})\\ \vdots \\f_n(z^{q^d})
\end{pmatrix} = A(z) \begin{pmatrix}
    f_1(z)\\ \vdots \\f_n(z)
\end{pmatrix}$$
where $A(z) \in \GL_n(\K(H))$, $H= \overline{\F_q}[\![Z]\!] \cap \overline{\F_q(Z)}$.

Let $\alpha \in \overline{k}\backslash\{0\}$ be such that $v_\infty(\alpha) > 0$ and such that for all $k \in \N$, the matrix $A(\alpha^{q^{kd}})$ is well defined and invertible. Then the following equality holds:
$$\degtr_{k}\big(f_1(\alpha),...,f_n(\alpha)\big) = \degtr_{k(z)}\big(f_1(z),...,f_n(z)\big).$$
\end{thm}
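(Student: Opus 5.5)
The plan is to reduce the statement to a Nishioka–Fernandes type argument in the style of Theorem \ref{Fernandes}, rerun with two changes. First, the entries of $A(z)$ are now rational in $z$ and in elements of $H=\overline{\F_q}[\![Z]\!]\cap\overline{\F_q(Z)}$ evaluated at $Z=z$. Second, the Mahler operator is $z\mapsto z^{q^d}$, a power of $q$. The observation that makes this work is the following. An element $h\in H$ is a power series with coefficients in a finite field $\F_{q^r}$ that is algebraic over $\F_q(z)$. Since the Frobenius acts trivially on $\F_{q^r}$ after $r$ iterations, we have $h(z^{q^{dr}})=h(z)^{q^{dr}}$. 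Moreover, $h$ satisfies an Artin–Schreier/Ore type equation $\sum_j c_j(z)h^{q^j}=0$ with $c_j\in\F_q[z]$. Hence $h(z^{q^{dm}})$ is expressible as a polynomial in $h(z)$ with coefficients in $\F_q(z)$, up to finitely many conjugates.

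\textbf{Step 1 (reduction).} I first replace $d$ by $dr$, iterating the system $r$ times. This does not change either transcendence degree, and invertibility of $A(\alpha^{q^{kd}})$ is preserved under iteration. Next I let $h_1,\dots,h_s\in H$ be the finitely many elements of $H$ occurring in $A$, and I enlarge them to a finite set $\mathcal H\subset H$ stable under the Frobenius twist. This is possible: the $\F_q(z)$-span of $\{h^{q^j}\}_j$ is finite dimensional by algebraicity, and $h(z^{q})=\sigma(h)^q$ where $\sigma$ acts on coefficients.

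\textbf{Step 2 (auxiliary system).} I adjoin the $h\in\mathcal H$ as new unknown functions. Their values $h(\alpha)$ converge, since $v_\infty(\alpha)>0$ and the coefficients lie in a finite field. Each $h(\alpha)$ is algebraic over $\F_q(\alpha)\subset\overline{k}$, because $h$ is algebraic over $\F_q(z)$ and specialization at a point of the convergence disc respects polynomial relations. Therefore adjoining the $h$ changes neither $\degtr_k$ of the values nor $\degtr_{k(z)}$ of the functions. Let $L=\K(z)(\mathcal H)$, a finite extension of $\K(z)$. I then run the Adam--Denis proof with the base field $\K(z)$ replaced by $L$, working over a finite extension of $k$ obtained by adjoining the $h(\alpha^{q^{kd}})$. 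That proof consists of:
\begin{itemize}
\item the construction of an auxiliary polynomial $P\in \K[z][X_1,\dots,X_n]$ with $P(z,f(z))$ vanishing to high order, by Siegel's lemma over function fields;
\item evaluation at $\alpha^{q^{kd}}$ using the functional equation $f(z^{q^{kd}})=A_k(z)f(z)$ with $A_k=A(z^{q^{(k-1)d}})\cdots A(z)$;
\item a Liouville-type lower bound against an analytic upper bound;
\item a zero estimate for Mahler systems, which Fernandes adapted to positive characteristic.
\end{itemize}
The heights of $A_k(\alpha)$ grow like $q^{kd}$ times a constant. This still holds for entries in $L$, because the $h(\alpha^{q^{kd}})=h(\alpha)^{q^{kd}}$ (after Step 1) have height $q^{kd}h(h(\alpha))$.

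\textbf{Step 3 (the two inequalities).} The inequality $\le$ is immediate from specialization. For $\ge$, I apply the Nishioka–Philippon criterion for algebraic independence over $k$, exactly as in Theorem \ref{Fernandes}. The measures involved behave as before, since all height and degree estimates are multiplicative in $q^{kd}$. I expect the \textbf{main obstacle} to be the multiplicity (zero) estimate when the coefficient field is the algebraic extension $L$ rather than $\K(z)$. The Mahler map $z\mapsto z^{q^d}$ must extend to an endomorphism of $L$, and it is inseparable in characteristic $p$. The standard argument bounding orders of vanishing of $P(z,f(z))$ via invariant ideals must therefore handle Frobenius-twisted ideals. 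I would first try to descend: choose a primitive element $\eta$ of $L/\K(z)$ with $\eta\in H$, write $L=\K(z)[\eta]$, and add $\eta,\eta^q,\dots$ to the vector $f$. This turns the system into one over $\K(z)$ after rewriting $\eta(z^{q^d})=\eta^{q^d}$ as a polynomial relation. The system then becomes polynomial rather than linear, and at that point I would invoke the non-linear version of the zero estimate used by Adam and Denis.
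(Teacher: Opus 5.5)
\textbf{Gap: the zero estimate.} Your outline has the right ingredients: passing to a multiple of $d$ so that each $h(z^{q^{kd}})$ is a Frobenius power of a fixed series, which puts all $A(\alpha^{q^{kd}})$ in one finite extension with heights $O(q^{kd})$, and then Philippon's criterion. But it does not close. The step you call the main obstacle, a multiplicity estimate over $L$ (or, after your descent, a ``non-linear zero estimate of Adam and Denis''), is left as an unidentified black box. Your descent also turns the system into a polynomial one, to which nothing you cite applies.

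The missing idea is that no zero estimate is needed. The paper proves the more general Theorem \ref{Adam_Denis2}, which only asks that the entries of $A$ be algebraic over $k(z)$ and that all $A(\alpha^{q^{kd}})$ have entries in a fixed finite extension. The latter holds here because $h(Z^{q^{kd}})=h(Z^{q^{\mu}})^{q^{\lambda l}}$ with $0\le\mu<l$. The argument then runs as follows.
\begin{itemize}
\item For fixed $N$, the auxiliary polynomial $R_N$ comes from a bare dimension count (Lemma \ref{lemme construction R_N}) giving $n_N\ge N^{s+1}/s!$. No height control is needed, since $h(R_N)$ is absorbed into $c_0(N)$.
\item The iterates $P_{N,t}$ have degree $N$ and height $O(q^{(t+c_0(N))d}N)$ by Lemma \ref{lemme majoration hauteur denominateur alg}.
\item Because $|\cdot|_\infty$ is ultrametric, $|E_N(\alpha^{q^{td}})|_\infty=|\alpha|_\infty^{n_N q^{td}}$ holds exactly for $t\ge c_0(N)$. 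This gives the upper and lower bounds at once.
\item Corollary \ref{cor philippon} is applied with $N$ fixed and $t\to\infty$. The uncontrolled dependence of $c_0(N)$ on $N$ is therefore harmless, and no upper bound on $n_N$ is ever used.
\end{itemize}
Working over $L$ rather than $\K(z)$ costs nothing, since only the values $A(\alpha^{q^{kd}})$ enter.

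\textbf{Alternative repair: keep the descent linear.} After your Step 1, choose a $\K(z)$-basis $e_1=1,e_2,\dots,e_m$ of $L$ made of monomials in $\mathcal H$. Take as unknowns all products $e_jf_i$, not just the $\eta^{q^j}$. Expanding $e_j(z^{q^d})A_{il}(z)=e_j^{q^d}A_{il}(z)\in L$ in this basis gives $F(z^{q^d})=B(z)F(z)$ with $B$ having entries in $\K(z)$. The matrix $B$ is invertible for two reasons: $A$ is invertible over $L$, and the $e_j^{q^d}$ again form a $\K(z)$-basis of $L$. The latter holds because elements of $H$ are separable over $\F_q(z)$, so $\K(z)(h^{q^d})=\K(z)(h)$. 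The matrix $B(\alpha^{q^{kd}})$ fails to be defined or invertible for only finitely many $k$. Replacing $\alpha$ by some $\alpha^{q^{k_0d}}$ does not change $\degtr_k$ of the values of $f$, since the $A(\alpha^{q^{kd}})$ are invertible with algebraic entries. Since $e_1=1$ and the $e_j(\alpha)$ are algebraic, Theorem \ref{Fernandes} then gives the statement directly, with no non-linear zero estimate.
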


Recall that $\Tilde{\theta} \in \C_\infty$ is a $q-1$-th rooth of $-\theta$.
We here propose interpolating functions of the Carlitz logarithm generalizing those constructed by L. Denis.
To this end, let us write $u_i = \exp_C(\lambda_i)$ and assume that $v_\infty(u_i)> -\frac{q}{q-1}$ for $i \in [\![1,n]\!]$. Let $u \in \overline{k}$ be a uniformizer such that $k_\infty(\Tilde{\theta},u_1,...u_n) = \F_{q^d}(\!(u)\!)$ for an integer $d \geq 1$.
We first define the functions $\mathbf{l}(Z) \in \F_{q^d}[\![Z]\!]$ and $\mathbf{l}_i(Z) \in \F_{q^d}[\![Z]\!]$
such that $\mathbf{l}(u) = \frac{1}{\theta}$ and $\mathbf{l}_i(u) = \frac{u_i}{\theta \Tilde{\theta}}$, following the idea of D. Adam et L. Denis in \cite{Adam_Denis}.
We then introduce the following functions:
$$G_i(Z) = G_i(Z,\beta,s) :=  \sum_{k \geq 0} \frac{\mathbf{l}_{i}^{q^k}(Z)}{\pi_k(Z,\beta,s)} \in \F_{q^d}(\theta,\beta)[\![Z]\!],$$
whith $s$ a positive integer, $\beta \in \overline{k}^*$, and where $\pi_k(Z) =\pi_k(Z,\beta,s):= \beta^{-k} \prod_{i=1}^k (1 - \theta \mathbf{l}(Z)^{q^i})^s$;
constructed so that $ \theta\Tilde{\theta} G_i(u,1,1) = \Log_C(u_i)$.

These functions satisfy the functional equation:
$$G_i(Z^{q^{d}}) = \pi_d(Z)( G_i(Z) - M_i(Z))$$
where $M_i(Z) = \sum_{k=0}^{d-1} \frac{\mathbf{l}_i^{q^k}(Z)}{\pi_k(Z)}$.This equation yields, on the one hand, a $q^d$-Mahler system satisfying the conditions of the \color{blue}theorem \ref{Adam_Denis}\color{black}, and on the other hand, the convergence of the sequence $\left(\sum_{j=0}^{k-1} \frac{M_i(Z^{q^{jd}})}{\pi_{jd}(Z)}\right)_{k\geq 1}$ to $G_i(Z)$.

A key step in the proof will be to also consider an interpolating function of the period $\Tilde{\pi}$, closely linked to the denominators $\pi_k(Z)$. Let us then consider the function:
$$G_0(Z) = G_0(Z,s) := \prod_{k=1}^\infty \left(1 - \theta \mathbf{l}(Z)^{q^k}\right)^{-s} = \underset{k\to \infty}{\lim}\pi_k(Z,1,s)^{-1}$$
where $s$ is a positive integer, constructed in such a way that $G_0(u,1) = \frac{1}{\theta\Tilde{\theta}} \Tilde{\pi}$. It also satisfies a functional equation:
$$G_0(Z^{q^d}) = \beta^d \pi_d(Z) G_0(Z)$$
which is similar to the one satisfied by the functions $G_i$ for $i\geq 1$.

Let $\mathbf{B}$ be the purely inseparable closure $\overline{\F_q}(\theta,\beta)$ in $\C_\infty$, we will prove the following result  for these function:

\begin{thm}{\label{1.4}}
    Let $a \in \{0, 1\}$. If the family $(G_i(u))_{i=a}^n$ is $\mathbf{B}$-linearly independent, then it is $\overline{k}$-algebraicly independent.
\end{thm}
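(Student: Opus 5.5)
The plan follows the classical route: reduce the number-theoretic statement to a functional one via Theorem \ref{Adam_Denis}, and then prove the functional statement with a Kolchin/Ostrowski-type argument for Mahler equations.

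\emph{Step 1: the Mahler system.} I put $G_1,\dots,G_n$ (and $G_0$ when $a=0$) into one $q^d$-Mahler system. For $i\ge 1$ I use the vector $(1,G_i)$, whose matrix is
\[
\begin{pmatrix}1&0\\ -\pi_d M_i & \pi_d\end{pmatrix}.
\]
For $G_0$ I use the scalar equation $G_0(Z^{q^d})=\beta^d\pi_d(Z)G_0(Z)$. Since $\mathbf{l},\mathbf{l}_i\in\F_{q^d}[\![Z]\!]$ are algebraic over $\F_q(Z)$, they lie in $H$, so the coefficients lie in $\K(H)$ with $\K=k(\beta,\tilde\theta,u_i)$. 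Writing $\pi_d(Z)$ in terms of $\mathbf{l}$, invertibility at $Z=u^{q^{kd}}$ reduces to $1-\theta\,\mathbf{l}(u)^{q^{kd+i}}\neq 0$. This holds because $\mathbf{l}(u)=1/\theta$ and $\theta^{1-q^j}\ne1$. Theorem \ref{Adam_Denis} at $\alpha=u$ then gives
\[
\degtr_k (G_a(u),\dots,G_n(u)) = \degtr_{k(Z)}(G_a(Z),\dots,G_n(Z)).
\]
So it suffices to prove the functional statement: the $G_i(Z)$ are algebraically independent over $\overline{k}(Z)$, or more precisely over the relevant field $\mathbf{F}=\K(H)$.

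\emph{Step 2: algebraic relations come from linear ones.} Write $\phi$ for the endomorphism $Z\mapsto Z^{q^d}$. Suppose the $G_i$ are algebraically dependent over $\mathbf{F}$. The $G_i$ for $i\ge1$ are inhomogeneous solutions $\phi(G_i)=\pi_d(G_i-M_i)$. I would apply a Kolchin/Ostrowski–Hardouin-type theorem for difference equations sharing the multiplier $\pi_d$. This yields either a linear relation $\sum c_i G_i = g\in \mathbf{F}$ with $c_i$ $\phi$-constants, or, when $G_0$ is present, a multiplicative relation $G_0^m\in\mathbf{F}$. The second option is excluded by a growth argument: $G_0$ is an infinite product whose zeros and poles accumulate at $|Z|=|u|$ boundary points, so no power of it is algebraic. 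Combining the torus part and the unipotent part handles the mixed case $a=0$.

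\emph{Step 3: transfer back to values.} Suppose a relation $\sum c_iG_i(Z)=g(Z)$ holds with $g$ algebraic over $\K(Z)$. The $\phi$-constants in this setting are elements of $\overline{\F_q}(\theta,\beta)$ together with their $p$-power roots, that is, $\mathbf{B}$. I then compare the functional equations. The relation gives $\phi(g)=\pi_d(g-\sum c_iM_i)$ with $g$ algebraic. A degree/pole argument on $g$ shows $g$ is of special shape, with the $G_0$ piece forcing coefficient $0$. Specializing at $Z=u$ produces a nontrivial $\mathbf{B}$-linear relation among the $G_i(u)$. This requires that $\sum c_iG_i(u)$ vanishes, which follows because $g(u)$ must itself satisfy the iterated telescoping sum. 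That contradicts the hypothesis.

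The main obstacle is Step 2–3, the functional independence. The field $\K(H)$ is not the usual difference field. Both the constant field (which is why $\mathbf{B}$, a purely inseparable closure, appears) and the proof that an algebraic $g$ satisfying the inhomogeneous equation forces a value relation need care. I would first try to handle it via Puiseux-type expansions in $H$ and valuation comparisons at the point $u$, where $1-\theta\mathbf{l}^{q^i}$ vanishes to controlled order.
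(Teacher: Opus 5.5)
You have the paper's architecture: reduce to functions via Theorem~\ref{Adam_Denis} (your Step~1 is fine and amounts to Proposition~\ref{Nishioka G_i}), show that a functional relation forces some $\mathbf{B}$-combination $\sum\eta_iG_i(Z)$ to be algebraic, then show that combination vanishes at $u$. But Steps~2 and~3 are the whole content of the theorem, and there you rely on a black box and on claims that are false.

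\textbf{Step 2.} You cannot simply quote a Kolchin--Ostrowski/Hardouin theorem here. In characteristic $p$, the derivative argument on a minimal semi-invariant polynomial only gives $Q=\sum\eta_iX_i+\Tilde{Q}(X_1^p,\dots,X_{i_0}^p)$. The paper must then extract $p$-th roots, which is why it works in $\mathbf{U}_Z$ (with $Z^{1/p^k}$) and over the perfect field $\mathbf{B}$. It then removes the non-constant $p$-power part by applying Lemma~\ref{lemme annulation eq fonct alg} to its leading coefficient, which satisfies $\delta(Z^{q^d})=\pi_d(Z)^{1-pJ}\delta(Z)$ with $J\geq1$.

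Your dichotomy is also wrong. The alternative to ``$\sum\eta_iG_i$ algebraic'' is not $G_0^m\in\mathbf{F}$; that is excluded trivially because $G_0$ is transcendental. The real alternative is that the linear semi-invariant $\nu+\sum_{i\geq1}\eta_iG_i$ equals a nonzero solution $CG_0$ of $w(Z^{q^d})=\pi_d(Z)w(Z)$, which exists when $\beta^d=1$. This actually happens. For $\beta=s=1$ and $u_1=\Tilde{\theta}$ one has $\mathbf{l}_1=\mathbf{l}$, and telescoping gives $\theta G_1(Z)-G_0(Z)=\theta\mathbf{l}(Z)-1$. So ``the $G_0$ piece forcing coefficient $0$'' is false.

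For $a=1$, dropping $G_0$ from the system does not help. The argument can still output $\sum_{i\geq1}\eta_iG_i-CG_0\in\mathbf{u}_Z$, which contradicts nothing about $(G_i(u))_{i\geq1}$. The paper treats $a=1$ by an exchange at the level of values. From $\lambda_0G_0(u)+\sum_{i\geq1}\lambda_iG_i(u)=0$ with $\lambda_0\lambda_n\neq0$, it applies the case $a=0$ to $(G_i(u))_{i=0}^{n-1}$ and substitutes.

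\textbf{Step 3.} The implication ``$\mathcal{G}=\sum\lambda_iG_i$ algebraic $\Rightarrow$ $\mathcal{G}(u)=0$'' is the key lemma, and ``the iterated telescoping sum'' does not give it. Every $\pi_k(u)$ is nonzero; this is exactly the invertibility hypothesis needed for Theorem~\ref{Adam_Denis}. So the telescoping identity evaluated at $u$ imposes nothing, and $1-\theta\mathbf{l}^{q^i}$ does not vanish at $u$.

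The vanishing occurs at the roots $z_r=u^{1/q^{rd}}$, where $\pi_{rd}(z_r)=0$. The paper writes $\mathcal{G}(z)$ as a finite partial sum plus $\mathcal{G}(z^{q^{rd}})/\pi_{rd}(z)$. Since an algebraic function has only finitely many poles, some $z_{r_0}$ is not a pole. This forces $\mathcal{G}(u)=\mathcal{G}(z_{r_0}^{q^{r_0d}})=0$ (Lemma~\ref{lemme somme algebrique => K-dependance}). This ``poles at the roots of $u$'' mechanism is the idea missing from your plan.

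Similarly, your growth argument misplaces the singularities. The poles $u^{1/q^{jd}}$ of $G_0$ accumulate towards $|Z|_\infty=1$, not at $|Z|=|u|$.
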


In order to ultimately reach M. Papanikolas's theorem, it will be necessary to descend to linear independence over $k$. The method we use allows this descent only under additional restrictive conditions on the parameter $\beta$.

\begin{thm}{\label{1.5}}
Let $a \in \{0, 1\}$ and $\beta \in \bigcup_{h \in \N} \F_{q}(\theta^{1/p^h})$.
If the family $(G_i(u))_{i=1}^n$ is $k$-linearly independent, then it is $\overline{k}$-algebraicly independent.
\end{thm}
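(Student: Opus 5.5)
Theorem~\ref{1.5} will be deduced from Theorem~\ref{1.4}. The plan is to show that, under the hypothesis on $\beta$, $k$-linear independence of $(G_i(u))_{i=1}^n$ already forces $\mathbf{B}$-linear independence. Theorem~\ref{1.4} with $a=1$ then gives algebraic independence over $\overline{k}$. (For $a=0$ the same argument applies to the enlarged family including $G_0(u)$. Since the hypothesis only concerns $i\geq 1$, I read the statement as $a=1$, the $a=0$ variant being analogous.)

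The reduction to $\mathbf{B}$-linear independence goes in three steps.

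\textbf{Step 1 (descending from $\overline{\F_q}$ to $\F_{q}$ and controlling $\beta$).} With $\beta\in\F_q(\theta^{1/p^h})$, we have $\mathbf{B}=\overline{\F_q}(\theta,\beta)^{\mathrm{perf}}$, which is the perfect closure of $\overline{\F_q}(\theta)$. Suppose there is a nontrivial relation $\sum_{i=1}^n b_i G_i(u)=0$ with $b_i\in\mathbf{B}$. Raising it to a suitable power $p^N$ puts all coefficients in $\overline{\F_q}(\theta)$, and indeed in $\F_{q^m}(\theta)$ for some $m$. The price is that each $G_i(u)$ is replaced by $G_i(u)^{p^N}$.

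\textbf{Step 2 (compatibility of Frobenius twists).} Now use the Frobenius semilinearity of the construction. The coefficients of $G_i$ lie in $\F_{q^d}(\theta,\beta)$, and $\beta$ lies in an $\F_q$-rational inseparable extension. Since $G_i(Z)^{q^r}$ is essentially the series $\sum_k \mathbf{l}_i^{q^{k+r}}/\pi_k^{q^r}$, I would relate $G_i(u)^{q^r}$ to values of the same type of function, with $\theta,\beta$ replaced by $\theta^{q^r},\beta^{q^r}$. For the coefficients I would instead use the $\F_q(\theta)$-structure: Galois conjugating over $\F_q(\theta)$ by $\mathrm{Gal}(\F_{q^m}/\F_q)$, which fixes $\theta$, $\beta$, $u_i$ and $\tilde\theta$ up to the chosen embedding, permutes the coefficients while fixing the $G_i(u)$. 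Here I need to check that $G_i(u)$ lies in a field on which this Galois action makes sense and fixes it. That is the role of the hypothesis $\beta\in\bigcup_h\F_q(\theta^{1/p^h})$: it guarantees that $G_i(u)\in k_\infty(\tilde\theta,u_1,\dots,u_n,\beta)$ is fixed by the constant-field automorphisms. Taking traces then yields a relation with coefficients in $\F_q(\theta)=k$.

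\textbf{Step 3 (removing the $p$-power).} A relation $\sum c_i G_i(u)^{p^N}=0$ with $c_i\in k$ is not yet $k$-linear in the $G_i(u)$. I would take $p^N$-th roots, so that $c_i^{1/p^N}\in\F_q(\theta^{1/p^N})$, and then expand in the $k$-basis $\{\theta^{j/p^N}\}_{0\le j<p^N}$ of $\F_q(\theta^{1/p^N})$. The obstacle is to show that each component gives a $k$-linear relation. This needs the fact that $k_\infty(\tilde\theta,u_i,\beta)$ is linearly disjoint from $k(\theta^{1/p^N})$ over $k$, which can be arranged by replacing $\theta$ with $\theta^{1/p^N}$ in the definition of $\beta$ (this is why $\beta$ is allowed in $\F_q(\theta^{1/p^h})$). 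The main obstacle is this last step, namely the linear disjointness and separability argument in positive characteristic. I would attack it by writing $\mathbf{B}$ as $\bigcup_N \overline{\F_q}(\theta^{1/p^N})$ and proving a direct-sum decomposition $\C_\infty\supset\bigoplus_j \theta^{j/p^N}\cdot k_\infty(\tilde\theta,u_i)$, using that $k_\infty(\tilde\theta,u_i)/k_\infty$ is separable, since $\tilde\theta$ is a $(q-1)$-th root and the $u_i$ are algebraic, with inseparability handled via the $u$-adic expansion in $\F_{q^d}(\!(u)\!)$.
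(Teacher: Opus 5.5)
Your proposal has a genuine gap. It tries to prove, directly on the values, that $k$-linear independence of the $G_i(u)$ forces $\mathbf{B}$-linear independence. Step~2 of that argument is false.

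\textbf{Step 2.} The values lie in $K_\infty(\beta)$, where $K_\infty=k_\infty(\tilde\theta,u_1,\dots,u_n)=\F_{q^d}(\!(u)\!)$, and nothing forces $d=1$. Take $u_1=\zeta\in\F_{q^2}\setminus\F_q$. Then Frobenius acting on the coefficients of $\F_{q^2}(\!(1/\theta)\!)$ sends $\Log_C(\zeta)=\sum_k(-1)^k\zeta^{q^k}/L_k$ to $\Log_C(\zeta)+(\zeta^q-\zeta)\sum_{k\geq0}L_k^{-1}\neq\Log_C(\zeta)$. So $G_1(u)$ is moved by every automorphism that is nontrivial on $\F_{q^2}$ and fixes $k_\infty(\tilde\theta)$.

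The hypothesis on $\beta$ only ensures that $\beta$ adds no new constants. It does nothing about the constants $\F_{q^d}$ already present in $K_\infty$. Your trace argument therefore descends the coefficients at best to $\F_{q^d}(\theta)$.

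Going further cannot come from linear disjointness. For general elements of $\F_{q^d}(\!(u)\!)$ the implication is false: $1$ and $\zeta$ are $k$-independent but $\overline{\F_q}$-dependent. For the $G_i(u)$ it is a Baker-type transcendence statement, i.e.\ a consequence of what you are proving rather than an available input.

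\textbf{Step 3.} This step fails in the same way. Both $\beta=\theta^{1/p}$ and an inseparable point such as $u_1=\theta^{1/p}$ are allowed. In either case $\theta^{1/p}\in K_\infty(\beta)$, so the $\theta^{j/p^N}$ are not linearly independent over the field containing the values. Moreover, ``replacing $\theta$ by $\theta^{1/p^N}$ in the definition of $\beta$'' is not an available move, since $\beta$ is given.

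\textbf{The missing idea: descend on the functions $G_i(Z)$, not on the values.} The paper's argument runs as follows.
\begin{enumerate}
\item Algebraic dependence of the values gives, via the Nishioka-type proposition and the reduction to degree one, elements $\eta_i\in\mathbf{B}$ with $\sum_i\eta_iG_i(Z)\in\mathbf{u}_Z$.
\item Raise to a $p^j$-th power to get coefficients in $\overline{\F_q}[\theta]$. Take $j_0$ minimal, then a degree-minimal family $(\kappa_i)$.
\item The derivation $\mathbf{D}$, which differentiates coefficients in $\theta$, preserves $\overline{\F_q}(\theta)(\!(Z)\!)\cap\mathbf{u}_Z$ and kills $G_i^{p^{j_0}}$ when $j_0\geq1$. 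Minimality then forces $\kappa_i\in\overline{\F_q}[\theta^p]$, contradicting the choice of $j_0$. Hence $j_0=0$.
\item Use the twisted Frobenius $\sigma$: Frobenius on $\overline{\F_q}$, $\theta\mapsto\theta$, $Z\mapsto Z^q$. It preserves $\mathbf{u}_Z$. Because it fixes $\beta$ (the essential use of the hypothesis on $\beta$), it satisfies $\sigma(G_i)=\pi_1(G_i-\mathbf{l}_i)$ and $\sigma(G_0)=\pi_1G_0$.
\item Since $\mathbf{l}_i$ has coefficients in $\F_{q^d}$, one has $\sigma(\mathbf{l}_i)=\mathbf{l}_i^q$. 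The hidden constants are absorbed by $Z\mapsto Z^q$ instead of being moved, which is exactly what fails on the values.
\item Normalize one $\kappa_m$ to be monic. Then $\sum_i(\kappa_i-\sigma(\kappa_i))G_i(Z)$ is algebraic with smaller degrees, so every $\kappa_i\in\F_q[\theta]$.
\item The lemma turning an algebraic combination $\sum_i\lambda_iG_i(Z)$ into a relation $\sum_i\lambda_iG_i(u)=0$, applied with $\K=k$, gives the $k$-linear dependence of the values.
\item Finally, $G_0$ is removed as in the proof of Theorem~\ref{1.4}.
\end{enumerate}
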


We only need the case $\beta = 1$ to conclude on the independence of the logarithms. The presence of $\Tilde{\pi}$ in the case $(\beta,s) = (1,1)$ then allow us to fully obtain Papanikolas's \color{blue}theorem \ref{Papanikolas}\color{black}.\\

\paragraph{\textbf{Acknowledgments:}}
The author would like to warmly thank David Adam for numerous helpful suggestions, Federico Pellarin for constructive comments, and of course Bruno Anglès and Vincent Bosser for their valuable advice and the many stimulating discussions during the preparation of this article.

\newpage
\section{Algebraic Independence of logarithms}

\begin{notations} \text{ }\\
• $k := \F_q(\theta)$;\\
• $k_\infty := \F_q(\!(1/\theta)\!) $;\\
• $\C_\infty$ a completion of an algebraic closure of $k_\infty$;\\
• $v_\infty : \C_\infty \rightarrow \Q \cup \{+\infty\}$ the valuation such that $v_\infty(\theta) = -1$;\\
• $v_Z$ the valuation on $\C_\infty(\!(Z)\!)$ such that $v_Z(Z) = -1$;\\
• $\overline{k}$ the algebraic closure of $k$ contained in $\C_\infty$;\\
• $D_\infty(r) := \{ z \in \C_\infty, v_\infty(z) > r \}$ for $r \in \R$;\\
• $\Tilde{\theta}$ a fixed $q-1$-th rooth of $-\theta$ in $\overline{k}$.
\end{notations}

We recall the definition of the Carlitz Logarithm: 
$$\Log_C(z) := \sum_{k\geq 0} (-1)^k \frac{z^{q^k}}{L_k}$$ 
with $L_k := \prod_{i=1}^k (\theta^{q^i} - \theta)$. Using the computation of the valuation $v_\infty(L_k) = - \frac{q^{k+1}-q}{q-1}$, this logarithm is defined for all elements $z \in D_\infty(-\frac{q}{q-1})$.\\
Next, we define the sequence $\pi_k$ as follows:
$$\pi_k := \prod_{i=1}^k (1-\theta^{1-q^i}) = \theta^\frac{q-q^{k+1}}{q-1} L_k = (-1)^{k} \Tilde{\theta}^{q-q^{k+1}} L_k = (-1)^{k} (\theta\Tilde{\theta})^{1-q^k}L_k.$$
Hence, the Carlitz logarithm can be expressed as follows:
$$\Log_C(z) = \theta\Tilde{\theta}\sum_{k\geq 0} \frac{z^{q^k}}{(\theta\Tilde{\theta})^{q^k} \pi_k}.$$
The point of this representation is that we no longer consider the function $\Log_C$ defined on $D_\infty(-\frac{q}{q-1})$, but the function
$$z \mapsto \sum_{k\geq 0}\frac{z^{q^k}}{\pi_k}$$
defined on $D_\infty(0)$.

\subsection{Construction of the interpolation functions} \text{ }

Let $u_1,...u_n$ be elements of $\overline{k}$ with valuation $> -q/(q-1)$.
Let $K = k[\Tilde{\theta},u_1,...,u_n]$ be a finite extension of $k$. Consider $A_v = \{x \in K, v_\infty(x) \geq 0 \}$, the discrete valuation ring associated with the field $K$ and the valuation $v_\infty$. We note $u$ an uniformizer of this ring. It follows that the element $u$ has strictly positive valuation  and is algebraic over $k$.\\
Let $K_\infty = k_\infty .K \subset \C_\infty$. This is a finite extension of $k_\infty$, and therefore $K_\infty$ is a local field with $u$ as a uniformizer, and whose residue field is a finite extension of $\F_{q}$, that is, of the form $\F_{q^{d}}$ for some integer $d$.
The valuation ring $\O_\infty$ of $K_\infty$ can thus be written $\O_\infty = \F_{q^{d}}[\![u]\!]$. 
Consequently, every element of $K$ with strictly positive valuation can be represented as a power series in the uniformizer $u$ with coefficients in $\F_{q^d}$.

Now, observe that the elements $\frac{1}{\theta}$ and $\frac{1}{\theta\Tilde{\theta}}u_i$ all have strictly positive valuation. Hence, they all belongs to the set $u\F_{q^{d}}[\![u]\!]$.

Let $e := \frac{1}{v_\infty(u)}$ denote the ramification index of $u$ in the extension $K/k$.
Since $v_\infty(\frac{1}{\theta}) = 1$, we obtain the more precise result: 
$$\frac{1}{\theta}\in u^e\F_{q^{d}}[\![u]\!]  \text{ and } \frac{1}{\theta \Tilde{\theta}}u_i\in u\F_{q^{d}}[\![u]\!] .$$
We then define the functions $\mathbf{l}(Z)$ and $\mathbf{l}_i(Z)$ in $\F_{q^d}[\![Z]\!]$ via these expansions:  $$\frac{1}{\theta} = \mathbf{l}(u) \text{ and } \frac{1}{\theta \Tilde{\theta}}u_i = \mathbf{l}_i(u).$$

Using these two functions, we define interpolation function $G_i$ for the Carlitz logarithm of $u_i$ for $i \in [\![1,n]\!]$, by:
$$G_i(Z) = G_i(Z,\beta,s) :=  \sum_{k \geq 0} \frac{\mathbf{l}_{i}^{q^k}(Z)}{\pi_k(Z,\beta,s)}$$
whith $\beta \in \overline{k}^*$, $s$ a positive integer,
and where the denominator is given by: 
$$\pi_k(Z) = \pi_k(Z,\beta,s):= \beta^{-k} \prod_{i=1}^k (1 - \theta \mathbf{l}(Z)^{q^i})^s,$$
so that $ G_i(u,1,1)= \frac{1}{\theta\Tilde{\theta}} \Log_C(u_i)$.\\

Finally let $\Tilde{\pi} = \theta\Tilde{\theta} \prod_{k=1}^\infty \left(1- \theta^{1-q^k}\right)^{-1}$ denote the fundamental period of the Carlitz exponential. We set:

$$G_0(Z) = G_0(Z,s) := \prod_{k=1}^\infty \left(1 - \theta \mathbf{l}(Z)^{q^k}\right)^{-s} = \underset{k\to \infty}{\lim}\pi_k(Z,1,s)^{-1}$$
where $s$ is a positive integer, so that $G_0(u,1) = \frac{1}{\theta\Tilde{\theta}} \Tilde{\pi}$.

\begin{prop}
    The functions $\{G_i(z)\}_{i=0}^n$ are formal power series in z that converge as entire series on $D_\infty(\frac{1}{qe} )$ with coefficients in $\F_{q^{d}}(\theta, \beta)$.
\end{prop}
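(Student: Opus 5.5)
Write $\mathbf{l}(Z) = Z^e \epsilon(Z)$ with $\epsilon(Z)\in\F_{q^d}[\![Z]\!]$; this is possible because $\frac1\theta\in u^e\F_{q^d}[\![u]\!]$. Also write $\mathbf{l}_i(Z)=Z\,\eta_i(Z)$ with $\eta_i\in\F_{q^d}[\![Z]\!]$. The proof uses that all these coefficients lie in the finite field $\F_{q^d}$, so they have valuation $0$, while $\theta$ has $v_\infty(\theta)=-1$. Consequently every series in $\F_{q^d}[\![Z]\!]$ converges on $D_\infty(0)$, and there its values have valuation $\geq 0$.

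\emph{Step 1: the factors $1-\theta\mathbf{l}(Z)^{q^i}$.}
Write
$$\theta\mathbf{l}(Z)^{q^i}=\theta Z^{eq^i}\epsilon(Z)^{q^i}=\theta Z^{eq^i}\epsilon(Z^{q^i})$$
(Frobenius is additive, and $\F_{q^d}$ is fixed by the $q^{d}$-power map; for $q^i$ with $d\nmid i$ the coefficients are permuted, which does not affect valuations). For $z\in D_\infty(\frac1{qe})$ and $i\geq1$ this gives
$$v_\infty\big(\theta\mathbf{l}(z)^{q^i}\big)\geq -1+eq^i\,v_\infty(z)>-1+q^{i-1}\geq 0.$$
Hence each factor $1-\theta\mathbf{l}(Z)^{q^i}$ is a power series whose constant term is $1$. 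Its inverse $\sum_{m\ge0}(\theta\mathbf{l}^{q^i})^m$ is a power series in $Z$ with coefficients in $\F_{q^d}[\theta]$, starting with $1+O(Z^{eq^i})$. Moreover, on $D_\infty(\frac1{qe})$ this inverse is given by a convergent expression, and its value is a unit of valuation $0$.

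\emph{Step 2: $G_0$.}
Since the correction term $\theta\mathbf{l}^{q^k}$ has $Z$-adic order $\geq eq^k\to\infty$, the infinite product $\prod_k(1-\theta\mathbf{l}^{q^k})^{-s}$ converges $Z$-adically to a formal series in $\F_{q^d}[\theta][\![Z]\!]$. To see that it is an entire series on $D_\infty(\frac1{qe})$, I will bound the coefficients. The coefficient of $Z^N$ receives contributions only from the factors with $eq^k\leq N$. Each power of $\theta$ is accompanied by at least $eq^k\geq eq$ powers of $Z$, so the coefficient $c_N$ of $Z^N$ satisfies $v_\infty(c_N)\geq -N/(eq)$. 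This gives convergence for $v_\infty(z)>\frac1{qe}$, with $v_\infty(c_Nz^N)\to\infty$.

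\emph{Step 3: $G_i$.}
The coefficient $\beta^{k}$ is a constant, so $1/\pi_k(Z)$ is $\beta^k$ times a finite product of the series from Step 1. Hence its coefficients lie in $\F_{q^d}[\theta,\beta]$, with the same bound $v_\infty(\text{coeff of }Z^N)\geq -N/(eq)$ up to the constant shift $k\,v_\infty(\beta)$. The numerator satisfies $\mathbf{l}_i^{q^k}=Z^{q^k}\eta_i^{q^k}$, so the $k$-th term has $Z$-order $\geq q^k$. Therefore the sum converges formally, and the coefficient of $Z^N$ involves only $k\leq\log_q N$. That coefficient has valuation $\geq -N/(eq)+\log_q(N)\min(0,v_\infty(\beta))$. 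The logarithmic term is negligible, so the radius of convergence is unchanged: on $D_\infty(\frac1{qe})$ we get $v_\infty(c_Nz^N)\geq N(v_\infty(z)-\frac1{eq})-O(\log N)\to\infty$.

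\emph{Main obstacle.}
The delicate point is getting the uniform coefficient bound $-N/(eq)$ in Step 2. I will do it by tracking, in each monomial of the expanded product, the ratio between the $\theta$-degree and the $Z$-degree. Every factor $\theta\mathbf{l}^{q^k}$ contributes $\theta^1Z^{\geq eq^k}$, so the ratio is at most $1/(eq)$, and this ratio is preserved under multiplication and summation. Also, $\theta$ only ever appears in the numerator of these series. Therefore all coefficients are in $\F_{q^d}[\theta,\beta]\subset\F_{q^d}(\theta,\beta)$.
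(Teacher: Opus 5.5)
Your proof is correct, but it takes a different route from the paper's. The paper works with function values. It computes $v_\infty$ of the terms $\mathbf{l}_i(z)^{q^k}/\pi_k(z)$, and of $b_k(z)=\sum_{m\geq1}\theta^m\mathbf{l}(z)^{mq^k}$, at points $z$ of the disk. From this it gets that these tend to $0$ uniformly on $D_\infty(\frac{1}{qe})$, and it invokes Schikhof's Proposition 42.2 to conclude that the infinite sum, resp. product, is an entire series there.

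You work at the level of coefficients instead. The underlying mechanism is the same: every factor $\theta$ comes with at least $eq^k\geq eq$ powers of $Z$. From it you get $\deg_\theta c_N\leq N/(eq)$ for the $Z^N$-coefficient of $G_0$ and of $\beta^{-k}\pi_k^{-1}$. You then get $v_\infty(c_N)\geq -N/(eq)+\log_q(N)\min(0,v_\infty(\beta))$ for $G_i$, and read off the radius of convergence directly.

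What your route buys:
\begin{itemize}
\item It is self-contained, with no appeal to an external analytic result.
\item It gives an explicit coefficient bound.
\item It yields the slightly sharper fact that the coefficients lie in $\F_{q^d}[\theta,\beta]$.
\item It makes transparent that the $Z$-adic limit is itself the convergent series.
\end{itemize}

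What the paper's route gives for free is that the value of this series at $z$ equals the limit of the partial sums or products evaluated at $z$. This is used later for $G_0(u,1)=\Tilde{\pi}/(\theta\Tilde{\theta})$ and $G_i(u,1,1)=\Log_C(u_i)/(\theta\Tilde{\theta})$. In your setting this needs one extra remark, but it follows from your bounds. The tails, meaning the differences between the series and the partial sums or products, have $Z$-order at least $q^{K+1}$ (resp. $eq^{K+1}$), and their coefficients obey the same estimate, so their values at $z$ tend to $0$.

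Your identity $\epsilon(Z)^{q^i}=\epsilon(Z^{q^i})$ holds only after twisting the coefficients by Frobenius, as you note. This is harmless, since only valuations enter.
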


\begin{proof}
Let us first consider the case $i \in [\![1,n]\!]$.
The functions $\mathbf{l}(z)$ and $\mathbf{l}_{i}(z)$ are, by definition, formal power series. 
Since their coefficients lie in $\F_{q^{d}}$, they define entire series on $D_\infty(0)$.
On the other hand, for each $k \geq 0$, the function 
$\frac{1}{\pi_k(z)} =  \beta^k \prod_{i=1}^k \left( \sum_{m\geq 0}  \theta^m \mathbf{l}(z)^{q^im}\right)^s $ is a formal series with coefficients in $\F_{q^d}(\theta,\beta)$. Using the valuation calculation
$v_\infty(\theta^m \mathbf{l}(z)^{q^im}) = (eq^iv_\infty(z)-1)m$, we see that the function $\frac{1}{\pi_k(z)}$ defines an entire series on $D_\infty(\frac{1}{qe})$. 
consequently, the functions $\frac{\mathbf{l}_{i}^{q^k}(z)}{\pi_k(z)}$ also define entire series on $D_\infty(\frac{1}{qe})$ with coefficients in $\F_{q^d}(\theta, \beta)$.

Consider the valuations with respect to $Z$. We have:\\
• $\forall i \in [\![1,n]\!], v_Z( \mathbf{l}_i(Z)^{q^k}) = q^kv_Z(\mathbf{l}_i(Z)) \geq q^k$ ;\\ 
• $v_Z(\pi_k(Z)) = 0$.\\
It follows that $v_Z\Big(\frac{\mathbf{l}_{i}^{q^k}(Z)}{\pi_k(Z)}\Big) \geq q^k \underset{k \to \infty} \longrightarrow \infty$. 
Hence the series $G_i(Z)$ also defines a formal power series with coefficients in $\F_{q^d}(\theta, \beta)$. Moreover, note that $v_Z(G_i(Z)) > 1$.

Now let $z \in \C_\infty$ be such that $v_\infty(z) > \frac{1}{qe}$. We then have the following estimates:\\
• $\forall i \in [\![1,n]\!], v_\infty(\mathbf{l}_i(z)^{q^k}) \geq v_\infty(z)q^k $;\\
• $v_\infty(\pi_k(z)) = kv_\infty(\beta) + s \sum_{i=1}^k \min(0, v_\infty(\Tilde{\theta \mathbf{l}}(z)^{(q-1)q^i})) \leq  kv_\infty(\beta).$\\
Thus we obtain: 
$$v\Big(\frac{\mathbf{l}_{i}^{q^k}(z)}{\pi_k(z)}\Big) \geq kv_\infty(\beta) + v_\infty(z)q^k > kv_\infty(\beta) + \frac{1}{e}q^{k-1} \underset{k \to \infty} \longrightarrow \infty.$$

It follows that the sequence of functions $\Big( \frac{\mathbf{l}_{i}^{q^k}(z)}{\pi_k(z)} \Big)_{k\in \N}$ is a sequence of entire series on $D_\infty(\frac{1}{qe})$ that converges uniformly to 0 on $D_\infty(\frac{1}{qe})$. By \cite[Proposition 42.2]{Schikhof_1985}, the functions $G_i$ therefore defines an entire series on $D_\infty(\frac{1}{qe})$.

Now, let us study the function $G_0$.    
Using the expansion $\displaystyle(1 - \mathbf{l}(Z)^{q^i} \theta)^{-1} = 1 + \sum_{m=1}^\infty \theta^m \mathbf{l}(Z)^{q^im}$, we can write $G_0(z)$ as:
$$G_0(z) = \prod_{k=1}^\infty \left(1 - \theta \mathbf{l}(z)^{q^k}\right)^{-s} = \prod_{k=1}^\infty (1 + b_k(z))^s,$$ where $b_k(z) := \sum_{m=1}^\infty \theta^m \mathbf{l}(z)^{mq^k}$.
Since $v_Z(b_k(Z)) = eq^k \underset{k \to \infty} \longrightarrow \infty$, the function $G_0(z)$ defines a formal series with coefficients in $\F_{q^d}(\theta)$. 
Moreover, for $z \in D_\infty(\frac{1}{qe})$, we have $v_\infty(b_k(z)) = eq^kv_\infty(z) - 1 > q^{k-1} - 1 \underset{k \to \infty} \longrightarrow \infty$.
Hence the sequence of functions $(b_k(z))_{k=1}^\infty$ converges uniformly to 0 on $D_\infty(\frac{1}{qe})$. By \cite[Proposition 42.2]{Schikhof_1985}, it follows that the products converges to an entire series on $D_\infty(\frac{1}{qe})$.
\end{proof}

\subsection{Functional equation}\text{ }

Let $\displaystyle M_i(Z) = \sum_{k=0}^{d-1} \frac{\mathbf{l}_i^{q^k}(Z)}{\pi_k(Z)}$
be the partial sum of $G_i$ of order $d-1$.
The sequence of functions $(\pi_k(Z))_{k\in \N}$ satisfies the following relation:
\begin{equation}
    \pi_{k}(Z^{q^d}) = \frac{\pi_{k+d}(Z)}{\pi_d(Z)},
    \label{pik}
\end{equation}
Indeed, $\pi_k(Z^{q^d}) = \beta^{-k} \prod_{i=1}^{k}\Big(1 - \theta \mathbf{l}(Z)^{q^{d+i}}\Big)^s = \frac{\beta^d}{\beta^{d+k}} \prod_{i=d +1}^{d + k}\Big( 1 - \theta\mathbf{l}(Z)^{q^i}\Big)^s = \frac{\pi_{d+k}(Z)}{\pi_d(Z)}.$\\
Similarly, we obtain the relation:
\begin{equation}
    \pi_d(Z^{q^{kd}}) = \frac{\pi_{(k+1)d}(Z)}{\pi_{kd}(Z)}.
    \label{pikd}
\end{equation}

Now, using the relation \eqref{pik} inside the definition of $(G_i)_{i=1}^n$, we get:\\
$\pi_d(Z) ( G_i(Z) - M_i(Z))
= \sum_{k \geq d} \frac{\pi_d(Z)}{\pi_k(Z)} \mathbf{l}_{i}^{q^k}(Z)
= \sum_{k \geq 0} \frac{\mathbf{l}_{i}^{q^k}(Z^{q^{d}})}{\pi_k(Z^{q^d})}
= G_i(Z^{q^{d}})$.\\
Thus, we obtain the functional equation: 
\begin{equation}
    G_i(Z^{q^{d}}) = \pi_d(Z)( G_i(Z) - M_i(Z)),
    \label{Gi}
\end{equation} 
and the same for $G_0$:
\begin{equation}
    G_0(Z^{q^d}) = \beta^d \pi_d(Z) G_0(Z).
    \label{G0}
\end{equation}

\begin{lm}{\label{recurrence G_i}}
For all $i \in [\![1,n]\!]$, the functions $G_i$ satisfy the following relation:
$$\frac{G_i(Z^{q^{kd}})}{\pi_{kd}(Z)} = G_i(Z) - \sum_{j=0}^{k-1} \frac{M_i(Z^{q^{jd}})}{\pi_{jd}(Z)}.$$
Moreover, we have the following limit for the valuation $v_Z$:
$$\sum_{j=0}^{k-1} \frac{M_i(Z^{q^{jd}})}{\pi_{jd}(Z)} \underset{k \to \infty}{\longrightarrow} G_i(Z).$$
\end{lm}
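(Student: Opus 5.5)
We argue by induction on $k$, using only the functional equation \eqref{Gi} and the multiplicative relation \eqref{pikd}. For $k=0$ the identity reads $G_i(Z)/\pi_0(Z) = G_i(Z)$, which holds since $\pi_0(Z)=1$ (empty product, $\beta^0=1$). For $k=1$ it is exactly \eqref{Gi} divided by $\pi_d(Z)$, namely $G_i(Z^{q^d})/\pi_d(Z) = G_i(Z) - M_i(Z)$.

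For the inductive step, assume the formula holds at rank $k$. Substitute $Z \mapsto Z^{q^{kd}}$ in \eqref{Gi}; this is legitimate in $\F_{q^d}(\theta,\beta)[\![Z]\!]$ because all series involved have coefficients in this field and $v_Z(Z^{q^{kd}})>0$. We obtain
$$G_i(Z^{q^{(k+1)d}}) = \pi_d(Z^{q^{kd}})\bigl(G_i(Z^{q^{kd}}) - M_i(Z^{q^{kd}})\bigr).$$
By \eqref{pikd}, $\pi_d(Z^{q^{kd}}) = \pi_{(k+1)d}(Z)/\pi_{kd}(Z)$. Dividing by $\pi_{(k+1)d}(Z)$, which is invertible in $\F_{q^d}(\theta,\beta)[\![Z]\!]$ since $v_Z(\pi_m(Z))=0$, gives
$$\frac{G_i(Z^{q^{(k+1)d}})}{\pi_{(k+1)d}(Z)} = \frac{G_i(Z^{q^{kd}})}{\pi_{kd}(Z)} - \frac{M_i(Z^{q^{kd}})}{\pi_{kd}(Z)}.$$
The induction hypothesis then yields the formula at rank $k+1$. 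Equivalently, one can telescope these one-step relations directly.

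For the limit, by the identity just proved it suffices to show that $v_Z\bigl(G_i(Z^{q^{kd}})/\pi_{kd}(Z)\bigr)\to\infty$. The proof of the preceding proposition gives $v_Z(G_i(Z))\ge 1$, since every term has $v_Z \ge q^k\ge1$. Hence $v_Z(G_i(Z^{q^{kd}})) = q^{kd}\,v_Z(G_i(Z)) \ge q^{kd}$. Since $\pi_{kd}(Z)$ is a unit in $\F_{q^d}(\theta,\beta)[\![Z]\!]$ (its constant term is $\beta^{-kd}\neq 0$ because $\mathbf{l}(0)=0$), we get $v_Z(1/\pi_{kd}(Z))=0$. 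Therefore the valuation of the remainder is at least $q^{kd}\to\infty$, and the partial sums converge $Z$-adically to $G_i(Z)$.

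There is no real obstacle here. The only point requiring care is that the composition $Z\mapsto Z^{q^{kd}}$ is well defined on formal power series and commutes with the infinite sum defining $G_i$, and that the $\pi_m(Z)$ are units; both are immediate from the $v_Z$-estimates already established in the proposition.
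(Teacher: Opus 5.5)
Your proof is correct and follows essentially the same route as the paper: an induction on $k$ combining \eqref{Gi} evaluated at $Z^{q^{kd}}$ with \eqref{pikd}, then the observation that $v_Z\bigl(G_i(Z^{q^{kd}})/\pi_{kd}(Z)\bigr)\geq q^{kd}$ because $v_Z(G_i)\geq 1$ and $\pi_{kd}(Z)$ is a unit. Your explicit base case $k=0$ and the justification $\pi_{kd}(0)=\beta^{-kd}\neq 0$ (from $\mathbf{l}(0)=0$) are details the paper leaves implicit.
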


\begin{proof}
Let $i \in [\![1,n]\!]$. The first relation can be proved by induction:

$\frac{G_i(Z^{q^{(k+1)d}})}{\pi_{(k+1)d}(Z)}
= \frac{\pi_d(Z^{q^{kd}})(G_i(Z^{q^{kd}})-M_i(Z^{q^{kd}}))}{\pi_{(k+1)d}(Z)}$ (by \eqref{Gi})

$= \frac{ G_i(Z^{q^{kd}}) - M_i(Z^{q^{kd}})}{\pi_{kd}(Z)}$ (by \eqref{pikd})

$= G_i(Z) - \sum_{j=0}^{k-1} \frac{M_i(Z^{q^{jd}})}{\pi_{jd}(Z)} - \frac{M_i(Z^{q^{kd}})}{\pi_{kd}(Z)}$ (by induction)

$= G_i(Z) - \sum_{j=0}^{k} \frac{M_i(Z^{q^{jd}})}{\pi_{jd}(Z)}.$\\ 

Let us now prove the second relation. We have $v_Z(G_i(Z)) \geq 1$, hence $v_Z(G_i(Z^{q^{kd}})) \geq q^{kd}$. Moreover we have $v_Z( \pi_{kd}(Z))= 0$. 
The sequence $\Big( \frac{G_i(Z^{q^{kd}})}{ \pi_{kd}(Z)} \Big)_k$ tends to 0. The recurrence relation previously proved thus provides the desired limit. 
\end{proof}

\subsection{Mahler system}\text{ }

The equations \eqref{Gi} and \eqref{G0} define a $q^d$-Mahler system:\\
Let us define the diagonal matrix $$A(z) = \begin{pmatrix}
    \beta^d\pi_d(z) & 0 & \ldots & 0\\
    0 & \pi_d(z) & & \vdots\\
    \vdots & & \ddots & 0\\
    0 & \ldots & 0 & \pi_d(z)
\end{pmatrix}$$
and the column vecto $$B(z) = \begin{pmatrix}
0\\
\pi_d(z) M_1(z) \\
\vdots \\
\pi_d(z) M_n(z)
\end{pmatrix}.$$
We then have the following system:
$$\begin{pmatrix}
    G_0(z^{q^{d}})\\ G_1(z^{q^{d}})\\ \vdots \\ G_n(z^{q^{d}})
\end{pmatrix} = A(z) \begin{pmatrix}
   G_0(z)\\ G_1(z)\\ \vdots \\ G_n(z)
\end{pmatrix} + B(z).$$ 
By letting $\Tilde{A}$ be the following matrix:

$$\Tilde{A}(z) = \begin{pmatrix}
    \beta^d \pi_d(z) & 0 & \ldots & 0 & 0\\
    0 & \pi_d(z) & & \vdots &\pi_d(z)M_1(z)\\
    \vdots & & \ddots & \vdots & \vdots\\
    \vdots & & & \pi_d(z) & \pi_d(z) M_n(z)\\
    0 & \ldots & \ldots & 0 & 1
\end{pmatrix},$$

we thus obtain the classical form of a Mahler system:

$$\begin{pmatrix}
    G_0(z^{q^{d}})\\ G_1(z^{q^{d}})\\ \vdots \\ G_n(z^{q^{d}})
\end{pmatrix} = \Tilde{A}(z) \begin{pmatrix}
   G_0(z)\\ G_1(z)\\ \vdots \\ G_n(z)
\end{pmatrix}.$$ 
The matrix $\Tilde{A}$ is invertible, whose inverse is:
$$\Tilde{A}^{-1}(z) = \begin{pmatrix}
    \beta^{-d}\pi_d^{-1}(z) & 0 & \ldots & 0 & 0\\
    0 & \pi_d^{-1}(z) & & \vdots & -M_1(z)\\
    \vdots & & \ddots & \vdots & \vdots\\
    \vdots & & & \pi_d^{-1}(z) & - M_n(z)\\
    0 & \ldots & \ldots & 0 & 1
\end{pmatrix}.$$

The series $G_i(Z) \in \F_{q^d}(\theta,\beta)[\![Z]\!]$ are entires for the valuation $v_\infty$ on $D_\infty(\frac{1}{qe})$. Considering the element $u$ defined above, we have:\\
• $0< |u|_\infty  < 1$;\\
• $\pi_d(u^{q^{jd}}) = \beta^{-d} \prod_{k=jd+1}^{(j+1)d} (1 - \theta^{1-q^k})^s \in k(\beta)\backslash\{0\}$;\\
• $M_i(u^{q^{jd}}) = \frac{\pi_{jd}^s}{\beta^{jd}} \sum_{k=jd}^{(j+1)d-1} \frac{\beta^k}{\pi_k^s} \left(\frac{u_i}{\theta\Tilde{\theta}}\right)^{q^k} \in k(\beta)$;\\
Thus, for every integer $j \in \N$, the elements $u^{q^{jd}}$ are neither poles of the entries of $\Tilde{A}(z)$, nor of those of $\Tilde{A}^{-1}(z)$. The matrices $\Tilde{A}(u^{q^{jd}})$ and $\Tilde{A}^{-1}(u^{q^{jd}})$ are therefore well-defined for every integer $j \in \N$.

Moreover, the functions $\mathbf{l}_i(z)$ and $\mathbf{l}(z)$ are algebraic over $k(z)$ by construction, which shows that all the entries of the matrix $\Tilde{A}(z)$ are as well.
We thus obtain from \color{blue}theorem \ref{Adam_Denis} \color{black} the following result:
\begin{prop}{\label{Nishioka G_i}}
    The family $\big( G_i(Z)\big)_{i=0}^n$ is $\overline{k(Z)}$-algebraicly independent if and only if the family $\big( G_i(u)\big)_{i=0}^n$ si $\overline{k}$-algebraicly independent.
\end{prop}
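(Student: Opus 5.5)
The plan is to apply Theorem \ref{Adam_Denis} to the vector $F(z) = (G_0(z),\dots,G_n(z),1)^T$, which satisfies the $q^d$-Mahler system $F(z^{q^d}) = \Tilde{A}(z)F(z)$ at the point $\alpha = u$. Theorem \ref{Adam_Denis} then gives
$$\degtr_k\big(G_0(u),\dots,G_n(u),1\big) = \degtr_{k(Z)}\big(G_0(Z),\dots,G_n(Z),1\big).$$
The constant coordinate $1$ contributes nothing to either transcendence degree. Hence both sides equal $n+1$ simultaneously, and this is exactly the claimed equivalence. Algebraic independence over $\overline{k}$, resp. over $\overline{k(Z)}$, is the same as over $k$, resp. $k(Z)$, since algebraic extensions do not change transcendence degree.

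So the work consists in checking the hypotheses of Theorem \ref{Adam_Denis}, in this order.

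First, the coefficient field. The $G_i$ have coefficients in $\F_{q^d}(\theta,\beta)$, and $\beta \in \overline{k}^*$. So $\K := k(\beta)\F_{q^d}$ is a finite extension of $k$ contained in $\overline{k}$, and all $G_i$, as well as the constant $1$, lie in $\K[\![Z]\!]$.

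Second, analyticity. By the preceding proposition, each $G_i$ is an entire series on $D_\infty(\frac{1}{qe})$, hence $v_\infty$-adically analytic near $0$.

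Third, the entries of $\Tilde{A}$. These are built from $\beta$, $\theta$, $\mathbf{l}(z)^{q^i}$ and $\mathbf{l}_i(z)^{q^k}$ by ring operations and inversion of $\pi_d$. The series $\mathbf{l}$ and $\mathbf{l}_i$ lie in $\F_{q^d}[\![Z]\!]$, and I will argue they are algebraic over $\F_{q^d}(Z)$. This is the point where I expect the main obstacle. The text only says they are "algebraic by construction", and $\F_{q^d}[\![Z]\!]$ power series are generally transcendental, so one must use the specific way they were built. My first attempt: $u$ is algebraic over $k = \F_q(\theta)$, so there is a polynomial relation $P(u,1/\theta)=0$ over $\F_q$. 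Since $\F_{q^d}[\![u]\!]$ is the completion and $u\mapsto Z$ is an isomorphism of complete fields $\F_{q^d}(\!(u)\!) \cong \F_{q^d}(\!(Z)\!)$, the relation transports to $P(Z,\mathbf{l}(Z))=0$. This works provided $1/\theta$ is algebraic over $\F_q(u)$, i.e. $u$ is transcendental over $\F_q$, which holds since $v_\infty(u)>0$. The same argument applies to $\mathbf{l}_i$, since $u_i/(\theta\Tilde\theta)$ is algebraic over $k$, hence over $\F_q(u)$. It follows that the entries of $\Tilde{A}$ and $\Tilde{A}^{-1}$ belong to $\K(H)$, with $H=\overline{\F_q}[\![Z]\!]\cap\overline{\F_q(Z)}$. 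Moreover $\det\Tilde{A} = \beta^d\pi_d^{n+1} \neq 0$, because $\pi_d(Z)$ has constant term $\beta^{-d}$.

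Fourth, the point. We have $v_\infty(u)>0$ and $u\neq 0$, and $u$ lies in the disc of convergence because $v_\infty(u)=1/e>1/(qe)$. By the computations just above, $\Tilde A(u^{q^{jd}})$ and $\Tilde A^{-1}(u^{q^{jd}})$ are well defined for all $j$: $\pi_d(u^{q^{jd}})$ is a nonzero element of $k(\beta)$ since $1-\theta^{1-q^k}\ne0$, and $M_i(u^{q^{jd}})\in k(\beta)$. Evaluation is compatible with substitution because $\mathbf{l}(u^{q^{jd}}) = \mathbf{l}(u)^{q^{jd}}$, the coefficients lying in $\F_{q^d}$, whose Frobenius $q^{jd}$ is trivial.

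Once these four hypotheses are checked, Theorem \ref{Adam_Denis} applies and the equality of transcendence degrees concludes the proof.
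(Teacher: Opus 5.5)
Your proposal is correct and follows the paper's own route: apply Theorem \ref{Adam_Denis} to the system $F(z^{q^d})=\Tilde{A}(z)F(z)$ at $\alpha=u$, after checking that $\Tilde{A}(u^{q^{jd}})$ is well defined and invertible and that $\mathbf{l},\mathbf{l}_i$ are algebraic (the paper proves the latter by your argument in a later lemma). You are also right to append the constant coordinate $1$ to the vector, which the paper's displayed system omits; the only slip, copied from the paper, is that $M_i(u^{q^{jd}})$ involves powers of $u_i/(\theta\Tilde{\theta})$ and so lies in $k(\beta,\Tilde{\theta},u_i)$ rather than $k(\beta)$, which is harmless since only its finiteness is used.
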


It thus remains to show that the linear independence of the values implies the algebraic independence of the functions.

\subsection{Preliminary constructions}\text{ }

\begin{defn}
Let $\mathbf{B} := \bigcup_{h \in \N} \overline{\F_{q}}(\theta^{1/p^h},\beta^{1/p^h})$ be the purely inseparable closure of $\overline{\F_q}(\theta,\beta)$ in $\C_\infty$, and $\mathbf{U}_Z := \bigcup_{k \in \N} \mathbf{B}(\!(Z^{1/p^k})\!)$. Let $\mathbf{u}_Z$ denote the algebraic closure of $\mathbf{B}(Z)$ in $\mathbf{U}_Z$.
Every element $f \in \mathbf{U}_Z$ admits a unique representation $f = \sum_{i \geq i_0} h_i Z^{i/p^k}$ where $h_i \in \mathbf{B}$, $h_{i_0} \neq 0$. We extend the valuation with respect to $Z$ to $\mathbf{U}_Z$ by $v_Z(f) := i_0/p^k$. 

\end{defn}

Note that these spaces were defined so as to be stable under taking the $p$-th rooth.

\begin{lm}{\label{l,l_i... algebrique}}
The elements $\mathbf{l}(Z)$, $\pi_k(Z)$, $\mathbf{l}_i(Z)$ and $M_i(Z)$ belong to $\mathbf{u}_Z$.
\end{lm}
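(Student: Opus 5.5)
We have to show that $\mathbf{l}(Z)$, $\pi_k(Z)$, $\mathbf{l}_i(Z)$ and $M_i(Z)$ lie in $\mathbf{U}_Z$ and are algebraic over $\mathbf{B}(Z)$. Membership in $\mathbf{U}_Z$ is immediate. We have $\mathbf{l}(Z),\mathbf{l}_i(Z)\in \F_{q^d}[\![Z]\!]\subset \mathbf{B}[\![Z]\!]$. Each $\pi_k(Z)$ is a polynomial in $\mathbf{l}(Z)$ with coefficients in $\F_q(\theta,\beta)\subset\mathbf{B}$. Its constant term is $\beta^{-k}\neq 0$, because $v_Z(\mathbf{l})\geq 1$, so $\pi_k(Z)$ is a unit of $\mathbf{B}[\![Z]\!]$. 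Hence $M_i(Z)$, a finite sum of products of $\mathbf{l}_i^{q^k}$ with $\pi_k^{-1}$, also lies in $\mathbf{B}[\![Z]\!]\subset\mathbf{U}_Z$. Since $\mathbf{u}_Z$ is the algebraic closure of $\mathbf{B}(Z)$ inside $\mathbf{U}_Z$, it is a field. So it suffices to prove that $\mathbf{l}(Z)$ and $\mathbf{l}_i(Z)$ are algebraic over $\mathbf{B}(Z)$; the statements for $\pi_k$ and $M_i$ then follow from closure under sums, products and inverses.

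The key step is the algebraicity of $\mathbf{l}$ and $\mathbf{l}_i$. I would argue as follows. The element $u$ is algebraic over $k$, and $1/\theta$ and $u_i/(\theta\Tilde\theta)$ lie in $K=k(\Tilde\theta,u_1,\dots,u_n)$, which is a finite extension of $k$ containing $u$. Each element $y$ among $1/\theta$ and $u_i/(\theta\Tilde\theta)$ is algebraic over $\F_q(u)$, since $k(u)\subset K$ is finite over $k$ and $k$ is algebraic over $\F_q(u)$. Choose a nonzero $P(Z,Y)\in\F_q[Z,Y]$ with $P(u,y)=0$, and consider the formal series $P(Z,\mathbf{l}_\bullet(Z))\in\F_{q^d}[\![Z]\!]$, where $\mathbf{l}_\bullet$ is the series attached to $y$. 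Its value at $Z=u$ is $0$.

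The main obstacle is to conclude that this series vanishes identically. The tool is the identification $\O_\infty=\F_{q^d}[\![u]\!]$: the map $\F_{q^d}[\![Z]\!]\to\O_\infty$, $f\mapsto f(u)$, is injective, because every element of $\O_\infty$ has a unique $u$-adic expansion with coefficients in the Teichmüller-type set of representatives $\F_{q^d}$. This map is also a ring homomorphism commuting with the polynomial operations involved. Therefore $P(Z,\mathbf{l}_\bullet(Z))=0$ identically. If $P(Z,Y)$ has positive degree in $Y$, this exhibits $\mathbf{l}_\bullet$ as algebraic over $\F_q(Z)\subset\mathbf{B}(Z)$. If instead $P$ does not involve $Y$, then it is a nonzero polynomial in $Z$ alone vanishing at $u$. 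Since $u$ is transcendental over $\F_q$ (it has positive valuation and is nonzero), this cannot happen, so after removing such factors we may take $P$ with positive $Y$-degree.

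Combining the two parts, $\mathbf{l}$ and $\mathbf{l}_i$ lie in $\mathbf{u}_Z$, and hence so do $\pi_k$ and $M_i$.
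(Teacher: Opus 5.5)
Your proof is correct and takes essentially the paper's route. As in the paper, you first show that $1/\theta$ and $\frac{1}{\theta\Tilde{\theta}}u_i$ are algebraic over $\F_q(u)$, transfer this to $\mathbf{l}(Z)$ and $\mathbf{l}_i(Z)$ over $\F_q(Z)$, and then get $\pi_k$ and $M_i$ because $\mathbf{u}_Z$ is a field containing $\mathbf{B}$. Your explicit argument for the transfer step — that the evaluation $f\mapsto f(u)$ is an injective ring homomorphism on $\F_{q^d}[\![Z]\!]$ — supplies the justification the paper leaves implicit when it attributes this deduction only to the transcendence of $u$ over $\F_q$.
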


\begin{proof}
The element $u$ is algebraic over $\F_q(1/\theta)$, so $\mathbf{l}(u) = 1/\theta$ is algebraic over $\F_q(u)$. Because $u$ is transcendental over $\F_q$, we deduce that $\mathbf{l}(Z)$ is algebraic over $\F_q(Z)$, and so is $\pi_k(Z)$, because $\beta \in \mathbf{B}$.
Similarly, $\mathbf{l}_i(Z)$ is algebraic over $\F_q(Z)$, and $M_i(Z)$ is algebraic over $\F_{q^d}(\theta)(Z)$.    
\end{proof}

\begin{lm}{\label{prolongement derivation}}
    The map 
$$\begin{array}{ccccc}
\mathbf{D} & : & \overline{\F_q}(\theta)(\!(Z)\!) & \to & \overline{\F_q}(\theta)(\!(Z)\!)  \\
 & & \sum_{i \geq m} a_i Z^i & \mapsto & \sum_{i \geq m} a_i' Z^i
\end{array}$$
 is a derivation extending the usual derivation on $\overline{\F_q}(\theta)$ and stabilizing the set $\overline{\F_q}(\theta)(\!(Z)\!) \cap \mathbf{u}_Z$.
\end{lm}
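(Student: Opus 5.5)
The plan is to check the three assertions in order: $\mathbf{D}$ is a derivation, it extends $\frac{d}{d\theta}$, and it preserves algebraicity. The first two are formal; the third is the real content.

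\emph{Derivation and extension.} Additivity of $\mathbf{D}$ is immediate, so only the Leibniz rule needs an argument. For $f=\sum a_iZ^i$ and $g=\sum b_jZ^j$, the coefficient of $Z^n$ in $fg$ is the finite sum $c_n=\sum_{i+j=n}a_ib_j$. The Leibniz rule for $'$ on $\overline{\F_q}(\theta)$ gives $c_n'=\sum_{i+j=n}(a_i'b_j+a_ib_j')$, and this is exactly the coefficient of $Z^n$ in $\mathbf{D}(f)g+f\mathbf{D}(g)$. On constant series $\mathbf{D}$ is $'$ itself, so it extends the usual derivation; moreover $\mathbf{D}(Z)=0$.

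\emph{Stability.} It is convenient to phrase $\mathbf{D}$ through dual numbers. Let $\sigma$ be the ring morphism $\overline{\F_q}(\theta)(\!(Z)\!)\to (\overline{\F_q}(\theta)[\varepsilon]/(\varepsilon^2))(\!(Z)\!)$ that substitutes $\theta\mapsto\theta+\varepsilon$ in each coefficient. Then $\sigma(f)=f+\varepsilon\,\mathbf{D}(f)$.

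First I reduce to algebraicity over $K_0:=\overline{\F_q}(\theta)(Z)$. Take $f\in\overline{\F_q}(\theta)(\!(Z)\!)$ algebraic over $\mathbf{B}(Z)$. The field $\mathbf{B}$ is algebraic over $\overline{\F_q}(\theta)$, because $\beta\in\overline{k}$. Hence $f$ is algebraic over $K_0$, and conversely $K_0\subset\mathbf{B}(Z)$. So it suffices to show that $\mathbf{D}(f)$ is algebraic over $K_0$.

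\emph{Separable case.} Suppose the minimal polynomial $P\in K_0[X]$ of $f$ is separable. Apply $\sigma$ to $P(f)=0$ and write $P^{\mathbf{D}}$ for $P$ with $\mathbf{D}$ applied to its coefficients. This gives
\[
P(f)+\varepsilon\bigl(P^{\mathbf{D}}(f)+P'(f)\,\mathbf{D}(f)\bigr)=0 .
\]
Since $P'(f)\neq0$, we get $\mathbf{D}(f)=-P^{\mathbf{D}}(f)/P'(f)\in K_0(f)$. Note that $\mathbf{D}$ stabilizes $K_0$, since it acts as $\partial/\partial\theta$ there, so the coefficients of $P^{\mathbf{D}}$ lie in $K_0$.

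\emph{Inseparable case.} This is the main obstacle, because $\mathbf{D}$ kills $p$-th powers, so $\mathbf{D}(f^{p^m})=0$ carries no information about $\mathbf{D}(f)$. I would use $p$-bases. The pair $\{\theta,Z\}$ is a $p$-basis of $\overline{\F_q}(\theta)(\!(Z)\!)$ over its subfield of $p$-th powers. Hence $f$ can be written uniquely as
\[
f=\sum_{0\le j,l<p}\theta^jZ^l\,h_{jl}^p,\qquad h_{jl}\in\overline{\F_q}(\theta)(\!(Z)\!),
\]
where the coefficients are split using the $p$-basis $\{\theta\}$ of $\overline{\F_q}(\theta)$ over $\overline{\F_q}(\theta)^p$, and the exponents of $Z$ are split by residue mod $p$. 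Applying $\mathbf{D}$ then gives
\[
\mathbf{D}(f)=\sum_{j,l} j\,\theta^{j-1}Z^l\,h_{jl}^p .
\]
So it suffices to show that each $h_{jl}$ is algebraic over $K_0$. The field $L=K_0(f)$ is a finite extension of a field of transcendence degree $2$ over the perfect field $\overline{\F_q}$, so $[L:L^p]=p^2$. I would try to show that $\{\theta,Z\}$ is still a $p$-basis of $L$, i.e.\ $L=L^p(\theta,Z)$. Granting this, $f=\sum\theta^jZ^la_{jl}^p$ with $a_{jl}\in L$, and uniqueness of the decomposition in the Laurent field forces $h_{jl}=a_{jl}$, which are algebraic.

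To prove $L=L^p(\theta,Z)$, I would write $f^{p^m}=g$ with $g$ separable over $K_0$. I would then argue by induction on $m$ that $L\subset\overline{\F_q}(\theta)(\!(Z)\!)$ cannot contain a $p$-th root of an element of $L^p(\theta,Z)$ lying outside $L^p(\theta,Z)$. The reason is that such a root would have to be a $p$-th root of a combination of $\theta$ and $Z$, and that is not a Laurent series in $Z$ with coefficients in $\overline{\F_q}(\theta)$. This last point is the delicate step. If it resists, my fallback is to replace $K_0$ by $\mathbf{B}(Z)$ and use the stability of $\mathbf{U}_Z$ and $\mathbf{u}_Z$ under $p$-th roots, which was noted right after their definition: in that setting the $h_{jl}$ lie in $\mathbf{U}_Z$ and are visibly algebraic, since $h_{jl}^p$ is obtained from the conjugates of $f$ by a linear algebra formula over $\mathbf{B}(Z)$.
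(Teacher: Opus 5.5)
Your Leibniz-rule check, your reduction to algebraicity over $K_0=\overline{\F_q}(\theta)(Z)$, and your separable case coincide with the paper's proof. The gap is in the inseparable case. Everything there rests on $L=L^p(\theta,Z)$ for $L=K_0(f)$, and you do not prove it.

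\begin{itemize}
\item The proposed induction on $m$ only restates the claim, since every $x\in L$ already satisfies $x^p\in L^p\subset L^p(\theta,Z)$.
\item The remark about $p$-th roots of combinations of $\theta$ and $Z$ is never made precise.
\item The fallback does not get around the problem. Knowing $f^{1/p}\in\mathbf{u}_Z$ tells you nothing directly about its coordinates $h_{jl}$ along the $\theta^{j/p}Z^{l/p}$. In the purely inseparable situation you are worried about, $f$ has no distinct conjugates from which to extract them by linear algebra. Deciding whether the $h_{jl}$ are algebraic is the same question as before.
\end{itemize}

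The missing step follows in one line from what you already have. You showed that $\{\theta,Z\}$ is a $p$-basis of $F=\overline{\F_q}(\theta)(\!(Z)\!)$, and $p$-independence descends to subfields. Indeed, a relation $\theta\in L^p(Z)$ or $Z\in L^p(\theta)$ would give $\theta\in F^p(Z)$ or $Z\in F^p(\theta)$, because $L^p\subset F^p$. Combined with $[L:L^p]=p^2$, this gives $L=L^p(\theta,Z)=L^pK_0$.

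That equality is exactly the criterion for the finite extension $L/K_0$ to be separable, so your inseparable case never occurs. This is the fact the paper uses without justification: it asserts that $F/K_0$ is separable and then performs your separable-case computation. Completed this way, your $p$-basis argument supplies the justification the paper omits. It also makes the case split unnecessary: once $h_{jl}\in L$, your formula $\mathbf{D}(f)=\sum_{j,l} j\,\theta^{j-1}Z^l h_{jl}^p$ already shows $\mathbf{D}(f)\in L$.
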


\begin{proof}
The map $\mathbf{D}$ is linear. Let $aZ^i$ and $bZ^j$ be two monomials in $\overline{\F_q}(\theta)(\!(Z)\!)$. We have $\mathbf{D}(aZ^ibZ^j) = \mathbf{D}(aZ^i)bZ^j + aZ^i\mathbf{D}(bZ^j)$. It therefore remains to prove that the set is stable.

Let $a \in \overline{\F_q}(\theta)(\!(Z)\!) \cap \mathbf{u}_Z$. We note $P_a(Y) = \sum_{i=0}^n a_iX^i \in \overline{\F_q}(\theta)(Z)[Y]$ its minimal polynomial in $\overline{\F_q}(\theta)(Z)$. Let us also denote $P_a^{(\mathbf{D})}(X) := \sum_{i=0}^n \mathbf{D}(a_i) X^i$.
We then have 
$$0 = \mathbf{D}(P_a(\alpha)) = P_a^{(\mathbf{D})}(a) + \frac{\partial P_a}{\partial X}(a) \mathbf{D}(a).$$
The extension $\overline{\F_q}(\theta)(\!(Z)\!)/ \overline{\F_q}(\theta)(Z)$ being separable, we have $\frac{\partial P_a}{\partial Y}(a) \neq 0$. 
Thus, we get
$$\mathbf{D}(a) = - \frac{P_a^{(\mathbf{D})}(a)}{\frac{\partial P_a}{\partial Y}(a)}.$$
It follows that $\mathbf{D}(a) \in \overline{\F_q}(\theta)(\!(Z)\!) \cap \mathbf{u}_Z$.  
\end{proof}

\begin{lm}{\label{U_z sigma stable}}
Assume that $\beta \in \bigcup_{h \in \N} \overline{\F_{q}}(\theta^{1/p^h})$ and consider the endomorphism $\sigma$ defined as follows:
$$\begin{array}{ccccccc}
\sigma & : & \mathbf{U}_Z & \to & \mathbf{U}_Z\\
 & & x \in \overline{\F_q} & \mapsto & x^q\\
 & & \theta & \mapsto & \theta\\
 & & Z & \mapsto & Z^q.
\end{array}$$
This endomorphism stabilizes $\mathbf{u}_Z$ \color{black}.  
\end{lm}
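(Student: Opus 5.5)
Since $\mathbf{u}_Z$ is the algebraic closure of $\mathbf{B}(Z)$ inside $\mathbf{U}_Z$, the plan is to show two things. First, $\sigma$ is a well-defined field endomorphism of $\mathbf{U}_Z$. Second, $\sigma$ maps $\mathbf{B}(Z)$ into $\mathbf{B}(Z)$. The conclusion then follows from a standard observation. If $f \in \mathbf{u}_Z$ satisfies a nonzero polynomial $P(Y) = \sum_j c_j Y^j$ with $c_j \in \mathbf{B}(Z)$, then applying the ring homomorphism $\sigma$ gives $\sum_j \sigma(c_j)\sigma(f)^j = 0$. Here the $\sigma(c_j)$ lie in $\mathbf{B}(Z)$ and are not all zero, because a field homomorphism is injective. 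Hence $\sigma(f)$ is algebraic over $\mathbf{B}(Z)$ and lies in $\mathbf{U}_Z$, so $\sigma(f) \in \mathbf{u}_Z$.

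\textbf{Defining $\sigma$ on $\mathbf{B}$.} On $\overline{\F_q}(\theta)$, $\sigma$ is the field automorphism that fixes $\theta$ and acts by Frobenius $x\mapsto x^q$ on constants; it is an automorphism because Frobenius is an automorphism of $\overline{\F_q}$. It extends uniquely to the purely inseparable closure $\bigcup_h \overline{\F_q}(\theta^{1/p^h})$: set $\sigma(y) := \sigma(y^{p^h})^{1/p^h}$, which makes sense because $p$-th roots are unique in characteristic $p$. In particular $\sigma(\theta^{1/p^h}) = \theta^{1/p^h}$.

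This is exactly where the hypothesis on $\beta$ is used. Since $\beta \in \bigcup_h \overline{\F_q}(\theta^{1/p^h})$, we have $\overline{\F_q}(\theta^{1/p^h},\beta^{1/p^h}) \subset \overline{\F_q}(\theta^{1/p^{h'}})$ for $h'$ large. Therefore $\mathbf{B} = \bigcup_h \overline{\F_q}(\theta^{1/p^h})$, and $\sigma$ is a well-defined automorphism of $\mathbf{B}$. For a general $\beta$ there would be no canonical choice of $\sigma(\beta)$, and this is the main point to check.

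\textbf{Extending to $\mathbf{U}_Z$.} Write $f = \sum_{i\ge i_0} h_i Z^{i/p^k} \in \mathbf{B}(\!(Z^{1/p^k})\!)$ and set $\sigma(f) := \sum_i \sigma(h_i) Z^{qi/p^k}$. This is again a Laurent series in $Z^{1/p^k}$ with coefficients in $\mathbf{B}$, and the definition is compatible with the inclusions $\mathbf{B}(\!(Z^{1/p^k})\!)\subset \mathbf{B}(\!(Z^{1/p^{k+1}})\!)$. It is additive, and it is multiplicative because $\sigma$ is multiplicative on $\mathbf{B}$ and the Cauchy product formula is preserved under $Z^{1/p^k}\mapsto Z^{q/p^k}$. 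The map is nonzero on $1$, so it is a field endomorphism. Finally, $\sigma(\mathbf{B}(Z)) = \mathbf{B}(Z^q) \subset \mathbf{B}(Z)$, which supplies the second ingredient and completes the argument of the first paragraph.

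The only delicate step is identifying $\mathbf{B}$ with $\bigcup_h\overline{\F_q}(\theta^{1/p^h})$ under the hypothesis on $\beta$. The remaining verifications are routine.
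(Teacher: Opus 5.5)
Your proposal is correct and follows the paper's argument: apply $\sigma$ coefficientwise to an annihilating polynomial of $f$ over $\mathbf{B}(Z)$, using that $\sigma$ maps $\mathbf{B}(Z)$ into itself. You also check two points the paper leaves implicit: that $\sigma$ is a well-defined injective field endomorphism of $\mathbf{U}_Z$, and that the hypothesis on $\beta$ gives $\mathbf{B}=\bigcup_h\overline{\F_q}(\theta^{1/p^h})$, so $\sigma$ is actually determined on $\mathbf{B}$.
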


\begin{proof}
    Let $a \in \mathbf{u}_Z$ and $P_a(Y) \in \mathbf{B}(Z)[Y]$ be an annihilating polynomial of $a$. Because the endomorphism $\sigma$ stabilizes $\mathbf{B}(Z)$, it suffices to apply it to each coefficient of $P_a(Y)$ to get an annihilating polynomial of $\sigma(a)$ with coefficients in $\mathbf{B}(Z)$. \end{proof}

\begin{lm}{\label{lemme annulation eq fonct alg}}
Let $f \in \mathbf{u}_Z$, $P \in \mathbf{B}[\![Z]\!] \cap \mathbf{u}_Z$ and $r$ be a positive integer. We assume that $f$ satisfies the following functional equation:
\begin{equation}
    f(Z^{q^r}) = P(Z)f(Z).
    \label{2.8}
\end{equation}
Assume that $P$ has a pole not belonging to $\overline{\F_q}$, which is a zero of a certain $Q(Z) \in \mathbf{B}[\![Z]\!] \cap \mathbf{u}_Z$, such that for all positive integer $k$:\\
1. $P$ has no poles at the zeros of $Q(Z^{q^{kr}})$;\\
2. $P$ has no zeros at the zeros of $Q(Z^{q^{-kr}})$;\\
3. for all $l \in \N$, the zeros of $Q(Z^{q^{kr}})$ and $Q(Z^{q^{lr}})$ are distincts.\\
Then $f =0$.

\end{lm}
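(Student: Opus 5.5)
We argue by contradiction: assume $f \neq 0$ and let $\alpha$ be a zero of $Q$ that is a pole of $P$, with $\alpha \notin \overline{\F_q}$. The idea is to propagate this pole along the orbit of $\alpha$ under $Z \mapsto Z^{q^{-r}}$ and obtain infinitely many poles of $f$. Since $f \in \mathbf{u}_Z$ is algebraic over $\mathbf{B}(Z)$, it can have only finitely many poles, which gives the contradiction. We work in an algebraic closure of $\mathbf{B}$, or in $\C_\infty$. We regard $f$, $P$, $Q$ as algebraic functions, so ``poles'' and ``zeros'' refer to the places of the function field $\mathbf{B}(Z,f,P,Q)$ above points $Z=\gamma$. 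This lets us evaluate the identity \eqref{2.8} place by place through orders of vanishing.

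\textbf{Step 1 (one step of propagation).} Write \eqref{2.8} as $f(Z) = f(Z^{q^r})/P(Z)$, and equivalently as $f(Z^{q^{r}}) = P(Z) f(Z)$. Let $\alpha_k$ be a $q^{kr}$-th root of $\alpha$, so that $\alpha_k$ is a zero of $Q(Z^{q^{kr}})$; we may choose these compatibly, with $\alpha_{k+1}^{q^r}=\alpha_k$. Since $\alpha\notin\overline{\F_q}$, the points $\alpha_k$ are not roots of unity and are distinct. Hypothesis 3 keeps the different levels from interfering. Evaluating the orders at $Z=\alpha_{k+1}$ gives
\[
\mathrm{ord}_{\alpha_{k+1}} f(Z^{q^r}) = \mathrm{ord}_{\alpha_{k+1}} P + \mathrm{ord}_{\alpha_{k+1}} f .
\]
By hypothesis 1, $P$ has no pole at $\alpha_{k+1}$, since $\alpha_{k+1}$ is a zero of $Q(Z^{q^{(k+1)r}})$; so $\mathrm{ord}_{\alpha_{k+1}} P \ge 0$. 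Moreover, because $Z\mapsto Z^{q^r}$ is purely inseparable, $\mathrm{ord}_{\alpha_{k+1}} f(Z^{q^r}) = q^r\,\mathrm{ord}_{\alpha_k} f$, up to the fixed normalisation of the ramification indices. Hence if $\mathrm{ord}_{\alpha_k} f<0$, then $\mathrm{ord}_{\alpha_{k+1}} f<0$ as well.

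\textbf{Step 2 (base case and induction).} To start the chain we need $f$ to have a pole at some $\alpha_k$. Evaluate at $Z=\alpha$ itself:
\[
\mathrm{ord}_{\alpha} f(Z^{q^r}) = \mathrm{ord}_\alpha P + \mathrm{ord}_\alpha f ,
\]
with $\mathrm{ord}_\alpha P<0$. Either $\mathrm{ord}_\alpha f <0$, and we are done, or $\mathrm{ord}_{\alpha} f(Z^{q^r}) < \mathrm{ord}_\alpha f$. In the second case we look forward along $\alpha^{q^r},\alpha^{q^{2r}},\dots$, which are zeros of $Q(Z^{q^{-kr}})$. By hypothesis 2, $P$ has no zeros there, and iterating $f(Z^{q^r})=Pf$ yields strictly decreasing normalised orders of $f$ at these points, eventually a pole. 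To rule out a cycle, which would make the orders cycle as well, we use that $\alpha$ is not a root of unity, together with hypothesis 3. Either way we obtain a pole of $f$ at some point of the orbit. Step 1 then propagates it to infinitely many distinct points $\alpha_k$, each a pole of $f$.

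\textbf{Step 3 (contradiction).} An element of $\mathbf{u}_Z$ satisfies a nonzero polynomial over $\mathbf{B}(Z)$, so its poles lie above the finitely many zeros of the leading coefficient and the poles of the other coefficients. This contradicts Step 2, so $f=0$.

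The main obstacle is Step 2, making the orbit bookkeeping rigorous. The pole of $P$ at $\alpha$ has to be shown to produce a genuine pole of $f$, rather than being absorbed by a zero of $f$. Doing so requires tracking orders in both directions of the orbit and justifying the scaling $\mathrm{ord}_{\gamma}f(Z^{q^r}) = q^r\mathrm{ord}_{\gamma^{q^r}}f$ for algebraic, possibly ramified, functions in characteristic $p$; hypotheses 1--3 are exactly what makes the two directions not interfere.
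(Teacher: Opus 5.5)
\textbf{There is a genuine gap in Step 2, and it cannot be closed: the statement as written is false.}

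The problem is the case where $f$ vanishes at $\alpha$ to order at least $-\mathrm{ord}_\alpha P$, so that the zero of $f$ absorbs the pole of $P$. Then $q^r\,\mathrm{ord}_{\alpha^{q^r}}f=\mathrm{ord}_\alpha P+\mathrm{ord}_\alpha f\ge 0$. For $k\ge1$, hypothesis 2 only gives $q^r\,\mathrm{ord}_{\alpha^{q^{(k+1)r}}}f\le \mathrm{ord}_{\alpha^{q^{kr}}}f$. Your normalised orders are therefore non-increasing, not strictly decreasing, and they may stay at $0$ forever, so nothing forces a pole.

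Your Step 1 is correct but does not help here. It needs a pole of $f$ at $\alpha$ to start, and if $f$ has such a pole the forward argument already applies.

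Concretely, take $Q=f=Z-\theta$ and
\[
P(Z)=\frac{Z^{q^r}-\theta}{Z-\theta}=\frac{\bigl(Z-\theta^{q^{-r}}\bigr)^{q^r}}{Z-\theta},
\]
which is rational, regular at $0$, hence in $\mathbf{B}[\![Z]\!]\cap\mathbf{u}_Z$. Then:
\begin{itemize}
\item $f(Z^{q^r})=P(Z)f(Z)$.
\item The only finite pole of $P$ is $\theta\notin\overline{\F_q}$, and its only finite zero is $\theta^{q^{-r}}$.
\item Hence $P$ has no pole at $\theta^{q^{-kr}}$ (hypothesis 1) and no zero at $\theta^{q^{kr}}$ (hypothesis 2), for $k\ge1$.
\item The points $\theta^{q^{\pm kr}}$ are pairwise distinct (hypothesis 3).
\end{itemize}
Yet $f\neq0$, and $f$ has no pole anywhere on the orbit of $\theta$.

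The paper's proof uses the same two-case split, but handles the absorbed case differently, and this is the idea missing from your proposal, which looks only for poles. From $f(\alpha)=0$ it propagates \emph{zeros} backwards: $f(Z^{q^r})$ vanishes at $\alpha^{q^{-r}}$, hence so does $f$, and so on, giving infinitely many zeros of the nonzero algebraic function $f$.

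That backward step, however, requires $\mathrm{ord}_{\alpha^{q^{-kr}}}P\le 0$, i.e.\ that $P$ has no \emph{zeros} at the zeros of $Q(Z^{q^{kr}})$. The paper cites hypothesis 1 (no \emph{poles}) at that point, which is the wrong condition. The example above is exactly a failure of the needed one: the zero of $P$ at $\theta^{q^{-r}}$ absorbs the zero of $f(Z^{q^r})$.

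If hypothesis 1 is replaced by ``$P$ has no zeros at the zeros of $Q(Z^{q^{kr}})$'', the two-case argument is correct: forward propagation of poles via hypothesis 2, backward propagation of zeros via the corrected hypothesis 1. The corrected condition does hold in the paper's only application, the transcendence of $G_0$. There the zeros of $P=\beta^{-d}\pi_d^{-1}$ lie at poles of $\mathbf{l}$, and at such points $Q(Z^{q^{kd}})=\mathbf{l}(Z)^{q^{kd+i}}-\theta^{-1}$ cannot vanish.
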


\begin{proof}
From the relation \eqref{2.8}, the pole of $P$ which is a zero of $Q(Z)$ will gives rise either to a zero of $f(Z)$, or to a pole of $f(Z^{q^r})$.\\
In the first case, we will get that $f(Z^{q^r})$ has a common zero with $Q(Z^{q^r})$. This zero cannot be compensated by a pole of $P$ from item 1, hence is also a zero of $f$. It follows by induction that $f$ has a common zero with each $Q(Z^{q^{kr}})$, and therefore has infinitely many from item 3.\\
In the second case, we will get that $f(Z^{q^r})$ has a pole which is a zero of $Q(Z)$, and hence that $f(Z)$ has a pole which is a zero of $Q(Z^{q^{-r}})$. From item 2, this pole cannot be compensated by a zeo of $P$. It follows by induction that $f$ has a pole at a zero of each $Q(Z^{q^{-kr}})$, and therefore has infinitely many from item 3.\\
In both case, since $f$ is algebraic, we obtain that $f=0$.

\end{proof}

\begin{cor}{\label{G_0 transcendante}}
    The function $G_0$ is transcendental over $\mathbf{B}(Z)$.
\end{cor}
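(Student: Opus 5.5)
Suppose, towards a contradiction, that $G_0$ is algebraic over $\mathbf{B}(Z)$, i.e.\ $G_0 \in \mathbf{u}_Z$. By \eqref{G0} it satisfies
$$G_0(Z^{q^d}) = \beta^d\pi_d(Z)\,G_0(Z).$$
We apply Lemma \ref{lemme annulation eq fonct alg} with $f = G_0$, $r = d$ and $P(Z) = \beta^d\pi_d(Z)$. Lemma \ref{l,l_i... algebrique} gives $P \in \mathbf{u}_Z$, and $P$ has coefficients in $\overline{\F_q}(\theta,\beta)$. The lemma then yields $G_0 = 0$. This is absurd: $G_0$ has constant term $1$, since $v_Z(b_k) > 0$ for every $k$, and also $G_0(u,1) = \tilde{\pi}/(\theta\tilde{\theta}) \neq 0$.

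The substance is checking the hypotheses. I would take $Q(Z) := 1 - \theta\mathbf{l}(Z)^{q^{d}}$, so that the poles of $P(Z) = \prod_{i=1}^d (1-\theta\mathbf{l}(Z)^{q^i})^{-s}$ include the zeros of $Q$. Such a zero $z_0$ satisfies $\mathbf{l}(z_0)^{q^d} = 1/\theta$. Since $\mathbf{l}$ has coefficients in $\F_{q^d}$, the point $z_0$ cannot lie in $\overline{\F_q}$: otherwise $\mathbf{l}(z_0)$ would be algebraic over $\F_q$, while $1/\theta$ is not. For concreteness, $z_0 = u^{q^{-d}}$ works, since $\mathbf{l}(u^{q^{-d}})^{q^d} = \mathbf{l}(u) = 1/\theta$ because the coefficients lie in $\F_{q^d}$.

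Conditions 1 to 3 become statements about valuations. Zeros of $Q(Z^{q^{kd}})$ are points $z$ with $\theta\,\mathbf{l}(z)^{q^{(k+1)d}} = 1$. Poles of $P$ are points with $\theta\,\mathbf{l}(z)^{q^i} = 1$ for some $1 \le i \le d$. Working in the disk where $\mathbf{l}$ is defined and $v_\infty(\mathbf{l}(z)) = e\,v_\infty(z)$, these two conditions force $e q^{(k+1)d} v_\infty(z) = 1$ and $e q^i v_\infty(z) = 1$ respectively. These are incompatible for $k \ge 1$, which gives condition 1. The same valuation comparison separates zeros of $Q(Z^{q^{kd}})$ from zeros of $Q(Z^{q^{ld}})$ when $k \neq l$, which gives condition 3.

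For condition 2, zeros of $Q(Z^{q^{-kd}})$ have valuation $q^{(k-1)d}/e$. The factors of $P$ are units away from its poles, so $P$ has no zeros in the region concerned, and in any case the valuations do not match those of the zeros of $P$.

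I expect the main obstacle to be making "zeros and poles" rigorous for elements of $\mathbf{u}_Z$. They must be read as elements of an algebraic closure, or as conjugates under the places of the function field $\mathbf{B}(Z)(G_0,\mathbf{l})$. The valuation argument only controls points inside the convergence disk. I would handle this by arguing on places of the algebraic function field and replacing $v_\infty(z)$ by the $\sigma$-invariant quantity given by the order of $\mathbf{l}$. Alternatively, I would work with the single orbit $\{u^{q^{jd}}\}_{j\in\mathbb{Z}}$, noting that $\theta^{1-q^m} \neq 1$ for $m \neq 0$ as in the explicit formula for $\pi_d(u^{q^{jd}})$. Along that orbit, conditions 1 to 3 reduce to the injectivity of $m \mapsto \theta^{1-q^m}$. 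This suffices, because the proof of Lemma \ref{lemme annulation eq fonct alg} only follows one orbit of zeros or poles.
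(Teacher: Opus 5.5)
Your strategy is the paper's: feed the functional equation \eqref{G0} into Lemma \ref{lemme annulation eq fonct alg}, with $Q$ a factor of $\pi_d$. Your $Q=1-\theta\mathbf{l}(Z)^{q^d}$ is, up to the unit $-\theta^{-1}$, the paper's $\mathbf{l}(Z)^{q^i}-\theta^{-1}$ with $i=d$. As written, however, the key step fails because you inverted $\pi_d$. Since $\pi_d(Z)=\beta^{-d}\prod_{i=1}^d(1-\theta\mathbf{l}(Z)^{q^i})^{s}$, the multiplier attached to $f=G_0$ is
\[
P(Z)=\beta^d\pi_d(Z)=\prod_{i=1}^d\bigl(1-\theta\mathbf{l}(Z)^{q^i}\bigr)^{s},
\]
with exponent $+s$. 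Indeed, $G_0(Z^{q^d})=\prod_{k\geq d+1}(1-\theta\mathbf{l}(Z)^{q^k})^{-s}$ equals $G_0(Z)$ times this product. So the zeros of your $Q$ are zeros of $P$, not poles. The lemma needs a pole of $P$ (outside $\overline{\F_q}$) at a zero of $Q$, so it does not apply to your pair $(f,P)$. The product $\prod_{i=1}^d(1-\theta\mathbf{l}(Z)^{q^i})^{-s}$ you then write down is $(\beta^d\pi_d)^{-1}$, which is the multiplier of $G_0^{-1}$, not of $G_0$.

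The repair is exactly the paper's choice. Apply the lemma to $f=G_0^{-1}$, which lies in $\mathbf{u}_Z$ whenever $G_0$ does, with $P=\beta^{-d}\pi_d(Z)^{-1}$. The conclusion $G_0^{-1}=0$ is then absurd on its face, so your constant-term argument is not needed.

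With that correction the rest of your plan goes through. Your check of conditions 1--3 is more than the paper provides, since it proves the corollary in one line without verifying them.

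You can also remove your worry about the convergence disk without any valuation estimate. Because $\mathbf{l}$ has coefficients in $\F_{q^d}$, we have $\mathbf{l}(Z^{q^{md}})=\mathbf{l}(Z)^{q^{md}}$ for every $m\in\mathbb{Z}$. This gives the following description:
\begin{itemize}
\item the zeros of $Q(Z^{q^{kd}})$, for any $k\in\mathbb{Z}$, are exactly the points (or places) where $\mathbf{l}=\theta^{-q^{-(k+1)d}}$;
\item the poles of $P=\prod_{i=1}^d(1-\theta\mathbf{l}^{q^i})^{-s}$ are where $\mathbf{l}=\theta^{-q^{-i}}$ with $1\le i\le d$;
\item the zeros of $P$ lie above poles of $\mathbf{l}$.
\end{itemize}
Since $\theta$ is transcendental over $\F_q$, these values are finite and pairwise distinct. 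So the relevant fibres of $\mathbf{l}$ are disjoint, which gives conditions 1--3 at every place. The same observation justifies your single-orbit fallback along $\{u^{q^{jd}}\}_{j\in\mathbb{Z}}$.
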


\begin{proof}
Using the relation \eqref{G0}, we apply \color{blue}lemma \ref{lemme annulation eq fonct alg} \color{black}to $f(Z) = G_0(Z)^{-1}$ , $r=d$, $P(Z) = \beta^{-d}\pi_d(Z)^{-1}$ and $Q(Z) = \mathbf{l}(Z)^{q^i} - \theta^{-1}$ for any $i \in [\![1,d]\!]$. Because $G_0(Z)^{-1} \neq 0$, $G_0 \notin \mathbf{u}_Z$.
\end{proof}

\begin{lm}{\label{lemme ideal annulateur monogene}}
   Assume that there exists a minimal integer $i_0$ such that $G_{i_0}(Z)$ is algebraic over $k(Z)(\{G_i(Z)\}_{i=0}^{i_0})$. Then there exists a polynomial $P \in \overline{\F_q} (\theta)(Z)[X_0,...,X_{i_0}]$ such that $P(G_0(Z),...,G_{i_0}(Z)) = 0$ and which generates the algebraic relations among the $G_i$ over $\overline{\F_q} (\theta)(Z)$. 
\end{lm}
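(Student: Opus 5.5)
The plan is to identify the ideal of algebraic relations as a height-one prime ideal in a polynomial ring over a field, and then use that such ideals are principal in a UFD.

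Write $\mathbb{F} := \overline{\F_q}(\theta)(Z)$ and read the hypothesis as follows: $i_0$ is the least index such that $G_{i_0}(Z)$ is algebraic over $k(Z)(G_0(Z),\dots,G_{i_0-1}(Z))$. Consider the evaluation morphism
$$\varphi : \mathbb{F}[X_0,\dots,X_{i_0}] \to \mathbb{F}\big(G_0(Z),\dots,G_{i_0}(Z)\big),\qquad X_i \mapsto G_i(Z),$$
and its kernel $\mathfrak{p}$. The image of $\varphi$ is an integral domain, being contained in a field (for instance $\overline{\F_q}(\theta)(\!(Z)\!)$). Hence $\mathfrak{p}$ is a prime ideal, and it is exactly the ideal of algebraic relations among the $G_i$ over $\mathbb{F}$. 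The claim is that $\mathfrak{p}$ is principal and nonzero.

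\textbf{Step 1: transcendence degree.} By minimality of $i_0$, for every $j<i_0$ the function $G_j$ is transcendental over $k(Z)(G_0,\dots,G_{j-1})$. By induction, $G_0,\dots,G_{i_0-1}$ are algebraically independent over $k(Z)$. Since $\mathbb{F}$ is algebraic over $k(Z)$ (it is generated by the algebraic elements of $\overline{\F_q}$), algebraic independence is preserved. Indeed, adjoining algebraic elements to the base field does not change transcendence degree:
$$\degtr_{\mathbb{F}}\,\mathbb{F}(G_0,\dots,G_{i_0-1}) = \degtr_{k(Z)}\,k(Z)(G_0,\dots,G_{i_0-1}) = i_0 .$$
The hypothesis that $G_{i_0}$ is algebraic over $k(Z)(G_0,\dots,G_{i_0-1})$ then gives
$$\degtr_{\mathbb{F}}\,\mathbb{F}(G_0,\dots,G_{i_0}) = i_0 .$$
In particular $\mathfrak{p}\neq 0$.

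\textbf{Step 2: height of $\mathfrak{p}$.} The quotient $\mathbb{F}[X_0,\dots,X_{i_0}]/\mathfrak{p}$ is a finitely generated $\mathbb{F}$-domain whose fraction field has transcendence degree $i_0$. Its Krull dimension is therefore $i_0$. Since the polynomial ring is catenary of dimension $i_0+1$, we get $\mathrm{ht}(\mathfrak{p}) = (i_0+1) - i_0 = 1$.

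\textbf{Step 3: principality.} The ring $\mathbb{F}[X_0,\dots,X_{i_0}]$ is a UFD, and in a UFD every height-one prime is principal. To see this, take a nonzero $f\in\mathfrak{p}$. Some irreducible factor $P$ of $f$ lies in $\mathfrak{p}$, and $(P)$ is a nonzero prime ideal. Hence $0\subsetneq (P)\subseteq\mathfrak{p}$, and height one forces $(P)=\mathfrak{p}$. This $P$ satisfies $P(G_0(Z),\dots,G_{i_0}(Z))=0$ and generates all relations over $\overline{\F_q}(\theta)(Z)$.

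The only delicate point is Step 1. One must check that passing from $k(Z)$ to the larger base $\overline{\F_q}(\theta)(Z)$ does not create new algebraic relations among $G_0,\dots,G_{i_0-1}$. This follows because the extension of base fields is algebraic.
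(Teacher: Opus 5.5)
Your proof is correct, but it takes a different route from the paper's.

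The paper works one variable at a time. By minimality of $i_0$ it identifies $\overline{\F_q}(\theta)(Z)(G_0,\dots,G_{i_0-1})$ with the rational function field $\overline{\F_q}(\theta)(Z)(X_0,\dots,X_{i_0-1})$. It then takes the minimal polynomial of $G_{i_0}$ over that field, using only that a one-variable polynomial ring over a field is principal. Finally it clears denominators and transports this back to a polynomial $\Tilde{P}$ that divides every relation $Q$ in $\overline{\F_q}(\theta)(Z)(X_0,\dots,X_{i_0-1})[X_{i_0}]$.

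You instead show that the kernel $\mathfrak{p}$ is a height-one prime, via the dimension formula for finitely generated domains over a field, and conclude because height-one primes in a UFD are principal.

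What the paper's route buys:
\begin{itemize}
\item It is more elementary.
\item It shows directly that the generator is, up to clearing denominators, the minimal polynomial of $G_{i_0}$. So it visibly involves $X_{i_0}$, which is what the paper uses later when it says $P$ does not depend solely on $X_0$.
\end{itemize}
However, as written the paper only gets divisibility over the rational function field in $X_0,\dots,X_{i_0-1}$. To get generation of the ideal in $\overline{\F_q}(\theta)(Z)[X_0,\dots,X_{i_0}]$ one must choose $\Tilde{P}$ primitive and apply Gauss's lemma, which the paper does not say. Its phrase ``multiplying by powers of the $G_i$'' should also read ``clearing denominators''.

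What your route buys:
\begin{itemize}
\item It gives generation in the full polynomial ring at once.
\item It makes explicit the base change from $k(Z)$ to the algebraic extension $\overline{\F_q}(\theta)(Z)$, which the paper leaves implicit.
\end{itemize}
The price is Noether normalization and the height formula. You could avoid that machinery: take an irreducible $P\in\mathfrak{p}$. It must involve $X_{i_0}$, since $G_0,\dots,G_{i_0-1}$ are independent, so it is primitive in $X_{i_0}$. Hence it is the minimal polynomial of $G_{i_0}$ over the rational function field, and Gauss's lemma gives $P\mid Q$.

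Your reading of the hypothesis, with $\{G_i\}_{i=0}^{i_0-1}$, is the intended one. One trivial slip: the $G_i$ have coefficients in $\F_{q^d}(\theta,\beta)$, so the ambient field should be something like $\overline{k}(\!(Z)\!)$ rather than $\overline{\F_q}(\theta)(\!(Z)\!)$. This changes nothing in the argument.
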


\begin{proof}
Let $Q$ be a polynomial vanishing at $(G_0(Z),...,G_{i_0}(Z))$. 
The ring\\
$\overline{\F_q} (\theta)(Z)(G_0(Z),...,G_{i_0-1}(Z))[X_{i_0}]$ is principal, hence there exists $P$ a minimal polynomial of $G_{i_0}(Z)$ in this ring dividing $Q(G_0(Z),...,G_{i_0-1}(Z), X_{i_0})$. Multiplying $P$ by powers of the $G_i(Z)$ for $i \in [\![0, i_0-1]\!]$ if necessary, we may assume that $P$ belongs to $\overline{\F_q} (\theta)(Z)[G_0(Z),...,G_{i_0-1}(Z),X_{i_0}]$.
As $i_0$ is minimal, we obtain the isomorphism
$$\overline{\F_q} (\theta)(Z)(G_0(Z),...,G_{i_0-1}(Z)) \sim \overline{\F_q} (\theta)(Z)(X_0,...,X_{i_0-1}).$$
Consequently, there exists a polynomial $\Tilde{P} \in \overline{\F_q} (\theta)(Z)[X_0,...,X_{i_0}]$ dividing $Q$ in the ring $\overline{\F_q} (\theta)(Z)(X_0,...,X_{i_0-1})[X_{i_0}]$ corresponding to $P$ under this isomorphism, which generates all the algebraic relations among the $G_i(Z)$.    
\end{proof}

\subsection{The main result}\text{ }

\begin{lm}{\label{lemme somme algebrique => K-dependance}}
    Let $\K$ be a finite algebraic extension of $k$ and let $\lambda_i$ for $i \in [\![0,n]\!]$ be elements of $\K$, not all zero. Supposoe that the function $\mathcal{G}(z)$ defined by $\mathcal{G}(z) := \sum_{i=0}^n \lambda_i G_i(z)$ is algebraic over $k(z)$. Then the family $\big(G_i(u)\big)_{i=0}^n$ is $\K$-linearly dependent.
\end{lm}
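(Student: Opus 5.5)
The plan is to use the Mahler functional equations \eqref{Gi} and \eqref{G0} together with the algebraicity of $\mathcal{G}$ to produce, for every $k$, a $\K$-linear relation between the $G_i(u)$ and an algebraic number of controlled size. A Liouville-type estimate should then force the error term to vanish. This is only a plan: the Liouville comparison in the third step has not been checked and may fail.

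\emph{Step 1: separate $G_0$.} Since $G_0$ satisfies \eqref{G0} with factor $\beta^d\pi_d$ while the $G_i$, $i\ge1$, satisfy \eqref{Gi} with factor $\pi_d$, I would first treat the case where the combination is homogeneous. Applying the Mahler operator $Z\mapsto Z^{q^d}$ to $\mathcal{G}$ gives
\[
\mathcal{G}(Z^{q^d}) - \pi_d(Z)\mathcal{G}(Z) = \lambda_0(\beta^d-1)\pi_d(Z)G_0(Z) - \pi_d(Z)\sum_{i\ge1}\lambda_i M_i(Z).
\]
By Lemma \ref{l,l_i... algebrique} the left side and the $M_i$ are algebraic. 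So if $\beta^d\neq1$ and $\lambda_0\neq0$, this identity would make $G_0$ algebraic, contradicting Corollary \ref{G_0 transcendante}. Hence either $\lambda_0=0$ or $\beta^d=1$, and in both cases we get the homogeneous equation
\[
\mathcal{G}(Z^{q^d})=\pi_d(Z)\big(\mathcal{G}(Z)-\mathcal{M}(Z)\big), \qquad \mathcal{M}:=\sum_{i\ge1}\lambda_iM_i.
\]

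\emph{Step 2: iterate and specialize.} By the same induction as in Lemma \ref{recurrence G_i}, for every $k$,
\[
\frac{\mathcal{G}(Z^{q^{kd}})}{\pi_{kd}(Z)}=\mathcal{G}(Z)-\mathcal{S}_k(Z), \qquad \mathcal{S}_k(Z)=\sum_{j<k}\frac{\mathcal{M}(Z^{q^{jd}})}{\pi_{jd}(Z)}
\]
(with the $\beta^{jd}$ factors absorbed when $\lambda_0\neq0$). Evaluating at $u$, which is allowed by the computation of $\pi_d(u^{q^{jd}})$ and $M_i(u^{q^{jd}})$ above, the quantity $\mathcal{S}_k(u)$ is an explicit element of $\K(\beta)$. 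It is a $\K$-linear combination of truncations of the series defining the $G_i(u)$. Set $x_k:=\mathcal{G}(u)-\mathcal{S}_k(u)=\mathcal{G}(u^{q^{kd}})/\pi_{kd}(u)$.

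\emph{Step 3: Liouville.} Because $\mathcal{G}$ is algebraic over $k(Z)$ and vanishes at $0$ to order $\ge1$, the value $\mathcal{G}(u^{q^{kd}})$ is algebraic of degree bounded independently of $k$. Its height is $O(q^{kd})$, read off from a fixed annihilating polynomial $R(Z,Y)$, and its valuation is at least $c\,q^{kd}$. The aim is to show that, for $k$ large, these estimates force $\mathcal{G}(u^{q^{kd}})=0$. I expect this to be the main obstacle, because the constants in the height bound and in the valuation bound are both linear in $q^{kd}$.

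What I would try first is to avoid a bare numerical comparison. I would substitute $u^{q^{kd}}$ into $R(Z,Y)$ and use that the $v_Z$-expansion of $\mathcal{G}$ matches the root of $R$ vanishing at $0$. Since $v_\infty(u^{q^{kd}})\to\infty$ while the other roots of $R(u^{q^{kd}},\cdot)$ stay bounded away from $0$, I would isolate that branch. If the branch vanishing at $0$ is then exhibited as identically zero for large $k$, we get $\mathcal{G}(u^{q^{kd}})=0$ and hence $\mathcal{G}(u)=\mathcal{S}_k(u)$.

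\emph{Step 4: conclude.} Running the identity of Step 2 back to $k=0$, this should give $\mathcal{G}(u)=0$, i.e.\ $\sum_i\lambda_iG_i(u)=0$ with the $\lambda_i$ not all zero. That is the required $\K$-linear dependence.
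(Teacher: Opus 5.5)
There is a genuine gap. It sits where you feared, but the problem is worse than an estimate that fails: the target of Step 3 is false in general. A nonzero algebraic function has only finitely many zeros, and the points $u^{q^{kd}}$ are pairwise distinct. So $\mathcal{G}(u^{q^{kd}})=0$ for all large $k$ would force $\mathcal{G}\equiv 0$, and the hypothesis does not give that.

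Concretely, take $\beta=s=1$ and $u_1=\Tilde{\theta}$, which is allowed since $v_\infty(\Tilde{\theta})=-\frac{1}{q-1}$. Then $\mathbf{l}_1(u)=1/\theta=\mathbf{l}(u)$, so $\mathbf{l}_1=\mathbf{l}$. With $y_k=\theta\mathbf{l}(Z)^{q^k}$, the identity $y_k/\pi_k=1/\pi_k-1/\pi_{k-1}$ telescopes to
\[
\theta G_1(Z)-G_0(Z)=\theta\mathbf{l}(Z)-1 .
\]
This $\mathcal{G}$ is algebraic and vanishes at $u$, as the lemma predicts. But for $k\ge 1$ we have $\mathcal{G}(u^{q^{kd}})=\theta^{1-q^{kd}}-1$, which tends to $-1$. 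So neither a Liouville comparison nor branch isolation can give $\mathcal{G}(u^{q^{kd}})=0$.

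Two further slips. Your claim that $\mathcal{G}$ vanishes at $0$ fails as soon as $\lambda_0\neq 0$, since $G_0(0)=1$. And Step 4 does not follow as written: $\mathcal{G}(u)=\mathcal{S}_k(u)$ is an inhomogeneous relation with an algebraic right-hand side, not a linear dependence.

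The missing idea is to use the backward orbit $z_k:=u^{1/q^{kd}}$ instead of the forward orbit $u^{q^{kd}}\to 0$.

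1. Read your Step 2 identity as $\mathcal{G}(Z)=\mathcal{S}_k(Z)+\mathcal{G}(Z^{q^{kd}})/\pi_{kd}(Z)$. This continues $\mathcal{G}$ meromorphically to a disc containing $z_k$, because $z_k^{q^{kd}}=u$ lies in the disc of convergence.
2. Since $\mathbf{l}$ has coefficients in $\F_{q^d}$, we get $\mathbf{l}(z_k)^{q^{kd}}=\mathbf{l}(u)=1/\theta$, so $\pi_{kd}(z_k)=0$.
3. $\mathcal{S}_k$ only involves the factors $1-\theta\mathbf{l}^{q^i}$ with $i<kd$. These do not vanish at $z_k$, because $\theta\mathbf{l}(z_k)^{q^i}$ has valuation $q^{i-kd}-1<0$.
4. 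The numerator satisfies $\mathcal{G}(z_k^{q^{kd}})=\mathcal{G}(u)$.
5. Hence if $\mathcal{G}(u)\neq 0$, then $\mathcal{G}$ has a pole at each of the infinitely many distinct points $z_k$. This is impossible for an algebraic function, so $\mathcal{G}(u)=0$.

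This is the paper's proof. Your Step 1 is correct and is useful in this argument. It guarantees the homogeneous form of the identity, which takes care of the factor $\beta^{kd}$ on the $G_0$-component. The paper's decomposition silently ignores that factor.
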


\begin{proof}
Since the function $\mathcal{G}$ is algebraic, it has finitely many poles. By definition of $\mathcal{G}$, these poles are necessarly zeros of the denominator of $G_i$, i.e. zeros of the 
$\pi_k(z)$ for $k \in \N$.
For all positive integer $r$, we set $z_r := \sqrt[q^{rd}]{u}$ such that $\pi_{rd}(z_r) = 0$. Hence, there exists an integer $r_0$ such that $z_{r_0}$ is not a pole of $\mathcal{G}$. Let us decompose $\mathcal{G}$ as follows:

$$\mathcal{G}(z) = \sum_{i=1}^n \lambda_i \sum_{k=0}^{r_0 d -1} \frac{\mathbf{l}_i^{q^k}(z)}{\pi_k(z)} + \frac{g(z) }{\pi_{r_0d}(z)}.$$
where $g$ is the function defined by
$$g(z) := \lambda_0 G_0(z^{q^{r_0d}}) + \sum_{i=1}^n \lambda_i \sum_{k= r_0 d}^{\infty} \frac{\mathbf{l}_i^{q^k}(z) \pi_{r_0 d}(z)}{\pi_k(z)}.$$
This function satisfy:\\
$g(z) = \lambda_0 G_0(z^{q^{r_0d}}) + \sum_{i=1}^n \lambda_i \sum_{k=0}^{\infty} \frac{\mathbf{l}_i^{q^{r_0d+k}}(z) \pi_{r_0 d}(z)}{ \pi_{r_0d+k}(z)}$

$= \lambda_0 G_0(z^{q^{r_0d}}) + \sum_{i=1}^n \lambda_i \sum_{k=0}^{\infty} \frac{\mathbf{l}_i^{q^k}(z^{q^{r_0d}})}{\pi_k(z^{q^{r_0d}})}$

$= \mathcal{G}(z^{q^{r_0d}})$.

The functions $G_i(z)$ being well-defined at $u$, it follows that the function $g(z)$ is well-defined at $z_{r_0}$.
Since $z_{r_0}$ is a zero of $\pi_{r_0d}(Z)$ but not a pole of $\mathcal{G}(Z)$, we get that $z_{r_0}$ is a zero of the function $g(Z)$.

We thus obtain:
$$0 = g(z_{r_0}) = \mathcal{G}(z_{r_0}^{q^{r_0d}}) = \mathcal{G}(u) = \sum_{i=0}^n \lambda_i G_i(u),$$ which yields the desired linear dependence relation.
\end{proof}

\begin{prop}{\label{prop dependance alg => dependance lineaire}}
Assume that the family of formal series $(G_i(Z))_{i=0}^n$ is $k(Z)$-algebraicly dependent. Then there exists coefficients $\eta_i \in \mathbf{B}$, not all zero, such that $\sum_{i=0}^{n} \eta_i G_i(Z)$ is algebraic over $k(Z)$.
\end{prop}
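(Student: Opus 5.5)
This is the classical Ostrowski/Kolchin-type argument for inhomogeneous first-order Mahler equations, in the spirit of Denis. We reduce to a minimal algebraic relation, use its uniqueness to get a relation that is linear in the last unknown, and then use the derivation $\mathbf{D}$ and the Frobenius-type map $\sigma$ to force that relation to be linear in all the $G_i$.

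\textbf{Step 1: a minimal relation.} Take $i_0$ minimal such that $G_{i_0}$ is algebraic over $k(Z)(G_0,\dots,G_{i_0-1})$. By Corollary \ref{G_0 transcendante}, $G_0$ is transcendental, so $i_0\geq 1$. If $G_{i_0}$ is algebraic over $k(Z)$ itself, we are done with $\eta_{i_0}=1$. Otherwise, Lemma \ref{lemme ideal annulateur monogene} gives a generator $P(X_0,\dots,X_{i_0})$ of the ideal of relations, which we normalize to be monic in $X_{i_0}$, of degree $m$. Write $K_0=\mathbf{u}_Z(G_0,\dots,G_{i_0-1})$.

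\textbf{Step 2: the relation is linear in $X_{i_0}$.} Substitute $Z\mapsto Z^{q^d}$ in $P(G_0,\dots,G_{i_0})=0$. Then use \eqref{G0} and \eqref{Gi}, which give $G_0(Z^{q^d})=\beta^d\pi_dG_0$ and $G_i(Z^{q^d})=\pi_d(G_i-M_i)$. This produces a new relation of degree $m$ in $G_{i_0}$ over $\mathbf{u}_Z(G_0,\dots)$. The coefficients of $P$ are in $\overline{\F_q}(\theta)(Z)$; applying $\sigma$ to them (allowed by Lemma \ref{U_z sigma stable}) gives another element of the ideal. Since $P$ generates the ideal and the degrees agree, the new polynomial equals $\pi_d^{m}$ times $P$.
\begin{itemize}
\item Compare the coefficients of $X_{i_0}^{m-1}$. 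Writing $P=X_{i_0}^m+c\,X_{i_0}^{m-1}+\dots$, this yields a twisted equation relating $c$ and $\sigma^d(c)$.
\item Setting $R=G_{i_0}+c/m$ when $p\nmid m$ gives a relation that is linear in $G_{i_0}$.
\item If $p\mid m$, first take a $p$-th root. This is possible because $\mathbf{B}$ and $\mathbf{U}_Z$ were built stable under $p$-th roots, so $G_{i_0}^{p}$ can be traded for $G_{i_0}$ with coefficients in $\mathbf{B}$.
\end{itemize}
Either way we may assume $m=1$, so that $G_{i_0}=R(G_0,\dots,G_{i_0-1})$ is rational over $\mathbf{u}_Z$.

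\textbf{Step 3: $R$ is affine.} Use the transcendence of $G_0,\dots,G_{i_0-1}$ over $\mathbf{u}_Z$. Apply the functional equations to $R$ and compare: $R$ must satisfy
$$R(\beta^d\pi_dX_0,\ \pi_d(X_1-M_1),\dots)=\pi_d\bigl(R(X_0,\dots,X_{i_0-1})-M_{i_0}\bigr)$$
identically in the variables $X_j$, with $\sigma$ applied to the coefficients. Comparing homogeneous components (degree considerations, and the transcendence of $G_0$ for the scaling in $X_0$) forces $R=\sum_{j<i_0}\gamma_jX_j+\gamma$ with $\gamma_j,\gamma\in\mathbf{u}_Z$. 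Each $\gamma_j$ then satisfies an equation of the form $\sigma^d(\gamma_j)=\gamma_j$, up to the factor $\beta^d$ for $j=0$.

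\textbf{Step 4 (main obstacle): the $\gamma_j$ are constants in $\mathbf{B}$.} An algebraic element of $\mathbf{U}_Z$ fixed by the twisted Frobenius must be a constant in $\mathbf{B}$.
\begin{itemize}
\item The $Z$-part is settled by a valuation/degree argument: $v_Z$ is multiplied by $q^d$ under $\sigma^d$, so a nonconstant expansion is impossible.
\item The coefficient part is where $\mathbf{D}$ enters. $\mathbf{D}$ commutes with $\sigma$ on coefficients depending on $\theta$, and Lemma \ref{prolongement derivation} keeps everything inside $\mathbf{u}_Z$. Applying $\mathbf{D}$ to the linear relation $G_{i_0}-\sum\gamma_jG_j$ gives a shorter relation, unless the derivatives vanish; minimality of $i_0$ then forces the coefficients to be killed by $\mathbf{D}$ and hence to lie in the $p$-power closure, which is $\mathbf{B}$.
\item For the $\gamma_0$ term, use Lemma \ref{lemme annulation eq fonct alg} to exclude a nonzero $\gamma_0$ carrying extra poles.
\end{itemize}
This gives $G_{i_0}-\sum_{j<i_0}\eta_jG_j-\gamma\in\mathbf{u}_Z$ with $\eta_j\in\mathbf{B}$, so $\sum\eta_iG_i$ is algebraic over $k(Z)$.
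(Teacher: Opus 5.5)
Your Step 2 does not deliver what you claim, and it is where the difficulty of the proposition lies. Write $c^\tau$ for $c$ after $Z\mapsto Z^{q^d}$, $X_0\mapsto\beta^d\pi_dX_0$ and $X_j\mapsto\pi_d(X_j-M_j)$. Comparing the $X_{i_0}^{m-1}$-coefficients gives $c^\tau=\pi_d\,(c+mM_{i_0})$.

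If $p\nmid m$, this only says that $h:=G_{i_0}+c/m$ satisfies the homogeneous equation $h(Z^{q^d})=\pi_d(Z)h(Z)$. Nothing you wrote forces $h=0$ or $h\in K_0$, and nonzero solutions exist: $h=CG_0$ when $\beta^d=1$, which is why a $G_0$-term can appear in the conclusion. You would still have to show that every solution of the homogeneous equation algebraic over $K_0$ lies in $K_0$, which is a problem of the same kind as the original one.

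If $p\mid m$, the comparison reads $c^\tau=\pi_d c$ and $M_{i_0}$ disappears entirely. ``Taking a $p$-th root'' is not available, because a separable minimal polynomial of Artin--Schreier shape $Y^p-aY-b$ is not a $p$-th power. The constraints it imposes are $a^\tau=\pi_d^{p-1}a$ and $b^\tau=\pi_d^{p}(b-M_{i_0}^p+aM_{i_0})$. Nothing in your argument contradicts them; for instance $a=\gamma G_0^{p-1}$ with $\beta^{d(p-1)}=1$ meets the first. So ``we may assume $m=1$'' is unproved, and Steps 3--4 rest on it.

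The paper never fixes a minimal polynomial in the single variable $X_{i_0}$. It takes the set $\mathcal{S}$ of all $Q\in\mathbf{u}_Z[X_0,\dots,X_{i_0}]$ with $Q(Z^{q^d})\bigl(\beta^d\pi_dX_0,(\pi_d(X_i-M_i))_i\bigr)=R_Q\,Q$. This set is stable under taking $X_0$-coefficients and under each $\partial/\partial X_i$. Hence an element of minimal positive degree in $X_1,\dots,X_{i_0}$ has the form $\sum\eta_iX_i+\tilde Q(X_1^p,\dots,X_{i_0}^p)$. A top-degree coefficient $\delta$ of $\tilde Q$ satisfies $\delta(Z^{q^d})=\pi_d^{1-pJ}\delta$ with $J\geq 1$, so Lemma \ref{lemme annulation eq fonct alg} gives $\delta=0$; this is what rules out the Artin--Schreier phenomenon. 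Then $\eta_i(Z^{q^d})=\eta_i$ gives $\eta_i\in\mathbf{B}$ just by comparing $Z$-expansions. Since this $Q$ need not vanish at the $G_i$, the constant term $\nu$ is identified afterwards through the $v_Z$-adic limits in \eqref{eq8}--\eqref{eq9}, and that is where $CG_0$ enters.

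Two smaller points. Use the substitution $Z\mapsto Z^{q^d}$ rather than $\sigma$, since Lemma \ref{U_z sigma stable} needs a hypothesis on $\beta$ that is not assumed here. Also, $\mathbf{D}$ has no role in this proposition: the paper uses it only to descend from $\mathbf{B}$ to $k$ in Theorem \ref{thm k-lin indep => k-alg indep}, and ``killed by $\mathbf{D}$, hence in $\mathbf{B}$'' is not a valid inference anyway.
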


\begin{proof}
Assume that there exists $i_0$ such that $G_{i_0}(Z)$ is algebraic over $k(Z)(\{G_i(Z)\}_{i=0}^{i_0-1})$. Consider then a minimal $i_0$ with this property.
From \color{blue}lemma \ref{lemme ideal annulateur monogene} \color{black}, there exists $P \in \overline{\F_q} (\theta)(Z)[X_0,...,X_{i_0}]$ such that $P(G_0(Z),...,G_{i_0}(Z)) = 0$, and which generates the algebraic relations among the $G_i(Z)$. Under the change of variable $Z \mapsto Z^{q^d}$, we obtain:
$$P(Z^{q^{d}})(G_0(Z^{q^{d}}),...,G_{i_0}(Z^{q^{d}})) = 0.$$
We substitute the functionnal relations \eqref{Gi} and \eqref{G0} into it :
$$P(Z^{q^{d}})\Big( \beta^d \pi_d(Z)G_0(Z),\Big(\pi_d(Z) ( G_i(Z) - M_i(Z))\Big)_{i=1}^{i_0}\Big) = 0.$$
Since $P$ generates the algebraic relation among the $G_i$, there exists $R(Z) \in \mathbf{u}_Z$ such that: 
$$P(Z^{q^{d}})\Big(\beta^d \pi_d(Z) X_0,\Big(\pi_d(Z) ( X_i - M_i(Z))\Big)_{i=1}^{i_0}\Big)  = R(Z). P(Z)(X_0,...,X_{i_0}).$$

We wish to reduce this polynomial relation among the function $G_i$ to degree 1. To this end, we define $\mathcal{S}$ the set of polynomial $Q$ in $\mathbf{u}_Z[X_0,...,X_{i_0}]$ such that there exists an element $R_Q(Z) \in \mathbf{u}_Z$ satisfying: 
\begin{equation}
Q(Z^{q^{d}})\Big(\beta^d \pi_d(Z)X_0,\Big(\pi_d(Z) ( X_i - M_i(Z))\Big)_{i=1}^{i_0}\Big) = R_Q(Z). Q(Z)(X_0,...,X_{i_0}).
\label{S}
\end{equation}

From \color{blue} lemma \ref{l,l_i... algebrique}\color{black}, the polynomial $P$ is a non-constant element of $\mathcal{S}$ not depending solely on $X_0$. Let us then consider $Q$ a polynomial in $\mathcal{S}$ with minimal, non zero total degree in the $(X_i)_{i \neq0 }$. 

Considering $Q$ as a polynomial in $X_0$, we note that each of its coefficients belongs to $\mathcal{S}$. Consider one of its coefficients with nonzero total degree in the $(X_i)_{i \neq0 }$. The minimality of $Q$ implies that the total degree in the $(X_i)_{i \neq0 }$ of the coefficient is the same as that of $Q$. Hence, we may replace $Q$ by this coefficient and work with a polynomial independent of $X_0$.

Derivating \eqref{S} with respect to $X_i$ for $i \in [\![1,i_0]\!]$, we get:
$$\pi_d(Z)\frac{\partial Q}{\partial X_i}(Z^{q^{d}})\Big( \beta^d \pi_d(Z) X_0,\Big(\pi_d(Z) ( X_i - M_i(Z))\Big)_{i=1}^{i_0}\Big)$$
$$= R_Q(Z) \frac{\partial Q}{\partial X_i}(Z)(X_0,...,X_{i_0}).$$

Hence, the set $\mathcal{S}$ is stable under derivations with respect to the $X_i$. The minimalty of $Q$ implies that for all $i \in [\![1,i_0]\!]$, $\frac{\partial Q}{\partial X_i}$ is constant. We deduce the following expression for $Q$:
$$Q(Z)(X_0,...X_{i_0}) = \sum_{i=1}^{i_0} \eta_i(Z) X_i + \Tilde{Q}(Z)(X_1^p,...,X_{i_0}^p)$$
where $\eta_i(Z)$ are elements in $\mathbf{u}_Z$ and $\Tilde{Q}$ is a polynomial in $\mathbf{u}_Z[X_1,...X_{i_0}]$, with constant term denoted by $\nu(Z) \in \mathbf{u}_Z$, and a non-constant term of maximal degree denoted by $\delta \prod_{i=1}^{i_0} X_i^{j_i}$, $\delta \in \mathbf{u}_Z$.

If all the $\eta_i$ were zero, then $\sqrt[p]{Q}$ would be a polynomial in $\mathcal{S}$ of strictly smaller degree than that of $Q$. Hence, we may assume that  one of the $\eta_i$ is nonzero. Assume $\eta_1 = 1$, up to reordering and multiplying $Q$ by $\eta_1^{-1}$.

Substituting this new expression for $Q$ into equation \eqref{S}, we obtain for all $i \in [\![1,i_0]\!]$:
$$\eta_i(Z^{q^{d}})\pi_d(Z) = R_Q(Z) \eta_i(Z),$$
$$\delta(Z^{q^{d}}) \pi_d(Z)^{p(j_1+...+j_{i_0})} = R_Q(Z) \delta(Z).$$

This yields the following three points:\\
• $\pi_d(Z) = R_Q(Z)$ (avec le cas $i=1$);\\
• $\eta_i(Z^{q^{d}}) = \eta_i(Z)$;\\
• $\delta(Z^{q^{d}}) = \pi_d(Z)^{1-p(j_0+...+j_{i_0} )}\delta(Z)$.\\
The second point thus shows that the $(\eta_i)_{i \in [\![1,i_0]\!]}$ belongs to the set $\mathbf{B}$.

Concerning the third point, let us recall the expression of $\pi_d(Z)^{-1}$:

$$\frac{1}{\pi_d(Z)} = \beta^d\prod_{j=1}^d \left( 1 - \theta \mathbf{l}^{q^j}(Z)\right)^{-s}.$$
- Its zeros are those of $\frac{1}{\mathbf{l}(Z)}$;\\
- Its poles are the zeros of $\mathbf{l}(Z) - \theta^{-q^{-j}}$ for $j \in [\![1,d]\!]$.\\
From \color{blue} lemma \ref{lemme annulation eq fonct alg} \color{black}, choosing $Q(Z) = \mathbf{l}(Z) - \theta^{-q^{-j}}$ and for any $j \in [\![1,d]\!]$, we obtain from the third point that $\delta = 0$.

The expression fo $Q$ thus becomes: 
$$Q(Z)(X_0,...X_{i_0}) = \nu(Z) + \sum_{i=1}^{i_0} \eta_i X_i.$$
We can then substitute this expression back into \eqref{S}:

\begin{equation}
\nu(Z^{q^{d}}) -  \pi_d(Z) \sum_{i=1}^{i_0} \eta_i M_i(Z) =  \pi_d(Z) \nu(Z).
\end{equation}

Observe the similarity between this equation and \eqref{Gi}. Using an induction argument indentical to that in the proof of \color{blue} lemma \ref{recurrence G_i}\color{black}, we obtain for all positive integer $k$ that: 
\begin{equation}
\frac{\nu(Z^{q^{kd}})}{\pi_{kd}(Z)} = \nu(Z) + \sum_{j=0}^{k-1} \sum_{i=1}^{i_0} \eta_i \frac{M_i(Z^{q^{jd}})}{\pi_{jd}(Z)}.  
\label{eq8}
\end{equation}

From \color{blue} lemma \ref{recurrence G_i}\color{black}, we obtain a relation very similar to \eqref{eq8}:
\begin{equation}
\frac{ \sum_{i=1}^{i_0} \eta_i G_i(Z^{q^{kd}})}{ \pi_{kd}(Z)} = \sum_{i=1}^{i_0} \eta_i G_i(Z) - \sum_{j=0}^{k-1} \sum_{i=1}^{i_0} \eta_i \frac{M_i(Z^{q^{jd}})}{\pi_{jd}(Z)}.
\label{eq9}
\end{equation}

Let us work in the field $\mathbf{U}_Z$. According to \color{blue}lemma \ref{recurrence G_i}\color{black}, we have the convergence:
$$ \sum_{j=0}^{k-1} \sum_{i=1}^{i_0} \eta_i \frac{M_i(Z^{q^{jd}})}{ \pi_{jd}(Z)} \underset{k \to \infty}{\longrightarrow} \sum_{i=1}^{i_0} \eta_iG_i(Z).$$
We deduce from \eqref{eq8} that the sequence $\Big( \frac{\nu(Z^{q^{kd}})}{\pi_{kd}(Z)} - \nu(Z) \Big )_k$ also converges to $\sum_{i=1}^{i_0} \eta_iG_i(Z)$.
This convergence implies that the sequence $\Big( \frac{\nu(Z^{q^{kd}})}{\pi_{kd}(Z)}\Big )_k$ converges, and hence that $v_Z(\nu(Z)) \geq 0$, since $v_z(\pi_{kd}(Z)) = 0$.
If the inequality is strict, the sequence converges to $0$ and $\nu(Z) = \sum_{i=1}^{i_0} \eta_i G_i(Z)$.\\
If equality holds, the sequence $\left(\nu(Z^{q^{kd}})\right)_k$ converges to the constant term of $\nu$, which terefore lies in $\mathbf{B}$. 
Since the sequence $\Big( \frac{\nu(Z^{q^{kd}})}{\pi_{kd}(Z)}\Big )_k$ converge, the case $v_Z(\nu(Z)) = 0$ forces the sequence $\left( \beta^{kd} \right)_k$ to converge in $\mathbf{U}_Z$, and hence $\beta$ must have order dividing $d$.
we then have the following convergence:
$$\frac{\nu(Z^{q^{kd}})}{\pi_{kd}(Z)} 
= \nu(Z^{q^{kd}}) \beta^{kd} \prod_{i=1}^{kd} (1 -\theta \mathbf{l}(Z)^{q^i})^{-s}
\underset{k \to \infty}\longrightarrow C \prod_{i=1}^{\infty} (1-\theta \mathbf{l}(Z)^{q^i})^{-s} = C G_0(Z)$$
where $C = \nu(0) \in \mathbf{B}$. It follows that $\nu(Z) = C G_0(Z) - \sum_{i=1}^{i_0} \eta_i G_i(Z)$.

In both case, by taking either $\eta_0 = 0$, or $\eta_0 = -C$, and setting $\eta_i =0$ for all $i>i_0$, we can now conclude that 
$$\sum_{i=0}^{n} \eta_i G_i(Z) \in \mathbf{u}_Z.$$
\end{proof}

Combining \color{blue}proposition \ref{prop dependance alg => dependance lineaire} \color{black} with \color{blue}lemma \ref{lemme somme algebrique => K-dependance} \color{black} and \color{blue}proposition \ref{Nishioka G_i}\color{black}, we obtain:

\begin{cor}{\label{cor B lin indep => k-alg indep}}
    If the family $(G_i(u))_{i=0}^n$ is $\mathbf{B}$-linearly independent, then it is $\overline{k}$-algebraicly independent.
\end{cor}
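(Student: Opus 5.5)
We argue by contraposition, chaining the three results already established. Suppose the family $\big(G_i(u)\big)_{i=0}^n$ is $\overline{k}$-algebraically dependent. By Proposition \ref{Nishioka G_i}, which rests on Theorem \ref{Adam_Denis} applied to the Mahler system with matrix $\Tilde{A}(z)$, the family of functions $\big(G_i(Z)\big)_{i=0}^n$ is then $\overline{k(Z)}$-algebraically dependent. Since algebraic dependence over $\overline{k(Z)}$ is equivalent to algebraic dependence over $k(Z)$ (take norms of a nontrivial relation down to $k(Z)$), the functions are $k(Z)$-algebraically dependent.

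Proposition \ref{prop dependance alg => dependance lineaire} then supplies coefficients $\eta_0,\dots,\eta_n\in\mathbf{B}$, not all zero, such that $\mathcal{G}(Z):=\sum_{i=0}^n \eta_i G_i(Z)$ lies in $\mathbf{u}_Z$, that is, is algebraic over $\mathbf{B}(Z)$. We now want to apply Lemma \ref{lemme somme algebrique => K-dependance}. Its hypotheses are stated with coefficients in a finite extension $\K$ of $k$ and with $\mathcal{G}$ algebraic over $k(z)$, whereas here $\eta_i\in\mathbf{B}$.

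\textbf{The main obstacle: applying Lemma \ref{lemme somme algebrique => K-dependance} with coefficients in $\mathbf{B}$.} Only finitely many elements of $\mathbf{B}$ occur, so they all lie in a finite extension $\K'$ of $k(\beta)$, purely inseparable over $\overline{\F_q}(\theta,\beta)$ up to a finite constant field. The proof of Lemma \ref{lemme somme algebrique => K-dependance} never uses that the $\lambda_i$ lie in $\K$. It uses only two facts:
\begin{itemize}
\item $\mathcal{G}$ is algebraic, hence has finitely many poles, which must be among the zeros of the $\pi_k(z)$;
\item the decomposition $\mathcal{G}(z)=\text{(partial sum)}+g(z)/\pi_{r_0d}(z)$ with $g(z)=\mathcal{G}(z^{q^{r_0d}})$, obtained from \eqref{pik}.
\end{itemize}
Both remain valid verbatim for coefficients $\eta_i\in\mathbf{B}$ and for $\mathcal{G}\in\mathbf{u}_Z$, because elements of $\mathbf{u}_Z$ are algebraic over $\mathbf{B}(Z)$ and so also have finitely many poles. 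For the analytic evaluation at $z_{r_0}=u^{1/q^{r_0d}}$, note that $\mathbf{B}\subset\C_\infty$ consists of constants, so $g$ is still well defined at $z_{r_0}$ by convergence of the $G_i$ on $D_\infty(\tfrac1{qe})$. Running that argument gives $\sum_i\eta_iG_i(u)=0$ with $\eta_i\in\mathbf{B}$ not all zero.

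This contradicts the assumed $\mathbf{B}$-linear independence of $\big(G_i(u)\big)_{i=0}^n$, which proves the corollary. Two checks remain. First, $z_{r_0}$ is genuinely a zero of $\pi_{r_0d}$: we have $\mathbf{l}(z_{r_0})^{q^{r_0d}}=\mathbf{l}(u)=\theta^{-1}$, using that $\mathbf{l}$ has coefficients in $\F_{q^d}$. Second, finiteness of the poles of $\mathcal{G}$ must be justified for elements of $\mathbf{u}_Z$, which are possibly involving $Z^{1/p^k}$; after replacing $Z$ by a suitable $p^k$-th root, this is the usual finiteness for algebraic functions.
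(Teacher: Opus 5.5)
Your argument is correct and is exactly the paper's: the corollary is obtained there by chaining Proposition~\ref{Nishioka G_i}, Proposition~\ref{prop dependance alg => dependance lineaire} and Lemma~\ref{lemme somme algebrique => K-dependance} by contraposition, just as you do. The ``main obstacle'' you single out is not really one: since $\mathbf{B}\subset\overline{k}$, the finitely many $\eta_i$ generate a finite extension $\K=k(\eta_0,\dots,\eta_n)\subseteq\mathbf{B}$ of $k$, and $\mathbf{u}_Z$ is algebraic over $k(Z)$, so the lemma applies as stated and the $\K$-linear dependence it gives is already a $\mathbf{B}$-linear dependence, without re-running its proof.
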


Finally, we can eliminate the dependence on the function $G_0$, yielding the following result:

\begin{thm}[Theorem \ref{1.4}]{\label{thm B lin indep => k-alg indep}}
    Let $a \in \{0, 1\}$. If the family $(G_i(u))_{i=a}^n$ is $\mathbf{B}$-linearly independent, then it is $\overline{k}$-algebraicly independent.
\end{thm}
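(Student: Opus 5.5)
The case $a=0$ is exactly Corollary \ref{cor B lin indep => k-alg indep}. So I will only treat $a=1$: assume $(G_i(u))_{i=1}^n$ is $\mathbf{B}$-linearly independent, and show that $(G_i(u))_{i=1}^n$ is $\overline{k}$-algebraically independent. The idea is to bring $G_0$ back into the picture, so that the machinery already built for the full family can be reused. I split into two cases, according to whether the enlarged family $(G_i(u))_{i=0}^n$ is still $\mathbf{B}$-linearly independent.

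\emph{First case: $(G_i(u))_{i=0}^n$ is $\mathbf{B}$-linearly independent.} Corollary \ref{cor B lin indep => k-alg indep} gives algebraic independence of $(G_i(u))_{i=0}^n$ over $\overline{k}$. A subfamily of an algebraically independent family is algebraically independent, so the conclusion for $(G_i(u))_{i=1}^n$ follows at once.

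\emph{Second case: there is a relation $\sum_{i=0}^n \eta_i G_i(u)=0$ with $\eta_i\in\mathbf{B}$ not all zero.} By the hypothesis on $(G_i(u))_{i=1}^n$ we must have $\eta_0\neq 0$, so $G_0(u)$ lies in the $\mathbf{B}$-span of the $G_i(u)$, $i\ge1$. I work at the level of functions. Suppose, for contradiction, that $(G_i(u))_{i=1}^n$ is algebraically dependent over $\overline{k}$. Proposition \ref{Nishioka G_i}, applied to the Mahler system restricted to $G_1,\dots,G_n$, shows that $(G_i(Z))_{i=1}^n$ is algebraically dependent over $k(Z)$. This restricted system has the matrix $\Tilde{A}$ with the first row and column removed, and it satisfies the same hypotheses of Theorem \ref{Adam_Denis}. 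I then rerun the proof of Proposition \ref{prop dependance alg => dependance lineaire} with indices starting at $1$. Since $G_0$ is absent, the minimal polynomial $Q$ in $\mathcal{S}$ never involves $X_0$, so the same steps produce $\nu\in\mathbf{u}_Z$ and $\eta_i\in\mathbf{B}$ with
\[
\frac{\nu(Z^{q^{kd}})}{\pi_{kd}(Z)} = \nu(Z)+\sum_{j<k}\sum_i \eta_i\frac{M_i(Z^{q^{jd}})}{\pi_{jd}(Z)}.
\]
The dichotomy at the end of that proof then gives one of two outcomes. Either $\sum_{i\ge1}\eta_i G_i(Z)$ is algebraic, or $CG_0(Z)-\sum_{i\ge1}\eta_iG_i(Z)$ is algebraic with $C\in\mathbf{B}$.

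I then handle the two outcomes separately.

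\begin{itemize}
\item If the algebraic combination is $\sum_{i\ge1}\eta_iG_i(Z)$, Lemma \ref{lemme somme algebrique => K-dependance} (with the field of coefficients enlarged to a finite extension inside $\mathbf{B}\overline{k}$; its proof only evaluates at $u$) yields a $\mathbf{B}$-linear dependence among $(G_i(u))_{i\ge1}$. This contradicts the hypothesis.
\item If the algebraic combination is $CG_0(Z)-\sum\eta_iG_i(Z)$ with $C\ne0$, I will compare it with the relation already known at $u$. Combining with $\eta_0 G_0(u)=-\sum\eta_i' G_i(u)$ to eliminate $G_0$ does not directly give a function identity. So instead I would try to show directly that this outcome is impossible for an algebraic function. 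Applying $Z\mapsto Z^{q^d}$ and using \eqref{Gi} and \eqref{G0}, the function $h:=CG_0-\sum\eta_iG_i$ satisfies
\[
h(Z^{q^d})=\pi_d(Z)\Big(h(Z)+(\beta^d-1)CG_0(Z)+\sum\eta_iM_i(Z)\Big).
\]
When $\beta^d=1$ this is an inhomogeneous equation of the same type as the one for $\nu$. Evaluating via the pole argument of Lemma \ref{lemme somme algebrique => K-dependance}, the zeros $z_r$ of $\pi_{rd}$ again force $h(u)=0$, that is $CG_0(u)=\sum\eta_iG_i(u)$. Together with the hypothesis, this gives no contradiction yet.
\end{itemize}

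The main obstacle is therefore this last outcome: an algebraic function of the form $CG_0-\sum\eta_iG_i$. My plan there is to exploit that $G_0$ has poles at the zeros of every $1-\theta\mathbf{l}(Z)^{q^k}$, infinitely many in total. The finite combinations $M_i$ and the $G_i$ have poles at the same loci, but with order at most $s$ and with residues carrying the factors $\mathbf{l}_i^{q^k}$. I would compare the polar parts at $z_r$ for large $r$ to show that the relation forces $C=0$, reducing to the first outcome. If that fails, I would instead reindex, treating $G_0$ as a member of the family, and apply Corollary \ref{cor B lin indep => k-alg indep} to a maximal $\mathbf{B}$-linearly independent subfamily containing $G_1,\dots,G_n$.
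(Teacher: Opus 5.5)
Your treatment of $a=0$ and of the first case is fine, but the second case is never closed, and the route you take cannot close it. Rerunning Proposition \ref{prop dependance alg => dependance lineaire} for $G_1,\dots,G_n$ leaves open the outcome $CG_0-\sum_{i\ge1}\eta_iG_i\in\mathbf{u}_Z$ with $C\neq0$. Your plan to force $C=0$ by comparing polar parts is bound to fail, because such algebraic combinations really exist.

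Take $\beta=s=1$ and $u_1=\Tilde{\theta}$, a $\theta$-torsion point of valuation $-\frac{1}{q-1}$, so that $\mathbf{l}_1=\mathbf{l}$. Writing $a_k=\theta\mathbf{l}^{q^k}$, one has $a_k/\pi_k=1/\pi_k-1/\pi_{k-1}$, and telescoping gives $\theta G_1(Z)-G_0(Z)=\theta\mathbf{l}(Z)-1\in\mathbf{u}_Z$. Evaluated at $u$, this outcome only reproduces, up to a scalar, the relation $\sum_{i=0}^n\eta_iG_i(u)=0$ that defines your second case. So no contradiction can be extracted from it.

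Your fallback also picks the wrong subfamily. In the second case, the maximal $\mathbf{B}$-linearly independent subfamily of $(G_i(u))_{i=0}^n$ containing $G_1(u),\dots,G_n(u)$ is $(G_i(u))_{i=1}^n$ itself. It does not contain $G_0(u)$, so Corollary \ref{cor B lin indep => k-alg indep} does not apply to it, and the statement for that family is exactly what you are trying to prove.

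The missing idea is an exchange argument carried out entirely on the values.
\begin{enumerate}
\item Since $G_0(u)\neq0$ and $\eta_0\neq0$, some $\eta_{i_0}$ with $i_0\geq1$ is nonzero.
\item The family $(G_0(u),(G_i(u))_{1\le i\le n,\,i\neq i_0})$ is $\mathbf{B}$-linearly independent. Indeed, a relation with coefficient $c_0\neq0$ on $G_0(u)$ becomes, after substituting $G_0(u)=-\eta_0^{-1}\sum_{i\geq1}\eta_iG_i(u)$, a relation among $(G_i(u))_{i\geq1}$. Its coefficient on $G_{i_0}(u)$ is $-c_0\eta_0^{-1}\eta_{i_0}\neq0$, contradicting the hypothesis.
\item Corollary \ref{cor B lin indep => k-alg indep}, applied to the points $u_i$ with $i\neq i_0$, makes this family of $n$ values $\overline{k}$-algebraically independent.
\item Since $\mathbf{B}\subset\overline{k}$, the relation gives $\overline{k}(G_0(u),(G_i(u))_{i\neq i_0})=\overline{k}(G_1(u),\dots,G_n(u))$. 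This field therefore has transcendence degree $n$, and $(G_i(u))_{i=1}^n$ is algebraically independent.
\end{enumerate}
This is the paper's argument. It makes unnecessary both your detour through Proposition \ref{Nishioka G_i} for the truncated system and the rerun of Proposition \ref{prop dependance alg => dependance lineaire}.
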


\begin{proof}
Assume that the family $(G_i(u))_{i=1}^n$ is $\mathbf{B}$-linearly independent.

From \color{blue} corollary \ref{cor B lin indep => k-alg indep}\color{black}, if the family $(G_i(u))_{i=0}^n$ is $\mathbf{B}$-linearly independent, then it is $\overline{k}$-algebraicly independent, thus the same holds for the family $(G_i(u))_{i=1}^n$. 

Assume now that the family $(G_i(u))_{i=0}^n$ $\mathbf{B}$-linearly dependent. 
Since the family $(G_i(u))_{i=1}^n$ is $\mathbf{B}$-linearly independent, there exists a linear combination $\lambda_0 G_0(u) + \sum_{i=1}^n \lambda_i G_i(u) = 0$ where $\lambda_0 \neq 0$, such that there exists $i_0 \in [\![1,n]\!]$ verifying $\lambda_{i_0} \neq 0$. 
Up to reordering, we can assume that $i_0 = n$. The family $(G_i(u))_{i=0}^{n-1}$ is thus $\mathbf{B}$-linearly independent.
It follows from \color{blue} corollary \ref{cor B lin indep => k-alg indep} \color{black} that the family $(G_i(u))_{i=0}^{n-1}$ is $\overline{k}$-algebraicly independent, and so is the family 
$$\left(G_1(u),..., G_{n-1}(u), \frac{-1}{\lambda_n}(\lambda_0 G_0(u) + \sum_{i=1}^{n-1} \lambda_i G_i(u))\right).$$ In other words, we find that the family $(G_i(u))_{i=1}^n$ is $\overline{k}$-algebraicly independent.
\end{proof}

It then remains to consider the possibility of descending the $\mathbf{B}$-linear independence to a $k$-linear independence. This would correspond to an analogue of Baker’s theorem for function fields, attainable for certain particular values of $\beta$.

\begin{thm}[Theorem \ref{1.5}]{\label{thm k-lin indep => k-alg indep}}
Let $a \in \{0, 1\}$ and $\beta \in \bigcup_{h \in \N} \F_{q}(\theta^{1/p^h})$.
If the family $(G_i(u))_{i=1}^n$ is $k$-linearly independent, then it is $\overline{k}$-algebraicly independent.
\end{thm}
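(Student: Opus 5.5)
By Theorem \ref{1.4}, it is enough to show that under the hypothesis on $\beta$, $k$-linear independence of $(G_i(u))_{i=1}^n$ implies $\mathbf{B}$-linear independence. I would argue by contraposition: assume a nontrivial relation $\sum_{i=1}^n \eta_i G_i(u) = 0$ with $\eta_i \in \mathbf{B}$, and produce a nontrivial relation with coefficients in $k$.

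The first step is to reduce from $\mathbf{B}$ to a field of the form $\F_{q^m}(\theta^{1/p^h})$. Only finitely many coefficients occur, so they lie in some $\F_{q^m}(\theta^{1/p^h})$, using $\beta\in \F_q(\theta^{1/p^h})$. Next I would raise the relation to the power $p^h$. Frobenius is additive, so this gives
$$\sum_i \eta_i^{p^h} G_i(u)^{p^h}=0, \qquad \eta_i^{p^h}\in \F_{q^m}(\theta).$$
One then needs $G_i(u)^{p^h}$ expressed linearly in terms of the $G_j(u)$. Instead of this, I would exploit the series structure directly. Since $\beta\in\F_q(\theta^{1/p^h})$ and $\mathbf{l}(u)=1/\theta$, each $G_i(u)=\sum_k \beta^k \pi_k^{-s}\,(u_i/\theta\tilde\theta)^{q^k}$ is a value in which $\theta^{1/p^h}$ enters only through $\beta$. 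Writing $\theta^{1/p^h}$-linear combinations via a basis $1,\theta^{1/p^h},\dots$ of $\F_{q^m}(\theta^{1/p^h})$ over $\F_{q^m}(\theta)$ therefore looks unproductive.

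A cleaner route is to use the Frobenius twist $\sigma$ of Lemma \ref{U_z sigma stable}, which fixes $\theta$ and acts on $\overline{\F_q}$. Applying the field automorphism $x\mapsto x^{q^{-1}}$ on $\overline{\F_q}$ coefficients, while fixing $\theta$ and $\theta^{1/p^h}$, is legitimate on the subfield $\overline{\F_q}(\theta^{1/p^h})((1/\theta))$-type completions. The $G_i(u)$ are essentially fixed, up to conjugating the $u_i$, since they have coefficients in $\F_{q}(\theta^{1/p^h},u_i,\tilde\theta)$.

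Taking the trace from $\F_{q^m}(\theta^{1/p^h})$ down to $\F_q(\theta^{1/p^h})$ on the coefficients then yields a relation with coefficients in $\F_q(\theta^{1/p^h})$, after multiplying by a suitable $\overline{\F_q}$-scalar to make a trace nonzero. The remaining purely inseparable part can be removed as follows. Decompose each coefficient in the basis $\theta^{j/p^h}$, $0\le j<p^h$, over $k$. Then use that the $G_i(u)$, for $\beta=1$ say, lie in $k_\infty(\tilde\theta,u_1,\dots,u_n)$, which is separable over $k_\infty$ and hence linearly disjoint from $k_\infty(\theta^{1/p^h})$. This splits the relation into relations with coefficients in $k$, and at least one of them is nontrivial.

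The main obstacle is the case $\beta\neq1$, where $\beta$ itself involves $\theta^{1/p^h}$ and the values $G_i(u)$ are no longer in a separable extension. In that case I would first pass to $p^h$-th powers of the whole setup: replacing $u$ by a conjugate structure where $\beta^{p^h}\in k$, noting that $G_i(u)^{p^h}$ is the analogous series built from $u_i^{p^h}$ and $\beta^{p^h}$. Then I would carry out the separability argument there. Finally, the case $a=0$, which involves $G_0$, follows in the same way: $G_0(u)$ also lies in the separable field $k_\infty(\tilde\theta)$.
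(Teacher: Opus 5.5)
\textbf{There is a genuine gap in your proposal.} Reducing to ``a $\mathbf{B}$-linear relation among the $G_i(u)$ forces a $k$-linear one'' is the right target. For $\beta=1$, your linear-disjointness argument also correctly removes the purely inseparable part, since the $G_i(u)$ lie in the separable extension $k_\infty(\tilde\theta,u_1,\dots,u_n)$.

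The gap is the descent from $\F_{q^m}$ to $\F_q$, which you attempt directly on the numbers $G_i(u)\in\C_\infty$. The trace step needs an automorphism $\tau$, acting as a power of Frobenius on $\overline{\F_q}$ and fixing $\theta$, that fixes every $G_i(u)$. These values lie in $k_\infty(\tilde\theta,u_1,\dots,u_n,\beta)$, whose residue field $\F_{q^d}$ is in general larger than $\F_q$. So $\tau$ does not fix that field, and in general moves $\tilde\theta$ or some $u_i$, as your phrase ``up to conjugating the $u_i$'' concedes. Applying $\tau$ to $\sum_i\eta_iG_i(u)=0$ therefore gives a relation among different numbers (essentially the logarithms of the $\tau(u_i)$), and it cannot be combined with the original relation.

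No formal field-theoretic argument can replace this step. The elements $1$ and $\zeta\in\F_{q^2}\setminus\F_q$ are $\F_{q^2}$-linearly dependent but $k$-linearly independent, so the descent must use the specific structure of the $G_i$.

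The case $\beta\neq1$ is not handled either. Even granting a $k$-relation among the $G_i(u)^{p^h}$, taking $p^h$-th roots only gives coefficients in $\F_q(\theta^{1/p^h})$. Splitting along the basis $(\theta^{j/p^h})_j$ is then unavailable, because $G_i(u)=\sum_k\beta^k(\cdots)$ itself involves $\theta^{1/p^h}$ through $\beta$.

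\textbf{The missing idea is to perform the descent on the functions $G_i(Z)$.} There a twist compatible with the $G_i$ exists.
\begin{itemize}
\item Arguing by contraposition, algebraic dependence of the values gives, through Propositions \ref{Nishioka G_i} and \ref{prop dependance alg => dependance lineaire}, coefficients $\eta_i\in\mathbf{B}$ with $\sum_i\eta_iG_i(Z)\in\mathbf{u}_Z$.
\item Since $\beta\in\F_q(\theta^{1/p^h})$, some power $\eta_i^{p^j}$ lies in $\overline{\F_q}(\theta)$ for all $i$. Take $j_0$ minimal, then coefficients $\kappa_i\in\overline{\F_q}[\theta]$ minimal for degrees; the derivation $\mathbf{D}$ forces $j_0=0$.
\item The endomorphism $\sigma$ ($Z\mapsto Z^q$, $x\mapsto x^q$ on $\overline{\F_q}$, fixing $\theta$ and hence $\beta$) stabilizes $\mathbf{u}_Z$. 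It sends $G_i$ to $\pi_1(Z)(G_i-\mathbf{l}_i)$ for $i\geq1$, and $G_0$ to an algebraic multiple of itself. Hence it carries the algebraic combination $\sum_i\kappa_iG_i(Z)$ to another algebraic combination with twisted coefficients.
\item Comparing the two combinations lowers the degree at a monic $\kappa_m$, so minimality forces $\kappa_i\in\F_q[\theta]$.
\item Lemma \ref{lemme somme algebrique => K-dependance} then returns a $k$-linear relation among the $G_i(u)$.
\end{itemize}
The substitution $Z\mapsto Z^q$ turns each $G_i$ into an affine expression in itself with algebraic coefficients, and it has no counterpart on the values. That is why a value-level trace cannot stand in for it.
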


\begin{proof}
It suffices to show that under the assumptions, a $k$-linear independence implies a $\mathbf{B}$-linear independence. Indeed, it will then suffice to combine this result with \color{blue} proposition \ref{prop dependance alg => dependance lineaire}\color{black}, \color{blue} lemma \ref{lemme somme algebrique => K-dependance}\color{black}, and \color{blue}theorem \ref{Nishioka G_i} \color{black}to obtain the analogue of \color{blue}corollary \ref{cor B lin indep => k-alg indep}\color{black}. We conclude in the same manner as the proof of \color{blue} theorem \ref{thm B lin indep => k-alg indep}\color{black}.

Assume, then, that there exist elements $\eta_0,...,\eta_n$ in $\mathbf{B}$ such that:
$$\sum_{i=0}^{i_0} \eta_i G_i(Z) \in \mathbf{u}_Z.$$
Since the $\eta_i$ lie in $\mathbf{B}$, by assumption on $\beta$ there exists an integer $j$ such that the $\eta_i^{p^j}$ lie in $\overline{\F_q}(\theta)$. Up to the multiplication by a monic polynomial in $\overline{\F_q}[\theta]$, we thus obtain polynomials $\mu_i \in \overline{\F_q}[\theta]$, not all zero, such that: 
$$\sum_{i=0}^{i_0} \mu_i G_i^{p^j}(Z) \in \mathbf{u}_Z.$$

Let $j_0$ be the smallest integer $j$ such that there exists polynomial $\mu_{i} \in \overline{\F_q}[\theta]$, not all zero, satisfying
$$\sum_{i=0}^{i_0}  \mu_i G_i^{p^{j_0}}(Z) \in \mathbf{u}_Z.$$

We define a partial order on $(\overline{\F_q}[\theta])^{i_0+1}$ by:
$$(P_i)_{i=0}^{i_0} \leq (Q_i)_{i=0}^{i_0}
\iff 
\forall i \in [\![0,i_0]\!], \deg(P_i) \leq \deg(Q_i).$$
Let $(\kappa_i)_{i=0}^{i_0}$ be a minimal family for this order satisfying
$$\mathcal{G}(Z) := \sum_{i=0}^{i_0} \kappa_i G_i^{p^{j_0}}(Z) \in \mathbf{u}_Z.$$

Since $G_0(Z) \not \in \mathbf{u}_Z$ from \color{blue}corollary \ref{G_0 transcendante}\color{black}, if $\kappa_0 \neq 0$, then there exists a second element $m \in [\![1,i_0]\!]$ such that $\kappa_{m}$ is nonzero.
Mltiplying $\mathcal{G}$ by an element of $\overline{\F_q}$ if necessary, we may always assume that there exists $m \in [\![1,i_0]\!]$ such that $\kappa_m$ is monic and nonzero.

By \color{blue}lemma \ref{prolongement derivation}\color{black}, if $j_0 \neq 0$, $\displaystyle \mathbf{D}(\mathcal{G}(Z)) = \sum_{i=0}^{i_0} \frac{\partial}{\partial \theta}( \kappa_i) G_i^{p^{j_0}}(Z) \in \mathbf{u}_Z$. The family $(\kappa_i)_{i=0}^{i_0}$ being assumed minimal, all the $\frac{\partial}{\partial \theta}( \kappa_i)$ are necessarly zero. Consequently, each $\kappa_i$ lies in $\overline{\F_q}[\theta^p]$. This means that we can estract a $p$-th rooth of each coefficient that still belongs to $\overline{\F_q}[\theta]$. But then the minimalty of $j_0$ is contradicted. It follows that $j_0 = 0$.

From \color{blue} lemma \ref{U_z sigma stable}\color{black}, the endomorphism $\sigma$ stabilizes $\mathbf{u}_Z$, thus $\sigma(\mathcal{G}(Z)) \in \mathbf{u}_Z$. By assumption, it also fixes the element $\beta$.
We then calculate its expression in terms of $\mathcal{G}(Z)$.
For $i \in [\![1,i_0]\!]$, we thus have:
$$\sigma(G_i(Z)) = \sum_{k \geq 0} \pi_1(Z) \left(\frac{\beta}{\sigma(\beta)}\right)^k \frac{\mathbf{l}_i^{q^{k+1}}(Z)}{ \pi_{k+1}(Z)} = \pi_1(Z)\big( G_i(Z) - \mathbf{l}_i(Z) \big);$$
$$\sigma(G_0(Z)) = \prod_{i=1}^\infty \left( 1 - \mathbf{l}(Z)^{q^{i+1}} \theta \right)^{-s} =  \pi_1(Z) G_0(Z).$$
We thus obtain: 
$$\sigma(\mathcal{G}(Z)) = \pi_1(Z) \left( \sigma(\kappa_0) G_0(Z) + \sum_{i=1}^{i_0} \sigma(\kappa_i)  \big(G_i(Z) - \mathbf{l}_i(Z)\big) \right) \in \mathbf{u}_Z.$$
It is known by \color{blue}lemmas \ref{l,l_i... algebrique} \color{black} and \color{blue}\ref{U_z sigma stable} \color{black} that $\sigma(\kappa_i)$, $ \pi_1(Z)$, $\mathbf{l}_i(Z)$ et $\mathbf{l}(Z)$ belongs to $\mathbf{u}_Z$, hence 
$$\sum_{i=0}^{i_0} \sigma(\kappa_i) G_i(Z) \in \mathbf{u}_Z.$$
Consequently,
$$\sum_{i=0}^{i_0} \big( \kappa_i - \sigma(\kappa_i) \big) G_i(Z) = \sum_{i=0}^{i_0} \sum_{k=0}^{n_\alpha-1} \big( \kappa_{i,k} - \sigma(\kappa_{i,k}) \big) \alpha^k G_i(Z)  \in \mathbf{u}_Z.$$

We have for all $i \in [\![0,i_0]\!]$ that $\deg\big( \kappa_i - \sigma(\kappa_i) \big) \leq \deg(\kappa_i)$. Moreover, since $\kappa_m$ is monic, $\deg\big( \kappa_m - \sigma(\kappa_m) \big) < \deg(\kappa_m)$. This implies from the minimality of the family $(\kappa_i)_{i=0}^{i_0}$ that $\sigma(\kappa_i) = \kappa_i$ for all $i\in [\![1,i_0]\!]$. In other words, we get that all the $\kappa_i$ lies in $\F_q[\theta]$, which gives the desired $k$-linear dependence relation.
\end{proof}

\subsection{Application}\text{ }

Evaluating the functions $G_i$ at $(Z,\beta,s) = (u,1,1)$ yields the Carlitz logarithm:
$$G_i(u,1,1) = \frac{1}{\theta \Tilde{\theta}} \Log_C(u_i).$$
We deduce then from \color{blue} theorem \ref{thm k-lin indep => k-alg indep} \color{black} the following result:

\begin{prop}\label{application log carlitz}
   Let $u_1,...,u_n \in \overline{k}$ be $n$ algebraic elements in $D_\infty(\frac{-q}{q-1})$ the disc of convergence of the Carlitz logarithm. 
   If the family $\big( \Tilde{\pi},\Log_C(u_1),...,$ $\Log_C(u_n)\big)$ is $k$-linearly independent, then it is $\overline{k}$-algebraicly independent.
\end{prop}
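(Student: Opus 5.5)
The plan is to specialise Theorem \ref{thm k-lin indep => k-alg indep} to $(\beta,s)=(1,1)$ and $a=0$. This is allowed because $\beta=1\in\F_q\subset\bigcup_h\F_q(\theta^{1/p^h})$. First I fix the normalisations. By construction $G_i(u,1,1)=\frac{1}{\theta\Tilde{\theta}}\Log_C(u_i)$ for $i\in[\![1,n]\!]$, and $G_0(u,1)=\frac{1}{\theta\Tilde{\theta}}\Tilde{\pi}$. So the family $(\Tilde{\pi},\Log_C(u_1),\dots,\Log_C(u_n))$ is obtained from $(G_i(u))_{i=0}^n$ by multiplying every entry by the single nonzero scalar $\theta\Tilde{\theta}$.

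This scalar lies in $\overline{k}$, which has two consequences. A $k$-linear relation among the $\Log_C(u_i)$ and $\Tilde{\pi}$ is the same as a $k$-linear relation among the $G_i(u)$, since we only divide by a common factor. Likewise, $\overline{k}$-algebraic independence is preserved under multiplying all entries by $\theta\Tilde{\theta}\in\overline{k}^*$, because a polynomial relation $P(x_0,\dots,x_n)=0$ over $\overline{k}$ becomes $P(\theta\Tilde{\theta}y_0,\dots)=0$, still a nonzero polynomial over $\overline{k}$. The hypothesis that the $u_i$ lie in $D_\infty(-\frac{q}{q-1})$ is exactly the hypothesis used in Section 2.1 to build $\mathbf{l}_i$. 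The uniformizer $u$ is chosen for $K=k[\Tilde\theta,u_1,\dots,u_n]$.

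Next I apply the theorem with $a=0$. The statement of Theorem \ref{thm k-lin indep => k-alg indep} literally writes $(G_i(u))_{i=1}^n$, but its proof treats the family $(G_i(u))_{i=0}^n$. That proof goes through Proposition \ref{prop dependance alg => dependance lineaire}, then the descent from $\mathbf{B}$ to $k$ using $\mathbf{D}$ and $\sigma$, and finally Lemma \ref{lemme somme algebrique => K-dependance} and Proposition \ref{Nishioka G_i}. Together these give: if $(G_i(u))_{i=0}^n$ is $k$-linearly independent, then $(G_i(u))_{i=0}^n$ is $\overline{k}$-algebraically independent. I will invoke that version, with $a=0$, for the family including $G_0$.

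The main obstacle is making this step honest. I must check that the chain of implications really concludes algebraic independence of the whole family including $G_0(u)$, and not only of $(G_i(u))_{i\ge1}$. I would argue by contraposition. Suppose $(G_i(u))_{i=0}^n$ is algebraically dependent over $\overline{k}$. Proposition \ref{Nishioka G_i} then gives algebraic dependence of the functions $(G_i(Z))_{i=0}^n$. Proposition \ref{prop dependance alg => dependance lineaire} then gives $\mathbf{B}$-coefficients $\eta_i$, not all zero, with $\sum\eta_iG_i\in\mathbf{u}_Z$. The descent in the proof of Theorem \ref{thm k-lin indep => k-alg indep} upgrades these coefficients to $\F_q[\theta]$, still not all zero. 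Finally Lemma \ref{lemme somme algebrique => K-dependance} produces a nontrivial $k$-linear relation among $G_0(u),\dots,G_n(u)$.

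In particular I must verify that Lemma \ref{lemme somme algebrique => K-dependance} works when $\lambda_0\neq0$. In that case $g$ contains the term $\lambda_0G_0(z^{q^{r_0d}})$, and $G_0$ has poles at the zeros of $\pi_k$. I expect this to be fine: $G_0$ is entire on $D_\infty(\frac1{qe})$, and the only terms carrying a denominator $\pi_{r_0d}$ come from the $G_i$ with $i\ge1$. Rescaling by $\theta\Tilde\theta$ then yields the proposition.
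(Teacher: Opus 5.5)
Your proposal is correct and is the paper's argument: specialise Theorem~\ref{1.5} to $(\beta,s)=(1,1)$ and $a=0$, reading the family as $(G_i(u))_{i=a}^n$ as you rightly do. Then transport linear and algebraic (in)dependence through the common factor $\theta\Tilde{\theta}\in\overline{k}^*$.

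One correction to your side check of Lemma~\ref{lemme somme algebrique => K-dependance}: the $G_0$ term \emph{does} carry the denominator $\pi_{r_0d}$. Indeed $G_0(z)=G_0(z^{q^{r_0d}})/\pi_{r_0d}(z)$ for $\beta=1$ and $G_0(u)\neq0$, so $G_0$ has a pole at $z_{r_0}$, which lies outside $D_\infty(\frac{1}{qe})$. It is this identity, not the entireness of $G_0$, that absorbs $\lambda_0G_0$ into $g(z)=\mathcal{G}(z^{q^{r_0d}})$, so the lemma does apply with $\lambda_0\neq0$.
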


This proposition is currently less general than Papanikolas's \color{blue}theorem \ref{Papanikolas} \color{black}. To address this, let $\lambda_1,...,\lambda_n$ be $n$ elements in $\C_\infty$ such that for all $i \in [\![1,n]\!]$ $\beta_i := \exp_C(\lambda_i) \in \overline{k}$.

If  $v_\infty(\beta_i) > -\frac{q}{q-1}$, we set $u_i := \beta_i$, and we have $\lambda_i = \Log_C(u_i)$.

Otherwise, we have $v_\infty(\beta_i) \leq -\frac{q}{q-1}$.
Recall that the Carlitz exponential satisfies the relation
$\exp_C(\theta z) = \theta \exp_C(z) + \exp_C(z)^q.$
We then consider $\alpha_i$ a solution of the equation $$\beta_i = \theta \alpha_i + \alpha_i^q.$$
A computation of the valuation shows that we must have $v_\infty(\alpha_i) \leq -\frac{1}{q-1}$. If this inequality is strict, we obtain that $v_\infty(\beta_i) = q v_\infty(\alpha_i)$. Otherwise we obtain that $v_\infty(\beta_i) \geq - \frac{q}{q-1}$, therefore $v_\infty(\beta_i) = - \frac{q}{q-1}$.
In any case, it follows that 
$$v_\infty(\alpha_i) = \frac{v_\infty(\beta_i)}{q} > v_\infty(\beta_i).$$
Let now $\gamma_i$ be such that $\exp_C(\gamma_i) = \alpha_i$. We thus have:
$$\exp_C(\theta \gamma_i) = \beta_i = \exp_C(\lambda_i).$$
If $v_\infty(\alpha_i) > -\frac{q}{q-1}$, we set $u_i = \alpha_i$.

Therefore, for all $i \in [\![1,n]\!]$, there exists $n_i \in \N$ and $u_i \in  D_\infty(-\frac{q}{q-1})$ such that 
$$\beta_i = \exp_C(\theta^{n_i} \Log_C(u_i)).$$
Consequently, for each $i \in [\![1,n]\!]$ there are elements $f_i \in \F_q[\theta]$ such that
$$\lambda_i = \theta^{n_i} \Log_C(u_i) + f_1 \Tilde{\pi}.$$

The linear independence over $k$ of the family $(\Tilde{\pi}, \lambda_1,...,\lambda_n)$ is thus equivalent to that of the family $(\Tilde{\pi}, \Log_C(u_1),...,\Log_C(u_n))$, and the same holds for the algebraic independence over $\overline{k}$.

We deduce from \color{blue}proposition \ref{application log carlitz} \color{black} that the linear independence of the family $(\Tilde{\pi}, \lambda_1,...,\lambda_n)$ implies its algebraic independence. Finally, it remains to eliminate the dependence on $\Tilde{\pi}$, which can be done by a suitable adaptation of the proof of \color{blue} theorem \ref{thm B lin indep => k-alg indep}\color{black}.

Assume that the family $(\lambda_1,...,\lambda_n)$ is $k$-linearly independent, but that the family $(\Tilde{\pi}, \lambda_1,...,\lambda_n)$ is $k$-linearly dependent.
There exists then a linear dependence relation $\mu_0 \Tilde{\pi} + \sum_{i=1}^n \mu_i \lambda_i = 0$ where $\mu_0 \neq 0$ such that there exists $i_0 \in [\![1,n]\!]$ verifying $\mu_{i_0} \neq 0$. 
After possibly reordering, we may assume that $i_0 = n$. The family $(\Tilde{\pi}, \lambda_1,...,\lambda_{n-1})$ is thus $k$-linearly independent.
It follows from \color{blue} proposition \ref{application log carlitz} \color{black} that the family $(\Tilde{\pi}, \lambda_1,...,\lambda_{n-1})$ is $\overline{k}$-algebraicly independent, and one may replace $\Tilde{\pi}$ by $\lambda_n$ thanks to the previously given linear dependence relation.
We thus finally obtain Papanikolas's theorem:

\begin{thm}[Theorem 1.1]
   Let $\lambda_1,...,\lambda_n \in \C_\infty$ satisfy $\exp_C(\lambda_i) \in \overline{k}$ for $i \in [\![1,n]\!]$. 
   If the family $( \lambda_i)_{i=1}^n$ is $k$-linearly independent, then it is $\overline{k}$-algebraically independent.
\end{thm}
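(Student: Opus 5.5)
The plan is to reduce to the case where all $\lambda_i$ are Carlitz logarithms of algebraic points in the convergence disc, and then use Proposition \ref{application log carlitz} together with an elimination of $\Tilde{\pi}$, exactly as outlined just before the statement.

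\emph{Step 1: reduction to logarithms in the disc.} For each $i$, set $\beta_i=\exp_C(\lambda_i)\in\overline{k}$. If $v_\infty(\beta_i)\le -\frac{q}{q-1}$, we repeatedly solve $\beta=\theta\alpha+\alpha^q$ in $\overline{k}$. Each step replaces the valuation $v$ by $v/q$, which is strictly closer to $0$, so after finitely many steps we land in $D_\infty(-\frac{q}{q-1})$. This gives $n_i\in\N$ and $u_i\in\overline{k}\cap D_\infty(-\frac{q}{q-1})$ with $\exp_C(\theta^{n_i}\Log_C(u_i))=\beta_i$. Since $\ker\exp_C=\F_q[\theta]\Tilde{\pi}$, we get
\[
\lambda_i=\theta^{n_i}\Log_C(u_i)+f_i\Tilde{\pi}, \qquad f_i\in\F_q[\theta].
\]
This change of variables is $k$-linear, triangular and invertible. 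Hence the $k$-span of $(\Tilde{\pi},\lambda_1,\dots,\lambda_n)$ equals that of $(\Tilde{\pi},\Log_C(u_1),\dots,\Log_C(u_n))$, and the two families generate the same field over $\overline{k}$. So linear and algebraic independence transfer between them.

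\emph{Step 2: the case where $\Tilde{\pi}$ is included.} Proposition \ref{application log carlitz} rests on Theorem \ref{thm k-lin indep => k-alg indep} with $(\beta,s)=(1,1)$ and $a=0$; note that $\beta=1$ satisfies its hypothesis. By it, if $(\Tilde{\pi},\Log_C(u_i))_i$ is $k$-linearly independent, it is $\overline{k}$-algebraically independent. Here one uses $\theta\Tilde{\theta}G_i(u)=\Log_C(u_i)$ and $\theta\Tilde{\theta}G_0(u)=\Tilde{\pi}$, and multiplying every element by the common nonzero algebraic factor $\theta\Tilde{\theta}$ preserves both kinds of independence. Transporting through Step 1, the family $(\Tilde{\pi},\lambda_1,\dots,\lambda_n)$ is algebraically independent whenever it is $k$-linearly independent.

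\emph{Step 3: removing $\Tilde{\pi}$.} Assume $(\lambda_i)$ is $k$-linearly independent.

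If $(\Tilde{\pi},\lambda_i)$ is also $k$-independent, Step 2 concludes.

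Otherwise there is a relation $\mu_0\Tilde{\pi}+\sum\mu_i\lambda_i=0$ with $\mu_0\ne0$, since the $\lambda_i$ alone are independent, and some $\mu_{n}\neq0$ after reordering. Then $(\Tilde{\pi},\lambda_1,\dots,\lambda_{n-1})$ is $k$-linearly independent: a relation among them would either involve $\Tilde{\pi}$, and combining it with the first relation would produce a relation among the $\lambda_i$ alone, or not involve $\Tilde{\pi}$, contradicting the hypothesis directly. By Step 2 applied to these $n$ elements, the family $(\Tilde{\pi},\lambda_1,\dots,\lambda_{n-1})$ is algebraically independent. Since $\lambda_n$ is a $k$-linear combination of it in which $\Tilde{\pi}$ appears with nonzero coefficient, $\overline{k}(\lambda_1,\dots,\lambda_n)=\overline{k}(\Tilde{\pi},\lambda_1,\dots,\lambda_{n-1})$. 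This field has transcendence degree $n$, so $(\lambda_1,\dots,\lambda_n)$ is algebraically independent.

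The main obstacle is Step 1, specifically checking the valuation bookkeeping. One must show that a root $\alpha$ of $\theta\alpha+\alpha^q=\beta$ satisfies $v_\infty(\alpha)=v_\infty(\beta)/q$ when $v_\infty(\beta)\le-\frac{q}{q-1}$. This follows from the Newton polygon, since the two terms $\theta\alpha$ and $\alpha^q$ balance at $v=-\frac{1}{q-1}$. One must also check that the iteration terminates. The rest is formal linear algebra.
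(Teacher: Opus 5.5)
Your proposal is correct and follows the paper's proof essentially step for step. You use the functional equation $\exp_C(\theta z)=\theta\exp_C(z)+\exp_C(z)^q$ to reduce to $\lambda_i=\theta^{n_i}\Log_C(u_i)+f_i\Tilde{\pi}$, apply Proposition \ref{application log carlitz} with $(\beta,s)=(1,1)$, and eliminate $\Tilde{\pi}$ through a relation with $\mu_0\mu_n\neq 0$. Your version is in fact slightly more careful than the paper's: you keep the period term $f_i\Tilde{\pi}$ even when $v_\infty(\beta_i)>-\frac{q}{q-1}$, and you justify the $k$-linear independence of $(\Tilde{\pi},\lambda_1,\dots,\lambda_{n-1})$, which the paper only asserts.
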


\section{Appendix: Proof of theorem \ref{Adam_Denis}}
The statement of the theorem \ref{Adam_Denis} is a slight improvement on the work of G. Fernandes \cite{Fernandes}, D. Adam and L. Denis \cite{Adam_Denis}. For the reader’s convenience, we provide a detailed proof below.
In fact, we prove the following result, from which Theorem \ref{Adam_Denis} follows as a corollary.

\begin{thm}{\label{Adam_Denis2}}
Let $\K \subset \overline{k}$ be a finite extension of $k$. Let $n \geq 1$,
$d \geq 2$ be two integers and $f_1(z), . . . , f_n(z) \in \K[\![z]\!]$ be $v_\infty$-adic analytic functions defined in a neighborhood of the origin satisfying the $d$-Mahler system:
$$\begin{pmatrix}
    f_1(z^d)\\ \vdots \\f_n(z^d)
\end{pmatrix} = A(z) \begin{pmatrix}
    f_1(z)\\ \vdots \\f_n(z)
\end{pmatrix}$$
where $A(z) \in \GL_n(\K(\!(z)\!))$ whose entries are algebraic over $k(z)$.

Let $\alpha \in \overline{k}\backslash\{0\}$ be such that $v_\infty(\alpha) > 0$ and such that for all $k \in \N$, the matrix $A(\alpha^{d^k})$ is well defined, invertible and all its entries lie in $\K$. 
Then the following equality holds:
$$\degtr_{k}\big(f_1(\alpha),...,f_n(\alpha)\big) = \degtr_{k(z)}\big(f_1(z),...,f_n(z)\big).$$
\end{thm}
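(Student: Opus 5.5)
The plan is to follow the strategy of Nishioka's theorem as adapted to positive characteristic by Denis, Fernandes and Adam--Denis. The inequality $\degtr_k(f_i(\alpha)) \le \degtr_{k(z)}(f_i(z))$ is the easy direction. Since the $f_i$ are analytic on a disc containing $\alpha^{d^k}$ for $k\gg 0$, any algebraic relation over $\overline{k(z)}$ among the $f_i(z)$ specializes at $\alpha$. We clear denominators after a minimal-polynomial argument, exactly as in the classical proof, using that the entries of $A$ are algebraic over $k(z)$ and well defined at the points $\alpha^{d^k}$.

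For the reverse inequality, let $t=\degtr_{k(z)}(f_1,\dots,f_n)$ and assume $f_1,\dots,f_t$ form a transcendence basis. The core is a zero estimate combined with a Siegel-type construction.

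\emph{Step 1 (auxiliary function).} Using Siegel's lemma over $\K$, with heights controlled via the $v_\infty$-adic and the other places of $\K$, we build a nonzero polynomial $E(z,X)\in \mathcal{O}_\K[z][X_1,\dots,X_n]$ of degree $\le N$ in $X$ and $\le N^{\,1+1/t}$ (roughly) in $z$. We require $R(z)=E(z,f(z))$ to have order of vanishing $\ge c N^{t+1}$ at $z=0$.

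\emph{Step 2 (zero estimate).} We need an upper bound $\operatorname{ord}_0 R \le C N^{t+1}$. This is the Nishioka multiplicity lemma for Mahler systems. Its proof uses only the functional equation $f(z^d)=A(z)f(z)$, elimination theory and Bézout. It goes through in positive characteristic as long as we work over the field $\overline{k(z)}$ containing the entries of $A$; this is precisely the point where the hypothesis that the entries of $A$ are algebraic over $k(z)$ (rather than rational) enters, by passing to a finite extension of $k(z)$ and the associated curve.

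\emph{Step 3 (iteration and specialization).} By iterating, $R(z^{d^k}) = E(z^{d^k}, A_k(z) f(z))$ with $A_k(z)=A(z^{d^{k-1}})\cdots A(z)$. We evaluate at $\alpha$: because every $A(\alpha^{d^j})$ is invertible with entries in $\K$, the quantity $R(\alpha^{d^k})$ is a polynomial in the $f_i(\alpha)$ with coefficients in $\K$ of controlled height, roughly $O(d^k N^{1+1/t})$. From Steps 1 and 2, its $v_\infty$-adic size is $-v_\infty \approx -d^k\,\operatorname{ord}_0 R \cdot v_\infty(\alpha)$, which is tiny but nonzero for suitable $k$ in a range of length $\asymp \log N$.

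\emph{Step 4 (conclusion).} Feeding these small values into a Philippon-type algebraic independence criterion, in its function field version as used by Denis, Fernandes and Adam--Denis, gives $\degtr_k(f_1(\alpha),\dots,f_n(\alpha))\ge t$. The height bookkeeping in this step requires the entries of $A(\alpha^{d^j})$ to lie in the fixed field $\K$.

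The main obstacle is Step 2, the multiplicity estimate. The classical argument implicitly uses characteristic zero (separability and derivations), and the new ingredient here is that the coefficients of $A$ are only algebraic over $k(z)$. I would first prove it by reducing to Fernandes' version over a finite extension $\K(z)(\text{entries of }A)$. To do this I would replace $z$ by a local parameter on the corresponding curve and check that the Mahler operator $z\mapsto z^d$ lifts to this curve. If the lift does not exist, I would adapt the inseparability-friendly argument of Adam--Denis, working with $d=q^{d'}$ so that Frobenius commutes appropriately.
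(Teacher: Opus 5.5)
Your plan is the classical Nishioka scheme (Siegel lemma, multiplicity estimate, iteration, criterion), and as written it has a genuine gap at Step~2. Everything downstream depends on an upper bound $\mathrm{ord}_0 R\le CN^{s+1}$ (your $t$ is the paper's $s$). You do not prove this bound, and you yourself call it the main obstacle. You only sketch conditional strategies for it: lifting $z\mapsto z^d$ to a curve, or an unspecified inseparable variant of Adam--Denis. A Nishioka-type multiplicity estimate in characteristic $p$, with the entries of $A$ merely algebraic over $k(z)$, is a substantial result in its own right. So the proposal does not yet amount to a proof.

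The missing idea is that this step is unnecessary, and so is Siegel's lemma.

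\begin{itemize}
\item The paper uses Philippon's criterion in the form ``for every $c>0$ there exist sequences''. For a given $c$ one fixes $N$ large and lets only $t$ vary. The degree then stays $N$ and only the height grows. Everything that depends solely on $N$ (the height of $R_N$, and the threshold after which the leading term of $E_N$ dominates) is absorbed into a shift $c_0(N)$.
\item Plain linear algebra gives $R_N$ with $\deg_z R_N,\deg_X R_N\le N$ and $n_N=\mathrm{ord}_0 R_N(z,f_1(z),\dots,f_s(z))\ge N^{s+1}/s!$.
\item Normalize the first nonzero Taylor coefficient of $E_N$ to $1$. The strict ultrametric inequality then gives $|E_N(\alpha^{d^t})|_\infty=|\alpha|_\infty^{n_Nd^t}$ \emph{exactly} for all $t\ge c_0(N)$.
\item Hence $\epsilon_t$ and $\rho_t$ are both fixed multiples of $d^{t+c_0(N)}n_N$. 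Condition~(5) then reduces to $n_N\gg N^{s}$, which follows from the \emph{lower} bound on $n_N$.
\end{itemize}

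An upper bound on the order of vanishing never enters. There is also no need to find a good $k$ in a range of length $O(\log N)$.

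The hypothesis that the entries of $A$ are algebraic over $k(z)$ is not used in any zero estimate. It is what makes true the height bound you merely assert in Step~3. For each entry $g$ of $A$, evaluate an equation $b_m(z)g^m=\sum_{i<m}b_i(z)g^i$ at $\alpha^{d^k}$. This gives $g(\alpha^{d^k})$ a denominator of degree $O(d^k)$ (via a norm of $b_m(\alpha^{d^k})$) and a house $O(d^k)$. Multiplying these bounds along the iteration yields $h(P_{N,t})\le c_4d^{t+c_0(N)}N$.

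Your treatment of the easy inequality is fine.
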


\begin{proof}[Proof of theorem \ref{Adam_Denis2} $\implies$ theorem \ref{Adam_Denis}]
Let $H= \overline{\F_q}[\![Z]\!] \cap \overline{\F_q(Z)}$. We first have the following inclusion:
$$\K(H) \subset \overline{k}[\![Z]\!] \cap \overline{k(Z)}.$$
Now let $h(Z) \in H$. Since $h(Z)$ is algebraic over $\F_q(Z)$, there exists an integer $l$ such that $h(Z) \in \F_{q^l}[\![Z]\!]$. For all $k \in \N$, write $kd = \lambda l + \mu$, $\mu <l$. Thus, we have:
$$h(Z^{q^{kd}}) = h(Z^{q^{\lambda l + \mu}}) = h(Z^{q^{\mu}})^{q^{\lambda l}}.$$
Therefore, there exists $\L$ a finite extension of $\K$ such that for all $k \in \N$, $h(\alpha^{q^{kd}}) \in \L$.
Thus, after possibly replacing $\K$ by a finite extension, all the assumptions of theorem $\ref{Adam_Denis2}$ hold.
\end{proof}

This proof of theorem \ref{Adam_Denis2} essentially follows the method used by G. Fernandes in \cite{Fernandes} and adaptated by D. Adam and L. Denis in \cite{Adam_Denis}. We will need the following notions:
\begin{defn}
    Let $F$ be a family of elements in $\overline{k}$. We call a denominator of $F$ any nonzero polynomial $D \in \F_q[\theta]$ such that for all $x \in F$, the element $Dx$ is integral over $k$.
\end{defn}

\begin{defn}
    Let $\alpha \in \overline{k}$. We call house of $\alpha$ the element 
    $$\house{\alpha} = \max\{ -v_\infty\big(\sigma(\alpha)\big), \sigma \in \Hom_k(k(\alpha), \C_\infty) \}$$
\end{defn}

\begin{prop}
    Let $x,y \in \overline{K}$ and $n \in \N$. The following properties hold:\\
    • $\house{x+y}\leq \max\{\house{x}, \house{y}\}$;\\
    • $\house{x.y} \leq \house{x} + \house{y}$;\\
    • $\house{x^n} = n.\house{x}$.
\end{prop}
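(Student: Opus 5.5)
The three properties of the house should all follow from the corresponding properties of the valuation $v_\infty$, applied to every $k$-embedding at once. The key preliminary is to move to a common field. Let $L = k(x,y)$, which contains $k(x)$, $k(y)$, $k(x+y)$ and $k(xy)$. Every $k$-embedding $\tau : k(x)\to\C_\infty$ extends to some embedding $\tilde\tau : L \to \C_\infty$, because $\C_\infty$ is algebraically closed. Conversely, every embedding of $L$ restricts to an embedding of $k(x)$. Therefore
$$\house{x} = \max\{ -v_\infty(\tau(x)),\ \tau \in \Hom_k(L,\C_\infty)\},$$
and the same holds for $y$, $x+y$ and $xy$. So all houses may be computed as maxima over the single finite set $\Hom_k(L,\C_\infty)$. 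This reduction is the only point requiring care; everything else is pointwise.

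Fix $\tau \in \Hom_k(L,\C_\infty)$. Since $\tau$ is a ring homomorphism, $\tau(x+y) = \tau(x)+\tau(y)$ and $\tau(xy) = \tau(x)\tau(y)$. The ultrametric inequality $v_\infty(a+b) \geq \min(v_\infty(a),v_\infty(b))$ then gives
$$-v_\infty(\tau(x+y)) \leq \max\big(-v_\infty(\tau(x)), -v_\infty(\tau(y))\big) \leq \max(\house{x},\house{y}).$$
Taking the maximum over $\tau$ yields the first property. Multiplicativity, $v_\infty(ab) = v_\infty(a)+v_\infty(b)$, gives
$$-v_\infty(\tau(xy)) = -v_\infty(\tau(x)) - v_\infty(\tau(y)) \leq \house{x}+\house{y},$$
and maximizing over $\tau$ yields the second property.

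For the third property, first dispose of $n=0$ and $x=0$ with the convention $\house{0}=-\infty$. Otherwise $-v_\infty(\tau(x^n)) = n\cdot(-v_\infty(\tau(x)))$ for each $\tau$. Since $n\geq 0$, multiplication by $n$ preserves the maximum, so taking the maximum over $\tau \in \Hom_k(k(x),\C_\infty)$ gives equality: $\house{x^n} = n\,\house{x}$. Here we use the reduction again, because the embeddings of $k(x^n)$ are exactly the restrictions of the embeddings of $k(x)$.

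I expect no real obstacle. The point worth stating explicitly is the extension and restriction of embeddings to a common field, since the definition of the house refers to $k(\alpha)$ rather than to a fixed ambient field.
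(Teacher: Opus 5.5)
Your proof is correct. The paper states this proposition without proof, treating it as standard. Your argument is exactly the routine verification being taken for granted: compute every house as a maximum over the single finite set $\Hom_k(k(x,y),\C_\infty)$ by extending and restricting embeddings, then apply the ultrametric inequality and the multiplicativity of $v_\infty$ embedding by embedding.

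The only loose end is the degenerate case $x=0$, $n=0$. There the identity reads $\house{1}=0\cdot(-\infty)$ and holds only under the convention $0\cdot(-\infty)=0$; your argument covers every other case.
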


\begin{defn}
Let $\K \subset \overline{k}$ be a finite field extension of $k$ and $a\in k$. We define the  absolute logarithmic Weil height of a by:
$$h(a) = \frac{1}{[\K:k]}\sum_{\omega \in \mathcal{M}_k}d_w \log(\max\{1,|a|_\omega\}).$$
Given a family $\alpha_1,...,\alpha_n \in \overline{k}$, we set:
$$h(\alpha_1,...,\alpha_n)=  \frac{1}{[\K:k]}\sum_{\omega \in \mathcal{M}_k}d_w
\log(\max\{1,|\alpha_1|_\omega,...,|\alpha_n|_\omega \}).$$
Given a polynomial $P \in \K[X_0,...,X_n]$, we define $h(P)$ by the height of the family of its coefficients.\\
\end{defn}

\begin{prop}{\label{inegalite hauteur maison denominateur}}
Let $\alpha_1,...,\alpha_n$ be elements in $\overline{k}$, and let $D$ be a denominator of these elements. The following inequality holds:
$$h(\alpha_1,...,\alpha_n)\leq \max\{0,\house{\alpha_1},...,\house{\alpha_n}\} + \deg_\theta(D).$$
\end{prop}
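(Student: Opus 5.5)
We will prove the inequality by choosing a finite extension $\K$ of $k$ containing $\alpha_1,\dots,\alpha_n$, computing the height in $\K$, and splitting the sum over the places $\omega$ of $\K$ into the places above $\infty$ and the finite places. We normalize the absolute values so that $\log|x|_\omega = -v_\omega(x)$, with $v_\omega$ extending $v_\infty$ at infinite places, so that $\log|\theta|_\infty = 1$. We also normalize the local degrees $d_\omega$ so that the product formula $\sum_\omega d_\omega \log|x|_\omega = 0$ holds for $x \in \K^*$, and so that $\sum_{\omega \mid \infty} d_\omega = [\K:k]$. Since the height does not depend on the choice of $\K$, this choice is harmless.

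\emph{Infinite places.} Each place $\omega$ of $\K$ above $\infty$ comes from some embedding $\sigma \in \Hom_k(\K,\C_\infty)$, with $|x|_\omega = \exp(-v_\infty(\sigma(x)))$. Restricting $\sigma$ to $k(\alpha_i)$ gives $-v_\infty(\sigma(\alpha_i)) \le \house{\alpha_i}$. Hence
$$\log\max\{1,|\alpha_1|_\omega,\dots,|\alpha_n|_\omega\} \le \max\{0,\house{\alpha_1},\dots,\house{\alpha_n}\}.$$
Summing with weights $d_\omega$ and using $\sum_{\omega\mid\infty} d_\omega = [\K:k]$, the infinite part of $h$ is at most $\max\{0,\house{\alpha_i}\}$ after division by $[\K:k]$.

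\emph{Finite places.} For a finite place $\omega$, each $D\alpha_i$ is integral over $\F_q[\theta]$, so $|D\alpha_i|_\omega \le 1$. Since $D \in \F_q[\theta]$, we also have $|D|_\omega \le 1$. Therefore
$$\max\{1,|\alpha_1|_\omega,\dots,|\alpha_n|_\omega\} \le |D|_\omega^{-1}.$$
Summing over finite places and applying the product formula to $D$ gives
$$\sum_{\omega\nmid\infty} d_\omega \log|D|_\omega^{-1} = \sum_{\omega\mid\infty} d_\omega \log|D|_\omega = [\K:k]\deg_\theta(D).$$
The last equality holds because $|D|_\omega = |\theta|_\infty^{\deg D}$ at every infinite place. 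Dividing by $[\K:k]$ and adding the two contributions yields the stated inequality.

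The only delicate point is bookkeeping rather than mathematics. One must check that the normalizations of $d_\omega$ and $|\cdot|_\omega$ in the definition of $h$ are such that the product formula holds, and that the infinite local degrees sum to $[\K:k]$; these are the standard conventions making $h$ independent of $\K$. In particular, an infinite place of $\K$ corresponds to a group of embeddings of $\K$, but the bound $-v_\infty(\sigma(\alpha_i)) \le \house{\alpha_i}$ holds uniformly for all of them, so no care about inseparability or grouping is needed.
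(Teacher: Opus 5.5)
Your proof is correct. The paper states this proposition without proof, as a standard fact, so there is no argument in the paper to compare against. What you give is the standard argument: bound each place above $\infty$ by the house via the corresponding embedding, bound each finite place by $|D|_\omega^{-1}$, and turn the finite contribution into $\deg_\theta(D)$ by applying the product formula to $D\in k$.

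Two of your interpretive choices are actually forced for the statement to hold as written. First, the denominator condition must be read as integrality over $\F_q[\theta]$; integrality over $k$, as the paper's definition literally says, is vacuous. Second, the absolute values must be normalized so that $\log|\theta|_\infty = 1$. With the usual convention $|\theta|_\infty = q$ and natural logarithms one gets $h(\theta)=\log q$, while the house of $\theta$ is $1$ and $D=1$ is a denominator. The inequality would then fail for $q\geq 3$ and hold only up to a factor $\log q$.
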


\subsection{An algebraic independence criterion}\text{ }

The proof of the theorem relies on an algebraic independence criterion due to P. Philippon \cite[Theorem 2]{Philippon}\color{black} :

\begin{thm}{(Philippon, cf. \cite{Philippon},\cite{Fernandes})}{\label{philippon}} $\text{ }$

Let $\omega = (1, \omega_1,...,\omega_n) \in \C_\infty^{n+1}$ and $s \in [\![0,n]\!]$. Assume that for all constant $c > 0$, there exists four non-decreasing real sequences $(\delta_t)_{t \in \N}$, $(\sigma_t)_{t \in \N}$, $(\epsilon_t)_{t \in \N}$ and $(\rho_t)_{t \in \N}$ with values greater than 1 such that:\\
(1) $\delta_t \leq \sigma_t$;\\
(2) $\epsilon_t \leq \rho_{t+1}$;\\
(3) $\delta_t + \sigma_t \underset{t \to \infty}{\longrightarrow} \infty$;\\
(4) The sequence $\left( \frac{\epsilon_t}{(\delta_t+ \sigma_t) \delta_t^s} \right)_{t \in \N}$ is non-decreasing;\\
(5) $\frac{\epsilon_t^{s+1}}{\delta_t^{s-1}(\epsilon_{t+1}^s + \rho_{t+1}^s)} \geq c (\delta_t + \sigma_t) $.\\
Assume furthermore that for all non negative integer $t$, there exists an homogeneous polynomial $P_t \in \K[X_0,...,X_n]$ satisfying the following conditions:\\
(6) $\deg(P_t) \leq \delta_t$;\\
(7) $h(P_t) \leq \sigma_t$ ;\\
(8) $-\rho_t \leq \log|P_t(\omega)| \leq - \epsilon_t$.\\
Then we have: 
$$degtr_k\{\omega_1,...,\omega_n\} \geq s.$$
\end{thm}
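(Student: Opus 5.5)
This criterion is quoted from \cite{Philippon} (and its positive characteristic form from \cite{Fernandes}); here is how I would reprove it, following Philippon's elimination-theoretic argument. I argue by contradiction. Suppose $\degtr_k\{\omega_1,\dots,\omega_n\} = r < s$. Let $V \subset \mathbb{P}^n$ be the Zariski closure over $\K$ of the point $\omega$, an irreducible variety of dimension $r$. Its degree and height are fixed constants, depending only on $\omega$.

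\textbf{Descent in dimension.} I attach to every cycle $Z$ defined over $\K$ three quantities:
\begin{itemize}
\item its degree $\deg Z$;
\item its height $h(Z)$, defined through Chow forms;
\item a \emph{distance} $\mathrm{Dist}(\omega, Z)$, measured by the absolute value at $\omega$ of the Chow form of $Z$ evaluated on suitable generic linear forms.
\end{itemize}
Starting from $Z_r = V$, I build cycles $Z_{r-1}, \dots, Z_0$ of decreasing dimension. At step $j$, I choose a component $W$ of $Z_j$ closest to $\omega$. Using (8), $|P_t(\omega)| \geq e^{-\rho_t}$ together with $|P_t(\omega)| \leq e^{-\epsilon_t}$, I select an index $t$ such that $P_t$ does not vanish identically on $W$ but is small at $\omega$. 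This selection uses the monotonicity (4) and the comparison of $\epsilon_t$ with $\rho_{t+1}$ in (2). I then set $Z_{j-1} = W \cdot \mathrm{div}(P_t)$. Since $\dim V < s$, at most $s$ intersections are needed.

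\textbf{Estimates at each intersection.} Three bounds control each step.
\begin{itemize}
\item The arithmetic B\'ezout theorem over the function field gives $\deg Z_{j-1} \leq \delta_t \deg W$ and $h(Z_{j-1}) \leq \delta_t h(W) + \sigma_t \deg W$, up to constants.
\item A Nesterenko-type metric lemma gives: if $P_t(\omega)$ is small and the distance from $\omega$ to $W$ is small, then the distance from $\omega$ to $Z_{j-1}$ decreases by roughly $\epsilon_t$, minus error terms involving $\deg W \cdot \rho_{t+1}$.
\item In the non-archimedean setting the metric lemma is simpler, because all triangle inequalities become ultrametric.
\end{itemize}
Iterating, I reach a $0$-dimensional cycle $Z_0$ with degree $\ll \delta^{\,r}$, height $\ll (\delta+\sigma)\delta^{r-1}$, and distance to $\omega$ at most $\exp(-c'\epsilon^{r+1}/(\delta^{r-1}(\epsilon^{r}+\rho^{r})))$. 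Condition (5) with $c$ large then makes this distance smaller than $\exp(-c(\delta_t+\sigma_t)\deg Z_0)$.

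\textbf{Conclusion and main obstacle.} To conclude, I apply a Liouville-type inequality to $Z_0$. A nonzero $0$-cycle over $\K$ of degree $D$ and height $H$ satisfies $\mathrm{Dist}(\omega, Z_0) \geq \exp(-c_0(H + D))$, with $c_0$ depending only on $\omega$. This follows from the product formula applied to the Chow form evaluated at $\omega$, which is a nonzero algebraic element precisely because the components were chosen so that the $P_t$ do not vanish on them. Taking $c > c_0$ contradicts the upper bound obtained above, so $r \geq s$.

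The main obstacle is the metric step. One must prove the distance-decrease lemma for an intersection with a hypersurface, and also justify the choice of $t$ at each stage: hypotheses (4) and (5) must guarantee the existence of an index $t$ for which $P_t$ is simultaneously non-vanishing on the current component and small enough relative to its degree and height. I would handle this as Philippon does, by taking $t$ minimal such that $\epsilon_t$ exceeds a threshold determined by $\deg W$ and $h(W)$. I would then use the lower bound $|P_t(\omega)| \geq e^{-\rho_t}$ to show that $P_t$ cannot vanish on $W$, since otherwise the earlier distance estimate would be contradicted.
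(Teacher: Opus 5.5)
The paper does not prove this statement. It imports it as a black box from Philippon's Theorem~2 (formulated for diophantine rings, which covers finite extensions of $k$) via Fernandes, so your reconstruction must stand on its own. Its architecture is the right one: descent through cycles of decreasing dimension cut out by the $P_t$, arithmetic B\'ezout, a metric intersection lemma, and non-vanishing of $P_t$ on the current component deduced from $\log|P_t(\omega)|\ge-\rho_t$. However, the endgame is false as stated, and the descent stops one step too early.

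Your final step asserts that any nonzero $0$-cycle $Z_0$ over $\K$ satisfies $\mathrm{Dist}(\omega,Z_0)\ge\exp(-c_0(H+D))$, on the grounds that the Chow form ``evaluated at $\omega$'' is a nonzero algebraic element. This fails in both cases.
\begin{itemize}
\item If $r\ge1$, then $\omega$ is transcendental and that value is in general not algebraic. No such bound exists: algebraic points can approximate a transcendental point far better than this. For $n=1$, the partial sums of $\sum_j\theta^{-j!}$ have height $N!$ and distance $|\theta|_\infty^{-(N+1)!}$.
\item If $r=0$, your scheme performs no intersection at all. Then $Z_0=V=\{\omega\}$ has distance $0$ and the $P_t$ are never used. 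So even the case $s=1$, which is just Liouville's inequality for the nonzero algebraic element $P_t(\omega)$, is not obtained.
\end{itemize}

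What is missing is an $(r+1)$-st intersection. Once $Z_0=\sum_\alpha m_\alpha\,\alpha$ is reached, use the same non-vanishing argument to choose an index $t$ with $P_t(\alpha)\neq0$ on the support of $Z_0$. Then pass to the dimension $-1$ cycle $Z_0\cdot\mathrm{div}(P_t)$. Its Chow form is a constant, essentially $\prod_\alpha P_t(\alpha)^{m_\alpha}$, which is a nonzero element of $\K$ that does not depend on $\omega$. The product formula bounds it below by $\exp(-O(\deg Z_0\,\sigma_t+\delta_t\,h(Z_0)))$. The metric lemma bounds it above in terms of $|P_t(\omega)|$ and $\mathrm{Dist}(\omega,Z_0)$.

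This comparison is what yields the contradiction, and it is why $r+1\le s$ intersections are needed. For $r=s-1$, the height of this resultant is of order $(\delta+\sigma)\delta^{s-1}$. When $\epsilon\approx\rho$, condition (5) reads $\epsilon\gtrsim c(\delta+\sigma)\delta^{s-1}$, which is exactly what is needed to beat it. The Liouville step must be applied to this resultant, not to the distance from $\omega$ to a $0$-cycle.
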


\begin{cor}{(Fernandes, cf. \cite{Fernandes} et \cite{Adam_Denis})}{\label{cor philippon}} $\text{ }$

Let $d$ be an integer $\geq 2$, $\omega = (1, \omega_1,...,\omega_n) \in \C_\infty^{n+1}$, $s \in [\![0,n]\!]$ and $c_1,c_2,...,c_6$ be positive constants independent of $N$ and $t$ with $c_5 \geq c_6$. 
Let $(n_N)_{N \in \N}$ also be a sequence of integers such that:\\
• $n_N \geq c_2 N^{s+1}$.\\
Assume that for all integers $N \geq c_1$ and $t \geq 0$, there exists a positive number $c_0(N)$ depending only on $N$ and a homogeneous polynomial $R_{N,t} \in \K[X_0,...,X_n]$ satisfying:\\ 
• $\deg(R_{N,t})\leq c_3N$;\\
• $h(R_{N,t}) \leq c_4 d^{t+c_0(N)} N$;\\
• $-c_5d^{t+c_0(N)} n_N \leq \log|R_{N,t}(\omega)| \leq -c_6 d^{t+c_0(N)} n_N$.\\
Then we have: 
$$degtr_k\{\omega_1,...,\omega_n\} \geq s.$$
\end{cor}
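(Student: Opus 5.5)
We derive Corollary \ref{cor philippon} from Theorem \ref{philippon} by taking a suitable subsequence of the polynomials $R_{N,t}$, with $N$ a large parameter fixed in terms of the constant $c$. Fix $c>0$ and choose an integer $N\geq c_1$, to be made large at the end. For $t\in\N$ we set
$$P_t := R_{N,t},\qquad \delta_t := c_3N,\qquad \sigma_t := c_4 d^{t+c_0(N)}N,\qquad \epsilon_t := c_6 d^{t+c_0(N)} n_N,\qquad \rho_t := c_5 d^{t+c_0(N)} n_N .$$
Conditions (6), (7), (8) are then exactly the three hypotheses on $R_{N,t}$. All four sequences are non-decreasing. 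After enlarging $N$ or the constants harmlessly, we may assume they are at least $1$ and that $c_3\leq c_4$, so that (1) holds for every $t$. Condition (3) holds because $\sigma_t\to\infty$.

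For (2) we need $c_6 d^{t}\leq c_5 d^{t+1}$. This holds because $c_5\geq c_6$ and $d\geq 2$.

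For (4), the quotient $\epsilon_t/((\delta_t+\sigma_t)\delta_t^s)$ equals
$$\frac{c_6 n_N}{(c_3N)^s}\cdot\frac{d^{t+c_0}}{c_3N + c_4d^{t+c_0}N}.$$
This is non-decreasing in $t$, since $x\mapsto x/(a+bx)$ is increasing for $a,b>0$.

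The main step is (5). Using $\epsilon_{t+1}\leq\rho_{t+1}$, the denominator is at most $2\rho_{t+1}^s = 2c_5^s d^{s(t+1+c_0)}n_N^s$. The left side of (5) is therefore at least
$$\frac{c_6^{s+1} d^{(s+1)(t+c_0)} n_N^{s+1}}{(c_3N)^{s-1}\, 2c_5^s d^{s(t+1+c_0)} n_N^s} = \frac{c_6^{s+1}}{2c_5^s d^s c_3^{s-1}}\cdot\frac{d^{t+c_0}\, n_N}{N^{s-1}}.$$
The right side satisfies $c(\delta_t+\sigma_t)\leq c(c_3+c_4)d^{t+c_0}N$. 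The factor $d^{t+c_0}$ cancels, so it suffices to have
$$n_N \geq C\,c\,N^{s},$$
with $C$ depending only on $c_3,c_4,c_5,c_6,d,s$. Since $n_N\geq c_2N^{s+1}$, this holds as soon as $N\geq Cc/c_2$.

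The case $s=0$ needs a separate check, because the factor $\delta_t^{s-1}$ then sits in the numerator. The same computation gives the requirement $n_N\gtrsim c\,N^{0}$, which again follows from $n_N\geq c_2N$ for $N$ large.

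With $N$ chosen this way, Theorem \ref{philippon} applies and gives $\degtr_k\{\omega_1,\dots,\omega_n\}\geq s$. The only delicate points are the cancellation of the exponential factor in (5) and the bookkeeping ensuring the sequences are at least $1$ and satisfy (1).
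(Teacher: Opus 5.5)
Your proof is correct and follows the paper's argument essentially step by step: you take the same four sequences $\delta_t=c_3N$, $\sigma_t=c_4d^{t+c_0(N)}N$, $\epsilon_t=c_6d^{t+c_0(N)}n_N$, $\rho_t=c_5d^{t+c_0(N)}n_N$ and the same polynomials $P_t=R_{N,t}$. As in the paper, condition (5) reduces, after cancelling $d^{t+c_0(N)}$, to $n_N\geq C\,c\,N^{s}$, which $n_N\geq c_2N^{s+1}$ gives once $N$ is large. The remaining differences are cosmetic: you bound $\epsilon_{t+1}^s+\rho_{t+1}^s$ by $2\rho_{t+1}^s$, and you secure (1) by enlarging $c_4$ rather than $c_0(N)$, which is actually the cleaner choice since enlarging $c_0(N)$ would tighten the upper bound in the third hypothesis.
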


\begin{proof}
Assume that all the assumption of the corollary are satisfied. Let us consider a positive constant $c$ and an integer $N \in \N$ chosen sufficiently large. We set:\\
• $\delta_t = c_3N$;\\
• $\sigma_t = c_4 d^{t+c_0(N)} N$;\\
• $\epsilon_t = c_6 d^{t+c_0(N)} n_N$;\\
• $\rho_t = c_5 d^{t+c_0(N)} n_N$;\\
Our task is therefore to prove that these sequences fulfill the conditions of \color{blue}theorem \ref{philippon}\color{black}.
The verification of conditions (6), (7) and (8) is immediate. Since $c_5 \geq c_6$, we have that $\rho_t \geq \epsilon_t$. It follows that condition (2) holds. 
Conditions (1) and (3) also follow naturally, up to increasing $c_0(N)$ for (1).
Let us verify condition (4):
$$u_t := \frac{\epsilon_t}{(\delta_t+ \sigma_t) \delta_t^s} = \frac{c_0 d^{t+c_0(N)} n_N}{(c_3 + c_4d^{t+c_0(N)})c_3^sN^{s+1}};$$

$$\frac{u_{t+1}}{u_t} = \frac{d(c_3 + c_4d^{t+c_0(N)})}{c_3 + c_4 d^{t+1+c_0(N)}} = 1 + \frac{c_3(d-1)}{c_3 + c_4 d^{t+1+c_0(N)}} > 1.$$
Hence is this sequence non-decreasing. We now turn to condition (5):

$$\frac{\epsilon_t^{s+1}}{\delta_t^{s-1}(\epsilon_{t+1}^s + \rho_{t+1}^s)} = \frac{c_6^{s+1} }{c_3^{s-1}d^s(c_6^s + c_5^s)} \frac{d^{t+c_0(N)}n_N}{N^{s-1}}.$$
Now, the sequence $(n_N)_{N \in \N}$ is defined so as to satisfy $n_N \geq c_2 N^{s+1}$. We thus obtain that:
$$\frac{\epsilon_t^{s+1}}{\delta_t^{s-1}(\epsilon_{t+1}^s + \rho_{t+1}^s)} \geq c_7 d^{t+c_0(N)} N^2 $$
where $c_7 = \frac{c_2 c_6^{s+1}}{c_3^{s-1}d^s(c_6^s + c_5^s)}$ is a positive constant independent of $N$ and $t$.
Since $\displaystyle\frac{\delta_t + \sigma_t}{d^{t+c_0(N)} N^2} = \frac{c_3}{d^{t+c_0(N)}N} + \frac{c_4}{N} \leq \frac{c_3 + c_4}{N}$, we have that $\delta_t + \sigma_t = \underset{N \to \infty}{o}(d^t N^2) $ uniformly in $t$. For a sufficiently large $N$, it follows that $c(\delta_t + \sigma_t) \leq c_7 d^t N^2$, which thus satisfies the last condition of \color{blue} theorem \ref{philippon} \color{black} for $P_t = R_{N,t}$.
\end{proof}

\subsection{Two preliminary lemmas}\text{ }

\begin{lm}{\label{lemme construction R_N}}
Let $f_1(z), . . . , f_s(z) \in \K[\![z]\!]$ be algebraicly independent function over $\K(z)$ and $N$ be an integer such that $N > s!$.
Then there exists a nonzero polynomial $R_N(z,X_1,...,X_{s}) \in \K[z,X_1,...,X_{s}]$ verifying the following conditions:\\
(1) $\deg_z(R_N) \leq N$;\\
(2) $\deg_X(R_N) \leq N$;\\
(3) $n_N := ord_{z=0}(R_N(z,f_1(z),...,f_{s}(z))) \geq \frac{N^{s+1}}{s!} := c_2 N^{s+1}$.
\end{lm}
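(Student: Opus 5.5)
The plan is the standard Siegel-lemma construction: a linear-algebra count over $\K$. There is no height estimate to worry about, since the lemma asks for no height bound. I would look for $R_N$ in the $\K$-vector space $V_N$ spanned by the monomials $z^{a}X_1^{b_1}\cdots X_s^{b_s}$ with $0\le a\le N$ and $b_1+\dots+b_s\le N$. This space has dimension
$$\dim_\K V_N=(N+1)\binom{N+s}{s}\ \ge\ (N+1)\frac{(N+1)\cdots(N+s)}{s!}\ >\ \frac{N^{s+1}}{s!}.$$
For $R\in V_N$, write
$$R(z,f_1(z),\dots,f_s(z))=\sum_{m\ge 0} c_m(R)\,z^m .$$
Each coefficient $c_m(R)$ is a $\K$-linear form in the coefficients of $R$. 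This uses the fact that $f_j\in\K[\![z]\!]$, so every coefficient of $z^a f_1^{b_1}\cdots f_s^{b_s}$ lies in $\K$.

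Next, set $M:=\lceil N^{s+1}/s!\rceil$. Imposing $c_m(R)=0$ for $0\le m<M$ gives $M$ homogeneous linear equations in $\dim V_N$ unknowns. Since $M\le N^{s+1}/s!+1\le \dim V_N-1$ (the gap in the display above exceeds $1$ for $N\ge1$), the system has a nonzero solution $R_N\in V_N$. Conditions (1) and (2) then hold by construction.

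For (3), I would invoke the algebraic independence of $f_1,\dots,f_s$ over $\K(z)$. Since $R_N\neq 0$, it follows that $R_N(z,f_1,\dots,f_s)\neq 0$, so its order $n_N$ at $z=0$ is finite and at least $M\ge N^{s+1}/s!$. The hypothesis $N>s!$ is not really needed for this count; it only guarantees comfortably that $N^{s+1}/s!\ge 1$ and that the inequality has margin, and I would just note this.

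The only point requiring care is this nonvanishing of $R_N(z,f)$. That is exactly where algebraic independence enters. The rest is the dimension inequality $(N+1)\binom{N+s}{s}>N^{s+1}/s!+1$, which is elementary.
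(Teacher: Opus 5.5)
Your proposal is correct and is essentially the paper's argument. You count $(N+1)\binom{N+s}{s}$ unknown coefficients in $\K$ against at most $\lceil N^{s+1}/s!\rceil$ linear conditions, and use algebraic independence over $\K(z)$ to ensure $R_N(z,f_1,\dots,f_s)\neq 0$; the paper records this nonvanishing a bit later, when it defines $E_N$.

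The only difference is that the paper passes through the cruder bound $(N+1)^{s+1}/s!$, which is where it needs $N>s!$, whereas your exact count shows that hypothesis is superfluous. One small point: your chain $M\le N^{s+1}/s!+1\le \dim_\K V_N-1$ literally requires a gap of $2$, but $M<N^{s+1}/s!+1<\dim_\K V_N$ together with integrality already suffices, and the gap is in fact at least $3$ for $N\ge 1$.
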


\begin{proof}
Let us view $R_N$ as a polynomial in the $X_i$ with coefficients in $\K[z]$. By the degree bound (2), $R_N$ can be written as a sum of $\binom{s+N}{N}$ monomials with coefficients in $\K[z]$.    
Each of these coefficients is a polynomial in $z$ with coefficients in $\K$, of degree $\leq N$ by (1), and therefore has $(N+1)$ coefficients.

The functions $f_1(z),....,f_{s}(z)$ being algebraicly independent over $\K(z)$, the condition (3) amounts to consider a system with $(N+1)\binom{s+N}{N}$ unknowns and at most $\left\lceil \frac{N^{s+1}}{s!} \right\rceil $ equations.
Using the upper bound:
$$(N+1)\binom{s+N}{N} \geq \frac{(N+1)^{s+1}}{s!} > \frac{N^{s+1}}{s!} + 1 \geq \left\lceil \frac{N^{s+1}}{s!} \right\rceil $$

where the penultimate inequality follows from the assumption that $N > s!$. This system therefore has strictly more unknowns than equations, and thus has infinitely many solutions, in particular a non-trivial one.
\end{proof}

\begin{lm}{\label{lemme majoration hauteur denominateur alg}}
    Let $g \in \K(\!(z)\!)$ and $\alpha \in \overline{k}$ with $v_\infty(\alpha) > 0$ be such that $g(\alpha)$ converges. Moreover, assume that $g$ is algebraic over $k(z)$.
    Then there exists two positive real constants $\mathfrak{c}_1$ and $\mathfrak{c}_2$ such that for all $k \in \N$ :\\ 
• there exists a denominator $D_k$ of $g(\alpha^{d^k})$ such that $\deg_\theta(D_k) \leq \mathfrak{c}_1 d^k$;\\
• $\house{g(\alpha^{d^k})} \leq \mathfrak{c}_2d^k$.
\end{lm}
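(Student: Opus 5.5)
The plan is to use an algebraic equation satisfied by $g$ and substitute $z=\alpha^{d^k}$, so that $g(\alpha^{d^k})$ becomes a root of a polynomial over $k$ whose coefficients are controlled linearly in $d^k$. Since $g$ is algebraic over $k(z)$, clearing denominators gives a nonzero polynomial
$$P(z,Y)=\sum_{j=0}^{m} a_j(z)Y^j,\qquad a_j(z)\in \F_q[\theta][z],\ a_m\neq 0,$$
with $P(z,g(z))=0$. We may take it irreducible, hence separable in $Y$ because $\K(\!(z)\!)/k(z)$ is separable. Put $x_k:=g(\alpha^{d^k})$ and $\alpha_k:=\alpha^{d^k}$.

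\textbf{Step 1 (evaluation).} The identity $P(z,g(z))=0$ holds in $\K[\![z]\!]$, and all series converge on a disc containing the $\alpha_k$, since $v_\infty(\alpha_k)=d^k v_\infty(\alpha)\geq v_\infty(\alpha)$. Hence $P(\alpha_k,x_k)=0$.

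There is one issue: $a_m(\alpha_k)$ might vanish for finitely many $k$. Because $v_\infty(\alpha_k)\to\infty$ and $a_m$ has finitely many roots, this happens for at most finitely many $k$. Those $k$ are absorbed into the constants, using any denominator and house for the finitely many algebraic numbers $x_k$ involved. For the exceptional $k$ one could alternatively use a non-vanishing leading coefficient obtained after a Weierstrass-type shift, but the finiteness argument suffices.

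\textbf{Step 2 (denominator).} Let $D_0\in\F_q[\theta]$ be a denominator of $\alpha$. Then $D_0^{d^k}$ is a denominator of $\alpha_k$. Writing $\delta=\deg_z P$, the element $b_j:=D_0^{\delta d^k}a_j(\alpha_k)$ is integral over $\F_q[\theta]$. Multiplying the equation $\sum_j b_j x_k^j=0$ by $b_m^{m-1}$ shows that $b_m x_k$ is integral.

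We still need a denominator lying in $\F_q[\theta]$. For this we replace $b_m$ by its norm $N_k:=\Norm_{k(\alpha)/k}(b_m)$, which is a multiple of $b_m$ in the integral closure. Then $D_k:=N_k$ is a denominator of $x_k$. Its degree is bounded as follows. Writing $N_k$ as a product of conjugates $\sigma(b_m)$, we get
$$\deg_\theta N_k=-\sum_\sigma v_\infty(\sigma(b_m))\le [k(\alpha):k]\bigl(\delta d^k\deg D_0+h_\theta(a_m)+\delta d^k\house{\alpha}\bigr),$$
where $h_\theta(a_m)$ is the maximal $\theta$-degree of the coefficients of $a_m$. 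This gives $\deg_\theta D_k\le \mathfrak{c}_1 d^k$.

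\textbf{Step 3 (house).} For each embedding $\tau$, the conjugate $\tau(x_k)$ is a root of $\sum_j \tau(a_j(\alpha_k))Y^j$. By the non-archimedean root bound (Newton polygon),
$$-v_\infty(\tau(x_k))\le \max_{j<m}\frac{v_\infty(\tau(a_m(\alpha_k)))-v_\infty(\tau(a_j(\alpha_k)))}{m-j}.$$
The term $-v_\infty(\tau(a_j(\alpha_k)))$ is at most $C+\delta d^k\house{\alpha}$.

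\textbf{Main obstacle.} The difficulty is the lower bound for $-v_\infty(\tau(a_m(\alpha_k)))$, i.e. the upper bound for $v_\infty(\tau(a_m(\alpha_k)))$. This is a Liouville-type estimate. The element $b_m$ is a nonzero integral element, so its norm is a nonzero polynomial and has $\deg\geq 0$. Hence
$$v_\infty(\tau(b_m))\le \sum_{\sigma\ne\tau}\bigl(-v_\infty(\sigma(b_m))\bigr)\le C' d^k,$$
using the bounds from Step 2. Translating back to $a_m(\alpha_k)=D_0^{-\delta d^k}b_m$ gives $v_\infty(\tau(a_m(\alpha_k)))\le C'' d^k$. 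Combining with Step 3 yields $\house{x_k}\le \mathfrak{c}_2 d^k$, after enlarging the constants to cover the finitely many exceptional $k$.
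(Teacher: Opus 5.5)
Your proof is correct. Its skeleton is the paper's: you evaluate a polynomial relation $\sum_j a_j(z)g^j=0$ at $\alpha^{d^k}$, take the norm of $D_0^{\delta d^k}a_m(\alpha^{d^k})$ as the denominator, and bound the conjugates of $g(\alpha^{d^k})$ from the equation. Your Step 2 is essentially the paper's denominator argument, slightly streamlined.

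The genuine difference is the key estimate on the leading coefficient. The paper asserts that $\house{1/b_m(\alpha^{d^k})}$ is bounded by a constant, via a dominant-term analysis of $b_m(\sigma(\alpha)^{d^k})$ for each conjugate. You instead obtain an upper bound on $v_\infty(\tau(a_m(\alpha^{d^k})))$ that is linear in $d^k$. It comes from the Liouville-type fact that the norm of a nonzero integral element has nonnegative $\theta$-degree, reusing the per-conjugate bounds of Step 2, and you finish with the Newton polygon root bound.

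Your route is uniform and in fact more robust. The paper's case analysis does not cover a conjugate with $v_\infty(\sigma(\alpha))=0$: there is no dominant term, and the condition it imposes on $k$ can never hold. It also fails when the leading coefficient vanishes at $0$, e.g. $g=1/z$ with relation $zg-1=0$. There $\house{1/(\alpha^{d^k})}\geq d^k v_\infty(\alpha)$ genuinely grows, so the claimed constant bound is false, even though the lemma's linear bound survives. A linear bound on the leading coefficient is exactly what your argument produces, and it is all the lemma needs.

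A few small slips should be fixed; none affects the argument.
\begin{itemize}
\item The assertion that $\K(\!(z)\!)/k(z)$ is separable fails when $\K/k$ is inseparable (e.g. $\theta^{1/p}\in\K$). You never use separability, so simply drop it.
\item In Steps 2 and 3, $\house{\alpha}$ should be replaced by $\max\{\house{\alpha},0\}$. The terms of $z$-degree $0$ contribute $0$, and $\house{\alpha}$ can be negative, e.g. for $\alpha=1/\theta$.
\item For the finitely many $k$ with $a_m(\alpha^{d^k})=0$, you need to know that $x_k$ is algebraic at all. Take $P$ primitive, i.e. with the $a_j$ coprime in $k[z]$. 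Then the minimal polynomial of $\alpha^{d^k}$ over $k$ cannot divide every $a_j$, so some $a_j(\alpha^{d^k})\neq 0$. It must occur for some $j\geq 1$, since otherwise the relation would force $a_0(\alpha^{d^k})=0$ as well. Hence $P(\alpha^{d^k},Y)$ is a nonzero polynomial vanishing at $x_k$; this is the paper's ``dividing by $z-\alpha^{d^k}$''.
\item The Weierstrass-shift remark is unnecessary.
\end{itemize}
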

\begin{proof}
Since $g$ is algebraic, there exists $b_i(z) \in \F_q[\theta][z]$ such that
$$b_m(z)g^m(z) = \sum_{i=0}^{m-1} b_i(z) g^i(z).$$
First, note that this relation implies that for all $k \in \N$  that $g(\alpha^{d^k}) \in \overline{k}$, possibly after dividing if the polynomials $b_i(z)$ are all divisible by $z-\alpha^{d^k}$.

We may consider without loss of generalty that $k$ is sufficiently large so that $b_m(\alpha^{d^k}) \neq 0$, by increasing $\mathfrak{c}_1$ and $\mathfrak{c}_2$ if necessary.

We may assume that $\house{g(\alpha^{d^k})} \geq 0$. 
Set $\delta = \max \{ \deg_z(b_i(z)), i \in [\![0,m]\!] \}$, and note that the houses of each coefficients of the $b_i$ are bounded above by a constant $\mathfrak{c}_3$ defined as the maximum of their degrees in $\theta$. 
Thus, we have:
$$m \house{g(\alpha^{d^k})} = \house{\frac{1}{b_m(\alpha^{d^k})}\sum_{i=0}^{m-1} b_i(\alpha^{d^k}) g^i(\alpha^{d^k})}$$
$$\leq \house{\frac{1}{b_m(\alpha^{d^k})}}  + \max \left\{\house{b_i(\alpha^{d^k})} + i\house{g(\alpha^{d^k})}, i\in [\![0,m-1]\!]\right\}$$

Consequently,
$$\house{g(\alpha^{d^k})} \leq \house{\frac{1}{b_m(\alpha^{d^k})}} + \mathfrak{c}_3 + \max\{\house{\alpha},0\} \delta d^k.$$

It remains to bound $\house{\frac{1}{b_m(\alpha^{d^k})}}$.
Set $b_m(z) = \sum_{j=0}^\delta \beta_j z^j$.

 $$\house{\frac{1}{b_m(\alpha^{d^k})}} = \max\left\{v_\infty\Big( \sum_{j=0}^\delta \beta_j \sigma(\alpha)^{jd^k} \Big), \sigma \in \Hom_k(k(\alpha), \C_\infty) \right\}.$$

Let $\sigma$ be the automorphism which provides an upper bound for the previous expression. 
For $k$ sufficiently large (such that $\max\{|v_\infty(\beta_i) - v_\infty(\beta_j)|, (i,j) \in [\![0,\delta]\!]^2\} < |v_\infty(\sigma(\alpha))d^k|$), we have:
$$v_\infty\Big( \sum_{j=0}^\delta \beta_j \sigma(\alpha)^{jd^k} \Big) = v_\infty(\beta_0) \text{ if } v_\infty(\sigma(\alpha)) > 0;$$
$$v_\infty\Big( \sum_{j=0}^\delta \beta_j \sigma(\alpha)^{jd^k} \Big) = v_\infty(\beta_\delta) + v_\infty(\sigma(\alpha))\delta d^k < 0 \text{ if } v_\infty(\sigma(\alpha)) \leq 0.$$
In both cases, we obtain a positive constant $\mathfrak{c}_4$ such that for all $k \geq 0$,
$$\house{\frac{1}{b_m(\alpha^{d^k})}} \leq \mathfrak{c}_4.$$
Hence, for all $k \geq 0$, the following bound holds:
$$\house{g(\alpha^{d^k})} \leq \mathfrak{c}_2 d^k.$$

We now turn to the denominators. From the algebraic relation, we deduce that $b_m(z)g(z)$ is integral over $k(z)$:
$$(b_m(z)g(z))^m = \sum_{i=0}^{m-1} b_i(z)b_m(z)^{m-1-i} (b_m(z)g(z))^i.$$
Let $D_\alpha$ denote a denominator of $\alpha$. It folows that $D_\alpha^{\delta d^k} b_m(\alpha^{d^k})g(\alpha^{d^k})$ is integral over $k(\alpha)$:
$$(D_\alpha^{\delta d^k} b_m(\alpha^{d^k})g(\alpha^{d^k}))^m = \sum_{i=0}^{m-1} D_\alpha^{(m-i)\delta d^k} b_i(\alpha^{d^k}) b_m(\alpha^{d^k})^{m-1-i} (D_\alpha^{\delta d^k} b_m(\alpha^{d^k})g(\alpha^{d^k}))^i.$$
We deduce that $D_\alpha^{\delta d^k} D\left(\frac{1}{b_m(\alpha^{d^k})}\right)$ is a denominator of $g(\alpha^{d^k})$, where $D\left(\frac{1}{b_m(\alpha^{d^k})}\right)$ denotes a denominator of $\frac{1}{b_m(\alpha^{d^k})}$. The remaining task is to choose this last denominator appropriately.




The expression $$D_\alpha^{[k(\alpha):k]\delta d^k} \Norm_{k(\alpha)/k}(b_m(\alpha^{d^k}))$$
is a denominator of $\frac{1}{b_m(\alpha^{d^k})}$. It remains to compute its degree:
$$\deg_\theta\left(\Norm_{k(\alpha)/k}(b_m(\alpha^{d^k}))\right) = [k(\alpha):k]_{ins} \sum_{\sigma \in \Hom_k(k(\alpha), \C_\infty)} -v_\infty(b_m(\sigma(\alpha)^{d^k}) $$
$$\leq [k(\alpha):k] \max\left\{ -v_\infty(b_m(\sigma(\alpha)^{d^k}), \sigma \in \Hom_k(k(\alpha), \C_\infty)\right\}$$
$$= [k(\alpha):k] \house{b_m(\alpha^{d^k})}$$
$$\leq [k(\alpha):k] (\mathfrak{c}_3 + \max\{ \house{\alpha},0\} \delta d^k).$$
The denominator thus constructed therefore has degree at most
$$[k(\alpha):k]\mathfrak{c}_3 + (\deg_\theta(D_\alpha) + \max\{\house{\alpha},0\})[k(\alpha):k]\delta d^k := \mathfrak{c}_1d^k.$$
\end{proof}

\subsection{Proof of the theorem}\text{ }

Let us now adopt the notations from \color{blue}theorem \ref{Adam_Denis2}\color{black}. Possibly after extending $\K$, we may assume that $\alpha$ lies in $\K$. We denote:\\
$l = degtr_{k}\{f_1(\alpha),...,f_n(\alpha)\}$;\\
$s = degtr_{k(z)}\{f_1(z),...,f_n(z)\}$.\\
The implication $l \leq s$ of the equality follows immediately: a relation among the functions induces, via evaluation, a relation among their values (possibly after dividing the relation by $z-\alpha$). It therefore remains to prove that $l \geq s$. Hence, we may assume that $s$ is nonzero.

Reusing the notations from \color{blue}corollary \ref{cor philippon}\color{black}, we set $$\omega = (\omega_0, \omega_1,...,\omega_n) = (1, f_1(\alpha),...,f_n(\alpha)).$$ It therefore suffices to construct a sequence $(n_N)_{N \in \N}$ and polynomials $P_{N,t}$ satisfying our assumptions. Up to reordering, we may assume that $f_1(z),....,f_{s}(z)$ are algebraicly independent over $\K(z)$. Let $\deg_X$ denote the total degree in the $X_1,...,X_{s}$, and consider $N> s!$ an integer.

Let $R_N$ be a polynomial given by \color{blue} lemma  \ref{lemme construction R_N}\color{black}. The sequence $(n_N)_{N \in \N}$ defined by this lemma satisfies the assumption of \color{blue} corollary \ref{cor philippon}\color{black}. Thus, it remains to construct the polynomials. We set:
$$E_N(z) = R_N(z,f_1(z),...,f_{s}(z)) = \sum_{j=n_N}^\infty e_{j,N}z^j.$$
The function thus defined is nonzero, since $f_1(z),....,f_{s}(z)$ are algebraicly independent over $\K(z)$. 
Up to multiply $R_N$ by a constant, we may assume $e_{n_N,N} = 1$.
We denote $R_{N,0}(z,X_0,...,X_n) \in \K[z,X_0,...X_n]$ the homogeneous polynomial of total degree $N$ in the $X_0,...X_n$ satisfying:
$$R_{N,0}(z,1,X_1,...,X_n) = R_N(z,X_1,...,X_{s}).$$

We will construct all the $R_{N,t}$ by induction on $t \in \N$. We denote by $A_i(z)$ the $i$-th row of the matric $A(z)$, $\overline{f}(z) = (f_1(z),...f_n(z))^t$, $\overline{X} = (X_1,...,X_n)^t$ and $<\cdot,\cdot>$ the usual inner product.
We can express:
$$R_{N,0}(z^d,1,f_1(z^d),...,f_n(z^d)) =  R_{N,0}(z^d,1,<A_1(z), \overline{f}(z)>,...,<A_n(z), \overline{f}(z)>).$$

For all $i \in [\![1,n]\!]$, we set $U_{0}(\overline{X}) = \overline{X}$, and we define by induction for all $k \in \N^*$, 
$$U_{k}(\overline{X}) := (U_{k,1}(\overline{X}),...,U_{k,n}(\overline{X}))^t = A(\alpha^{d^{k-1}}) U_{k-1}(\overline{X}).$$

We then set for all $t \in \N^*$
\begin{equation}
  R_{N,t}(z,X_0,...,X_n) = R_{N,0}(z^{d^t},X_0,U_{t,1}(\overline{X}),...,U_{t,n}(\overline{X})).
  \label{Fernandes recurrence}
\end{equation}

The polynomials $R_{N,t}$ thus constructed all are homogeneous and of total degree $N$ in the $X_0,X_1,...,X_n$. We may now set $c_3 = 1$.\\

By evaluating $R_{N,t}$, we obtain:
$$R_{N,t}(\alpha,1,f_1(\alpha),...,f_n(\alpha)) = E_N(\alpha^{d^t}).$$

Moreover, for all $N$, there exists an element $c_0(N)$ such that for all $t \geq c_0(N)$, $|E_N(\alpha^{d^t})|_\infty = |\alpha|_\infty^{n_Nd^t}$, therefore
$E_N(\alpha^{d^t}) \neq 0$, or, setting $c_5 = c_6 =\log|\alpha|_\infty$:
$$-c_5 d^tn_N \leq \log|E_N(\alpha^{d^t})| \leq -c_6 d^t n_N.$$
We finally obtain:
$$-c_5d^tn_N \leq \log|R_{N,t}(\alpha,\omega_0,...,\omega_n)| \leq -c_6 d^tn_N.$$

All that remains is to control the height of the polynomials.\\

\begin{remarque}
By the construction, we may write:
$$R_{N,t}(\alpha,X_0,...,X_n) = R_{N,0}(\alpha^{d^t},X_0,U_{t,1},...,U_{t,n}),$$
where the $U_{t,i}$, $i\in [\![1,n]\!]$ are linear forms in the $X_1$,...,$X_n$, whose coefficients are all product of elements of the form $\prod_{k=0}^{t-1} \beta_k$,
where $\beta_k \in S_k$ for all $k \in [\![0,t-1]\!]$.  
\end{remarque}

It follows from this remark and our assumption that all the $P_{N,t}$ lies in $\L[X_0,...,X_n]$, where $\L$ is a finite extension of $\K$.

For each $t \in \N$, let us set
$$P_{N,t}(X_0,...,X_n) = R_{N,t+c_0(N)}(\alpha,X_0,...,X_n) \in \K[X_0,...,X_n].$$
These are indeed homogeneous polynomials in $\L[X_0,...,X_n]$ of degree $N$ satisfying $$-c_5d^{t+c_0(N)} n_N \leq \log|P_{N,t}(\omega)| \leq -c_6 d^{t+c_0(N)} n_N.$$
It therefore only remain to prove the following lemma in order to apply \color{blue}corollary \ref{cor philippon} \color{black} and conclude:

\begin{lm}
By possibly increasing $c_0(N)$, we may assume that for all $t \geq 0$:
$$h(P_{N,t}) \leq c_4 d^{t+c_0(N)} N.$$    
\end{lm}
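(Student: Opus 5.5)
We bound the height of $P_{N,t}(X_0,\dots,X_n)=R_{N,0}(\alpha^{d^{t'}},X_0,U_{t',1}(\overline{X}),\dots,U_{t',n}(\overline{X}))$, with $t'=t+c_0(N)$, by controlling houses and denominators of all its coefficients and then applying Proposition \ref{inegalite hauteur maison denominateur}.

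First, the matrices. Each entry $a_{ij}(z)$ of $A(z)$ is algebraic over $k(z)$ and converges at $\alpha^{d^k}$, so Lemma \ref{lemme majoration hauteur denominateur alg} applies to each of these finitely many entries. After enlarging constants, it gives $\mathfrak{c}_1,\mathfrak{c}_2$ such that every entry of $A(\alpha^{d^k})$ has house at most $\mathfrak{c}_2 d^k$ and admits a denominator of degree at most $\mathfrak{c}_1 d^k$. Expanding $U_{t'}(\overline{X})=A(\alpha^{d^{t'-1}})\cdots A(\alpha)\overline{X}$, each coefficient of the linear form $U_{t',i}$ is a sum of products $\prod_{k=0}^{t'-1}a_{i_{k+1}i_k}(\alpha^{d^k})$. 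By the ultrametric house inequalities, its house is bounded by $\sum_{k<t'}\mathfrak{c}_2 d^k\le \mathfrak{c}_2 d^{t'}/(d-1)$. Likewise, the product of the chosen denominators $\prod_{k<t'} D_k$ (taking $D_k$ a common denominator for all entries at level $k$, of degree at most $n^2\mathfrak{c}_1 d^k$) is a denominator of every coefficient, of degree $O(d^{t'})$.

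Next, the polynomial. $R_{N,0}$ has degree at most $N$ in $z$, total degree $N$ in the $X$'s, and fixed coefficients in $\K$, whose houses and denominators depend on $N$ but not on $t$. Substituting $z=\alpha^{d^{t'}}$ contributes house at most $N\max\{0,\house{\alpha}\}d^{t'}$ and denominator $D_\alpha^{Nd^{t'}}$. Substituting the linear forms $U_{t',i}$ into a monomial of degree at most $N$ multiplies houses and denominator degrees by at most $N$. Thus every coefficient of $P_{N,t}$ has house at most $C(N)+ c\,N d^{t'}$ and a common denominator of degree at most $C'(N)+c'\,N d^{t'}$, where $c,c'$ are independent of $N$ and $t$, and $C(N),C'(N)$ come from the fixed coefficients of $R_N$. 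Proposition \ref{inegalite hauteur maison denominateur} then yields $h(P_{N,t})\le C''(N)+c''Nd^{t'}$.

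The main obstacle is absorbing the $N$-dependent constant $C''(N)$, which comes from the arbitrary coefficients of $R_N$ produced by Lemma \ref{lemme construction R_N}. This is exactly where increasing $c_0(N)$ is used. Choose $c_0(N)$ large enough that $d^{c_0(N)}N\ge C''(N)$; then $h(P_{N,t})\le (c''+1)Nd^{t+c_0(N)}$ for all $t\ge 0$, so $c_4=c''+1$ works. Enlarging $c_0(N)$ is compatible with the earlier requirement on $c_0(N)$, which only asked it to be sufficiently large for the estimate on $|E_N(\alpha^{d^t})|_\infty$.

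A secondary point is that Lemma \ref{lemme majoration hauteur denominateur alg} only holds for $k$ large. The finitely many small $k$ are handled by enlarging $\mathfrak{c}_1,\mathfrak{c}_2$, as that lemma's proof already allows, since each $A(\alpha^{d^k})$ is well defined with entries in $\K$. One should also check that the constants in the house inequalities are uniform over the conjugates used in the height; passing to the fixed finite extension $\L$ handles this, since the normalisation $1/[\L:k]$ makes the height independent of the choice of field.
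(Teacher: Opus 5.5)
Your proposal is correct and follows essentially the same route as the paper. Both arguments bound the houses and denominators of the entries of $A(\alpha^{d^k})$ via Lemma \ref{lemme majoration hauteur denominateur alg}, carry these bounds through the product defining $U_{t+c_0(N)}$ and the substitution into $R_{N,0}$, apply Proposition \ref{inegalite hauteur maison denominateur}, and absorb the $N$-dependent constant coming from the coefficients of $R_N$ by enlarging $c_0(N)$. Your condition $d^{c_0(N)}N\ge C''(N)$ is simply a more explicit form of the paper's requirement $\deg_\theta(D_R)+\max\{0,M_R\}\le c_0(N)$.
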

\begin{proof}

Let $S_k$ denote the set of all the coefficients of the matrix $A(\alpha^{d^k}).$
From \color{blue} lemma \ref{lemme majoration hauteur denominateur alg}\color{black}, there exists two real positive constants $\mathfrak{c}_1$ and $\mathfrak{c}_2$ such that for all $k \in \N$:\\ 
• there exists $D_k$ a denominator of the family $S_k$ such that $\deg_\theta(D_k) \leq \mathfrak{c}_1 d^k$;\\
• for all $\gamma\in S_k$, $\house{\gamma} \leq \mathfrak{c}_2d^k$.

From the previous remark, $\Tilde{D}_t := \prod_{k=0}^{t+c_0(N)-1} D_k$ is a denominator of the coefficients of the $U_{t+c_0(N),i}$, whose degree satisfy:
$$\deg_\theta(\Tilde{D}_t) \leq \sum_{k=0}^{t+c_0(N)-1} \mathfrak{c}_1d^k = \mathfrak{c}_1 \frac{d^{t+c_0(N)}-1}{d-1} \leq \mathfrak{c}_1 d^{t+c_0(N)}.$$
We also have for all $i\in[\![1,n]\!]$ and all coefficients $S$ in $U_{t+c_0(N),i}$ the following bound:
$$\house{S} \leq \sum_{k=0}^{t+c_0(N)-1} \mathfrak{c}_2 d^k \leq \mathfrak{c}_2 d^{t+c_0(N)}.$$

Let us now focus on $R_{N,0}$. Recall that it is a polynomial in $z$ of degree at most $N$, and also a homogeneous polynomial in $X_0,...,X_n$ of degree $N$. Being a polynomial, it has only finitely many coefficients, all belonging to $\K$. We denote $M_R$ the maximum of the houses of its coefficients, and $D_R$ a denominator of all its coefficients. Observe that these two quantitites depend on the parameter $N$, but are independent of $t$.
We denote $D_\alpha$ a denominator of $\alpha$.

The coefficients of $P_{N,t}(X_0,...,X_n)$ thus admit the polynomial 
$D_RD_\alpha^{Nd^{t+c_0(N)}}\Tilde{D}_t^N$ as a denominator, whose degree is
$$\deg_\theta\left(D_RD_\alpha^{Nd^{t+c_0(N)}}\Tilde{D}_t^N\right) \leq \deg_\theta(D_R) + (\deg_\theta(D_\alpha) + \mathfrak{c}_1)Nd^{t+c_0(N)},$$
and their house are bounded by
$$M_R + (\max\{\house{\alpha},0\} + \mathfrak{c}_2)Nd^{t+c_0(N)}.$$

From \color{blue}proposition \ref{inegalite hauteur maison denominateur}\color{black}, we thus have 
$$h(P_{N,t}) \leq \deg_\theta(D_R) + \max\{0,M_R\} + (\deg_\theta(D_\alpha) + \max\{\house{\alpha},0\} + \mathfrak{c}_1 + \mathfrak{c}_2)Nd^{t+c_0(N)}$$
After increasing $c_0(N)$ if necessary, we have $\deg_\theta(D_R) + \max\{0,M_R\} \leq c_0(N)$. We finally deduce:
$$h(P_{N,t})\leq c_4d^{t+c_0(N)}N.$$
\end{proof}

We proceed to verify all the conditions of \color{blue} corollary $\ref{cor philippon}$ \color{black}, which lets us conclude the proof.

\newpage
\bibliographystyle{plain}
\bibliography{bibliography}
\end{document}